\theoremstyle{plain}
\newtheorem{thm}{Theorem}[section]
\newtheorem{cor}[thm]{Corollary}
\newtheorem{dfn}[thm]{Definition}
\newtheorem{prop}[thm]{Proposition}
\newtheorem{rmk}[thm]{Remark}
\def\D{\mathrm{D}}
\def\G{\mathcal{G}}
\def\R{\mathscr{R}}
\def\c{\mathrm{c}}
\def\d{\mathrm{d}}
\def\n{\mathrm{n}}
\def\r{\mathrm{R}}
\def\Cset{\mathbb{C}}
\def\Nset{\mathbb{N}}
\def\Qset{\mathbb{Q}}
\def\Rset{\mathbb{R}}
\def\Zset{\mathbb{Z}}
\def\Span{\mathrm{span}}
\def\epsilon{\varepsilon}
\def\theequation{\arabic{section}.\arabic{equation}}
\begin{document}


\title[Nonintegrability of truncated normal forms]%
{Nonintegrability of truncated Poincar\'e-Dulac normal forms of resonance degree two}

\author{Kazuyuki Yagasaki}

\address{Department of Applied Mathematics and Physics, Graduate School of Informatics,
Kyoto University, Yoshida-Honmachi, Sakyo-ku, Kyoto 606-8501, JAPAN}
\email{yagasaki@amp.i.kyoto-u.ac.jp}

\date{\today}
\subjclass[2020]{37J30; 37G05; 34M35; 34C20}
\keywords{Nonintegrability; Poincar\'e-Dulac normal form; resonance degree;
 Morales-Ramis-Sim\'o theory}

\begin{abstract}
We give sufficient conditions
 for three- or four-dimensional truncated Poincar\'e-Dulac normal forms
 of resonance degree two
 to be meromorphically nonintegrable
 when the Jacobian matrices have a zero and pair of purely imaginary eigenvalues
 or two incommensurate pairs of purely imaginary eigenvalues at the equilibria.
For this purpose, we reduce their integrability
 to that of simple planar systems,
 and use an approach for proving the meromorphic nonintegrability of planar systems,
 which is similar to but more sophisticated than the previously developed one. 
Our result also implies that general three- or four-dimensional systems
 are analytically nonintegrable
 if they are formally transformed into one of the truncated normal forms
 satisfying the sufficient conditions.
\end{abstract}
\maketitle


\section{Introduction}

Consider systems of the general form
\begin{equation}
\dot{x}=f(x),\quad
x\in\Rset^n,
\label{eqn:sys}
\end{equation}
for $n\in\Nset$,
 where $f(x)$ is analytic and $x=0$ is an equilibrium, i.e., $f(0)=0$.
Let $\lambda_j$,  $j=1,\ldots,n$, be eigenvalues of the Jacobian matrix $\D f(0)$ at $x=0$.
The following two concepts are important in this situation.

\begin{dfn}[Poincar\'e-Dulac normal form]
Change the coordinates in \eqref{eqn:sys} such that
 $\D f(0)$ is in Jordan normal form.
The system \eqref{eqn:sys} is called a \emph{Poincar\'e-Dulac (PD) normal form} if $[Sx,f]=0$,
 where $S$ is the semisimple part of $\D f(0)$, i.e., $S=\mathrm{diag}\lambda_j$,
 where $\lambda_j$, $j=1,\ldots,n$, are the eigenvalues of $\D f(0)$.
\end{dfn}

Let
\[
\Zset_j^n=\{p=(p_1,\ldots,p_n)\in\Zset^n
 \mid p_j\ge -1,\,p_l\ge 0,\,l\neq j,\,p\neq 0\}
\]
for $j=1,\dots,n$.

\begin{dfn}[Resonance set and degree]
Let
\[
\R_j=\left\{p\in\Zset_j^n\,\left|\,\sum_{l=1}^n\lambda_jp_j=0\right.\right\},\quad
j=1,\ldots,n,
\]
and let
\[
\R=\bigcup_{j=1}^n\R_j.
\]
We refer to $\R$ as the \emph{resonance set} of \eqref{eqn:sys} and to
\[
\gamma_\r=\dim_\Qset\Span_\Qset\R
\]
as the \emph{resonance degree}  of \eqref{eqn:sys}. 
\end{dfn}

See \cite{B89,Z02,Y18b} for more details.
Yamanaka \cite{Y18b} proved the following. 

\begin{thm}[Yamanaka]
\label{thm:yama}
If the resonance degree $\gamma_\r$ is less than two,
 then the PD normal form is analytically integrable.
Moreover, there exists an $n$-dimensional, analytically nonintegrable PD normal form
 with $n=\gamma_\r+1$ for $\gamma_\r\ge 2$.
\end{thm}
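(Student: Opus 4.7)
The plan is to handle the two assertions separately. The integrability direction is structural and rests on the commuting symmetry $[Sx,f]=0$ together with the algebraic form of resonant monomials, while the existence of nonintegrable examples is established by an explicit construction combined with differential Galois theory.

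For the integrability assertion I would proceed by cases on $\gamma_\r$. When $\gamma_\r=0$, the set $\R$ is empty, so the PD normal form has no nonlinear resonant terms and reduces to $\dot x=Sx$; in diagonalizing coordinates this carries $n$ mutually commuting vector fields $x_j\partial_{x_j}$, which already yields integrability. When $\gamma_\r=1$, the resonance lattice is generated by a single primitive vector $q\in\Zset^n$ with $\sum_\ell q_\ell\lambda_\ell=0$, so every resonant monomial in the $j$-th equation is of the form $x_j(x^q)^k$. Setting $\phi=x^q$, a short computation yields $\dot\phi=\phi\,\Psi(\phi)$ for an analytic (or meromorphic, if $q$ has mixed signs) scalar $\Psi$, so $\phi(t)$ is obtained by a single quadrature; each remaining equation then takes the form $\dot x_j=(\lambda_j+P_j(\phi(t)))x_j$ and is integrated explicitly. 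Combined with the linear symmetry $Sx$, this produces the commuting vector fields and first integrals needed for analytic integrability.

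For the existence statement, given $\gamma_\r\ge 2$ I would build a PD normal form with $n=\gamma_\r+1$ whose eigenvalues satisfy precisely $\gamma_\r$ independent $\Zset$-linear relations. The same resonant-monomial reduction collapses the full system to a planar subsystem in two suitable resonant combinations (for instance $u=x_1x_2$, $v=x_3$ when $\lambda_1+\lambda_2=0$ and $\lambda_3=0$). I would then fix a particular solution lying on an $S$-invariant algebraic variety, derive the variational equations along it, and tune the coefficients of the nonlinear resonant terms so that the identity component of the Picard-Vessiot Galois group of the variational (or higher-order variational) equations is non-abelian. By the Morales-Ramis-Sim\'o theorem this obstructs meromorphic complete integrability, and hence analytic integrability as well.

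The hard part is the Galois-theoretic step: even with the planar reduction in hand, producing a family of examples whose variational equations have a provably non-abelian identity component of the Galois group for every $\gamma_\r\ge 2$ is delicate. One typically identifies the reduced variational equation with a standard Fuchsian equation (hypergeometric, Lam\'e, or Heun) and chooses parameters in the non-solvable regime of a Kimura-type classification. The integrability half, by contrast, is essentially bookkeeping once the resonance lattice has been described.
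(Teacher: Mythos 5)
First, a point of comparison: the paper does not prove this statement at all --- it is imported verbatim from Yamanaka \cite{Y18b} --- so there is no in-paper argument to measure your proposal against; it has to stand on its own. On that basis, your integrability half captures the right mechanism (resonant monomials in the $j$-th component are of the form $x_j(x^q)^k$ when $\gamma_\r=1$, and the scalar $\phi=x^q$ satisfies a closed equation $\dot\phi=\phi\,\Psi(\phi)$), but it stops exactly where the content of the theorem begins. Bogoyavlenskij integrability in the sense of Definition~\ref{dfn:1a} is not ``solvable by quadratures'': you must exhibit $q$ commuting \emph{analytic} vector fields and $n-q$ \emph{analytic} first integrals, linearly independent almost everywhere, on a neighbourhood of the origin. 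Integrating $\dot x_j=(\lambda_j+P_j(\phi(t)))x_j$ along trajectories produces functions of $t$, not integrals of motion; a monomial $x^a$ with $\sum_\ell a_\ell\lambda_\ell=0$ is generally \emph{not} a first integral once the resonant nonlinear terms are present, and when $q$ has mixed signs $x^q$ need not even be analytic at the origin. The standard route (and, I believe, Yamanaka's) is instead to use the linear diagonal fields $\mathrm{diag}(\sigma)x$ with $\sigma\perp\Span_\Qset\R$, which commute with any PD normal form, and to count how many independent such fields and integrals survive when $\dim_\Qset\Span_\Qset\R\le 1$; you also silently discard the nilpotent part of $\D f(0)$, which must be handled since the definition only assumes Jordan form.

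The existence half is a research plan rather than a proof. You correctly identify that one should build, for each $\gamma_\r\ge 2$, a $(\gamma_\r+1)$-dimensional PD normal form, reduce along a resonant invariant set, and obstruct integrability via Ayoul--Zung and Morales--Ramis--Sim\'o, but the entire burden of the statement is the step you explicitly defer: producing a concrete family, computing its (higher) variational equations along an explicit nonconstant solution, and verifying that the identity component of the differential Galois group is non-commutative for \emph{every} $\gamma_\r\ge 2$. Without that computation --- even for a single value of $\gamma_\r$ --- the second assertion of the theorem is not established. Note also that the theorem asserts analytic nonintegrability, so an obstruction to meromorphic integrability suffices, but one still needs the variational equation to land in a tractable class (your ``Kimura-type'' appeal is plausible but unverified). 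As written, the proposal proves neither half completely.
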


Here we adopt the following concept of integrability for such general systems as \eqref{eqn:sys}
 in the Bogoyavlenskij sense \cite{B98}.

\begin{dfn}[Bogoyavlenskij]
\label{dfn:1a}
Let $q$ be an integer such that $0\le q\le n$.
The $n$-dimensional system \eqref{eqn:sys}
 is called \emph{$(q,n-q)$-integrable} or simply \emph{integrable} 
 if there exist $q$ vector fields $f_1(x)(:= f(x)),f_2(x),\dots,f_q(x)$
 and $n-q$ scalar-valued functions $F_1(x),\dots,F_{n-q}(x)$ such that
 the following two conditions hold:
\begin{enumerate}
\setlength{\leftskip}{-1.8em}
\item[\rm(i)]
$f_1(x),\dots,f_q(x)$ are linearly independent almost everywhere
 and commute with each other,
 i.e., $[f_j,f_k](x):=\D f_k(x)f_j(x)-\D f_j(x)f_k(x)\equiv 0$ for $j,k=1,\ldots,q$,
 where $[\cdot,\cdot]$ denotes the Lie bracket$\hspace{0.05em};$
\item[\rm(ii)]
The derivatives $\D F_1(x),\dots, \D F_{n-q}(x)$ are linearly independent almost everywhere
 and $F_1(x),\dots,F_{n-q}(x)$ are first integrals of $f_1, \dots,f_q$,
 i.e., $\D F_k(x)\cdot f_j(x)\equiv 0$ for $j=1,\ldots,q$ and $k=1,\ldots,n-q$,
 where ``$\cdot$'' represents the inner product.
\end{enumerate}
We say that the system is \emph{meromorphically} $($resp. \emph{analytically}$)$
 \emph{integrable}
 if the first integrals and commutative vector fields are meromorphic $($resp. analytic$)$. 
\end{dfn}

Definition~\ref{dfn:1a} is considered as a generalization of 
 Liouville-integrability for Hamiltonian systems \cite{A89,M99}
 since an $m$-degree-of-freedom Liouville-integrable Hamiltonian system with $m\ge 1$
 has not only $m$ functionally independent first integrals
 but also $m$ linearly independent commutative (Hamiltonian) vector fields
 generated by the first integrals.
Results similar to Theorem~\ref{thm:yama}
  for Hamiltonian systems are found in \cite{C12,D84,Y19,Z05}.

Moreover, Zung \cite{Z02} proved the following.

\begin{thm}[Zung]
\label{thm:zung}
If the system \eqref{eqn:sys} is analytically integrable in the Bogoyavlenskij sense,
 then there exists an analytic change of coordinates
 under which it is transformed to a PD normal form.
\end{thm}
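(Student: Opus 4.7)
The plan is to reduce the analytic normalization to an equivariant linearization of a torus action built out of the integrability data. The starting point is the classical fact that a formal Poincar\'e-Dulac normalization always exists: iteratively killing non-resonant monomials degree by degree produces a formal tangent-to-identity coordinate change $\hat\phi$ such that $\hat f:=\hat\phi^*f$ satisfies $[Sx,\hat f]=0$. The substantive content of the theorem is thus that, under the analytic integrability hypothesis, some such $\hat\phi$ actually converges on a neighborhood of the origin.

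The conceptual core is to pass from the infinitesimal generator $Sx$ to a compact torus. Since $\hat f$ commutes with the linear flow $e^{tS}$, the closure of $\{e^{tS}\}_{t\in\Rset}$ in the unitary group of the complexified tangent space at $0$ is a compact torus $\mathbb{T}^r$, where $r$ is the rank of the additive subgroup of $\Rset$ generated by the imaginary parts of $\lambda_1,\ldots,\lambda_n$. Formally, this $\mathbb{T}^r$ acts on a neighborhood of the origin and preserves $\hat f$; if the action were known to be \emph{analytic}, then averaging $\hat\phi$ over $\mathbb{T}^r$ would produce an analytic normalizing transformation, and Bochner's theorem on linearization of compact group actions fixing a point would give an analytic coordinate change putting $\mathbb{T}^r$ into its linear form, which is precisely the condition $[Sx,f]=0$.

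I would therefore devote the bulk of the argument to producing this torus action analytically. Concretely, the goal is to exhibit $r$ pairwise commuting analytic vector fields $Y_1,\ldots,Y_r$, each commuting with $f$, whose semisimple linear parts at $0$ span the Lie algebra of the desired torus. The $Y_k$ are to be assembled out of the data $f_1,\ldots,f_q$ and $F_1,\ldots,F_{n-q}$ by operations that preserve analyticity: using the commuting analytic vector fields and the analytic first integrals one constructs candidate ``angular'' generators, takes their formal normal-form projections onto the resonant part, and checks that these formal projections arise from convergent series because every input was convergent and no small-divisor inversion is involved. One then verifies that the $Y_k$ are formally conjugate to the standard toral vector fields on $\Cset^n$ and invokes an equivariant (Bochner-type) linearization theorem for real-analytic compact group actions to realize the conjugation analytically.

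The hard part is this third step: producing the commuting \emph{analytic} generators of the torus. Absent integrability one would be forced into small-divisor estimates and Bruno-type arithmetic conditions on the $\lambda_j$; the integrability hypothesis is used precisely to supply, through the commuting fields and first integrals, the structural ingredients from which the torus generators can be built algebraically without any arithmetic input. This is also the step that makes the theorem genuinely deeper than classical Poincar\'e-Siegel normalization and explains why it holds in full generality, independent of the eigenvalue arithmetic.
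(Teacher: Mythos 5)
The paper itself gives no proof of this statement: it is quoted from Zung and supported only by the citation [Z02], so the only meaningful review is of your argument on its own terms. Your outline does identify the architecture of Zung's actual proof --- formal Poincar\'e--Dulac normalization, an associated analytic torus action, and Bochner's linearization theorem --- but the step that carries the entire weight of the theorem is missing. You assert that the formal torus generators $Y_k$ ``arise from convergent series because every input was convergent and no small-divisor inversion is involved.'' This begs the question. The $Y_k$ are read off from the formal normalization $\hat\phi$, which is built precisely by dividing by the non-resonant quantities $\langle p,\lambda\rangle-\lambda_j$; these can be arbitrarily small, $\hat\phi$ is in general divergent, and a priori so are the $Y_k$. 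The entire content of Zung's theorem is the proof that, in the presence of $q$ analytic commuting vector fields and $n-q$ analytic first integrals, the torus generators nevertheless converge; this requires genuine quantitative estimates on their Taylor coefficients in terms of the integrability data and occupies most of [Z02]. It does not follow from the remark that the inputs are analytic. Without that argument you have only re-derived the classical formal normal form.

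Two further inaccuracies. First, averaging the formal series $\hat\phi$ over the torus cannot ``produce an analytic normalizing transformation'': averaging does not convert a divergent series into a convergent one. The correct mechanism is the other one you mention, Bochner's theorem applied to the action once it is known to be analytic (one averages the composition of the action with the inverse of its linear part, which is analytic by hypothesis). Second, the relevant torus is not the closure of $\{e^{tS}\}_{t\in\Rset}$ in a unitary group; when some $\lambda_j$ has nonzero real part that closure is noncompact, and even in the purely real nonresonant case Zung's torus is nontrivial. Its dimension is the toric degree, determined by the $\Qset$-linear resonance relations among all the eigenvalues (essentially $n-\gamma_\r$ in the notation of this paper), and it coincides with your description only when all eigenvalues are purely imaginary, as in cases (I) and (II); the theorem, however, is stated for general $f$.
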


A similar result for Hamiltonian systems was also obtained in \cite{Z05}.

In this paper, we study \eqref{eqn:sys} with $n=3$ or $4$
 for which $\D f(0)$ of $f(x)$ at $x=0$ has
 (I) a zero and a pair of purely imaginary eigenvalues, $\lambda=0,\pm i\omega$ ($\omega>0$),
 for $n=3$
 or (II) two pairs of  purely imaginary eigenvalues, $\lambda=\pm i\omega_j$ ($\omega_j>0$), $j=1,2$,
 with $\omega_1/\omega_2\not\in\Qset$ for $n=4$.
Then by polynomial changes of coordinates,
 the system \eqref{eqn:sys} is transformed to
\begin{equation}
\begin{split}
\dot{x}_1=&-\omega x_2+\alpha_1x_1x_3-\alpha_2 x_2x_3\\
& +(\beta_1(x_1^2+x_2^2)+\beta_2x_3^2)x_1-(\beta_4(x_1^2+x_2^2)+\beta_5x_3^2)x_2,\\
\dot{x}_2=&\omega x_1+\alpha_2 x_1x_3+\alpha_1 x_2x_3\\
& +(\beta_4(x_1^2+x_2^2)+\beta_5x_3^2)x_1+(\beta_1(x_1^2+x_2^2)+\beta_2x_3^2)x_2,\\
\dot{x}_3=& \alpha_3(x_1^2+x_2^2)+\alpha_4x_3^2+(\beta_5(x_1^2+x_2^2)+\beta_6 x_3)x_3^2
\end{split}
\label{eqn:PD3}
\end{equation}
with $x=(x_1,x_2,x_3)$ up to $O(|x|^3)$ for case (I), and to
\begin{equation}
\begin{split}
\dot{x}_1
=& -\omega_1 x_2+(\alpha_1(x_1^2+x_2^2)+\alpha_2(x_3^2+x_4^2))x_1
 -\tilde{\alpha}_1(x_1^2+x_2^2)+\tilde{\alpha}_2(x_3^2+x_4^2))x_2\\
& +(\beta_1(x_1^2+x_2^2)^2+\beta_2(x_3^2+x_4^2)^2
 +\beta_3(x_1^2+x_2^2)(x_3^2+x_4^2))x_1\\
& -(\tilde{\beta}_1(x_1^2+x_2^2)^2+\tilde{\beta}_2(x_3^2+x_4^2)^2
 +\tilde{\beta}_3(x_1^2+x_2^2)(x_3^2+x_4^2))x_2,\\
\dot{x}_2
=& \omega_1 x_1+(\tilde{\alpha}_1(x_1^2+x_2^2)+\tilde{\alpha}_2(x_3^2+x_4^2))x_1
 +(\alpha_1(x_1^2+x_2^2)+\alpha_2(x_3^2+x_4^2))x_2\\
& +(\tilde{\beta}_1(x_1^2+x_2^2)^2+\tilde{\beta}_2(x_3^2+x_4^2)^2
 +\tilde{\beta}_3(x_1^2+x_2^2)(x_3^2+x_4^2))x_1\\
& +(\beta_1(x_1^2+x_2^2)^2+\beta_2(x_3^2+x_4^2)^2
 +\beta_3(x_1^2+x_2^2)(x_3^2+x_4^2))x_2,\\
\dot{x}_3
=& -\omega_2 x_4+(\alpha_3(x_1^2+x_2^2)+\alpha_4(x_3^2+x_4^2)^2)x_3
 -(\tilde{\alpha}_3(x_1^2+x_2^2)+\tilde{\alpha}_4(x_3^2+x_4^2))x_4\\
& +(\beta_4(x_1^2+x_2^2)^2+\beta_5(x_3^2+x_4^2)^2
 +\beta_6(x_1^2+x_2^2)(x_3^2+x_4^2))x_3\\
& -(\tilde{\beta}_4(x_1^2+x_2^2)^2+\tilde{\beta}_5(x_3^2+x_4^2)^2
 +\tilde{\beta}_6(x_1^2+x_2^2)(x_3^2+x_4^2))x_4,\\
\dot{x}_4
=& \omega_2 x_3+(\tilde{\alpha}_3(x_1^2+x_2^2)+\tilde{\alpha}_4(x_3^2+x_4^2))x_3
 +(\alpha_3(x_1^2+x_2^2)+\alpha_4(x_3^2+x_4^2))x_4\\
& +(\tilde{\beta}_4(x_1^2+x_2^2)^2+\tilde{\beta}_5(x_3^2+x_4^2)^2
 +\tilde{\beta}_6(x_1^2+x_2^2)(x_3^2+x_4^2))x_3\\
& +(\beta_4(x_1^2+x_2^2)^2+\beta_5(x_3^2+x_4^2)^2
 +\beta_6(x_1^2+x_2^2)(x_3^2+x_4^2))x_4
\end{split}
\label{eqn:PD4}
\end{equation}
with $x=(x_1,x_2,x_3,x_4)$ up to $O(|x|^5)$ for case (II),
 where $\alpha_j,\beta_j,\tilde{\alpha}_j,\tilde{\beta}_j\in\Rset$, $j=1,\ldots,4$ or $1,\ldots,6$.
See, e.g., Section~3.1 of \cite{HI11} for the derivations of \eqref{eqn:PD3} and \eqref{eqn:PD4}.
We easily see that the systems \eqref{eqn:PD3} and \eqref{eqn:PD4}
 can be written as the $O(|x|^3)$- and $O(|x|^5)$-truncations
 of the PD normal forms in cases (I) and (II), respectively.
Actually, Eqs.~\eqref{eqn:PD3} and \eqref{eqn:PD4} become
\begin{equation}
\begin{split}
&
\dot{z}_1=i\omega z_1+(\alpha_1x_3+\beta_1z_1z_2+\beta_2x_3^2
 +i(\alpha_2x_3+\beta_4z_1z_2+\beta_5x_3^2))z_1,\\
&
\dot{z}_2=-i\omega z_2+(\alpha_1x_3+\beta_1z_1z_2+\beta_2x_3^2
 -i(\alpha_2x_3+\beta_4z_1z_2+\beta_5x_3^2))z_2,\\
&
\dot{x}_3=\alpha_3z_1z_2+\alpha_4x_3^2+(\beta_5z_1z_2+\beta_6x_3^2)x_3
\end{split}
\label{eqn:cpd1}
\end{equation}
and
\begin{equation}
\begin{split}
\dot{z}_1=&i\omega_1 z_1
 +((\alpha_1+i\tilde{\alpha}_1)z_1z_2+(\alpha_2+i\tilde{\alpha}_2)z_3z_4\\
&
+(\beta_1+i\tilde{\beta}_1)z_1^2z_2^2+(\beta_2+i\tilde{\beta}_2)z_3^2z_4^2
 +(\beta_3+i\tilde{\beta}_3)z_1z_2z_3z_4 )z_1,\\
\dot{z}_2=&-i\omega_1 z_2
 +((\alpha_1-i\tilde{\alpha}_1)z_1z_2+(\alpha_2-i\tilde{\alpha}_2)z_3z_4\\
&
+(\beta_1-i\tilde{\beta}_1)z_1^2z_2^2+(\beta_2-i\tilde{\beta}_2)z_3^2z_4^2
 +(\beta_3-i\tilde{\beta}_3)z_1z_2z_3z_4)z_2,\\
\dot{z}_3=&i\omega_2 z_3
 +((\alpha_3+i\tilde{\alpha}_3)z_1z_2+(\alpha_4+i\tilde{\alpha}_4)z_3z_4\\
&
+(\beta_4+i\tilde{\beta}_4)z_1^2z_2^2+(\beta_5+i\tilde{\beta}_5)z_3^2z_4^2
 +(\beta_6+i\tilde{\beta}_6)z_1z_2z_3z_4)z_3,\\
\dot{z}_4=&-i\omega_2 z_4
 +((\alpha_3-i\tilde{\alpha}_3)z_1z_2+(\alpha_4-i\tilde{\alpha}_4)z_3z_4\\
&
+(\beta_4-i\tilde{\beta}_4)z_1^2z_2^2+(\beta_5-i\tilde{\beta}_5)z_3^2z_4^2
 +(\beta_6-i\tilde{\beta}_6)z_1z_2z_3z_4)z_4,
\end{split}
\label{eqn:cpd2}
\end{equation}
respectively, where
\begin{align*}
z_1=x_1+ix_2,\quad
z_2=x_1-ix_2,\quad
z_3=x_3+ix_4,\quad
z_4=x_3-ix_4.
\end{align*}
We easily see that the systems \eqref{eqn:cpd1} and \eqref{eqn:cpd2} are PD normal forms.
Moreover, the resonance sets are given by
\[
\R=\Span_\Nset\{(1,0,0),(0,1,1)\}\quad\text{and}\quad
\R=\Span_\Nset\{(1,1,0,0),(0,0,1,1)\},
\]
respectively, and the resonance degrees are $\gamma_\r=2$.
Hence, the systems \eqref{eqn:PD3} and \eqref{eqn:PD4} may be analytically nonintegrable
 near the origin $x=0$ even in reference to Theorem~\ref{thm:yama}.
Actually, the following were recently proved in \cite{Y22}.

\begin{thm}
\label{thm:Y1}
Let $n=3$ and suppose that the system~\eqref{eqn:sys} is transformed to
\begin{equation}
\begin{split}
\dot{x}_1=&-\omega x_2+\alpha_1x_1x_3-\alpha_2 x_2x_3,\\
\dot{x}_2=&\omega x_1+\alpha_2 x_1x_3+\alpha_1 x_2x_3,\\
\dot{x}_3=& \alpha_3(x_1^2+x_2^2)+\alpha_4x_3^2
\end{split}
\label{eqn:PD3t}
\end{equation}
up to $O(|x|^2)$.
If one of the following conditions holds,
 then the system~\eqref{eqn:sys}
 is not real-analytically integrable in the Bogoyavlenskij sense near the origin$\,:$
\begin{enumerate}
\setlength{\leftskip}{-1.8em}
\item[\rm(i)]
$\alpha_1\alpha_4>0\,;$
\item[\rm(ii)]
$\alpha_1\alpha_4<0$ and $\alpha_4/\alpha_1\not\in\Qset$.
\end{enumerate}
\end{thm}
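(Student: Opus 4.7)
The plan is to exploit the rotational symmetry of~\eqref{eqn:PD3t} to reduce the three-dimensional integrability question to a planar one, and then to rule out analytic first integrals of the planar system by an explicit power-series argument. The system~\eqref{eqn:PD3t} commutes with the rotation vector field $R := x_1\partial_{x_2} - x_2\partial_{x_1}$, which generates an $S^1$-action fixing $x_3$; in the $S^1$-invariants $\rho = x_1^2 + x_2^2$ and $x_3$, the dynamics descends to the planar reduced system
\[
\dot\rho = 2\alpha_1\rho x_3, \qquad \dot x_3 = \alpha_3\rho + \alpha_4 x_3^2.
\]

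The first step is to show that real-analytic Bogoyavlenskij integrability of~\eqref{eqn:sys} near the origin forces the reduced system to admit a nonconstant analytic first integral near $(\rho, x_3) = (0, 0)$. I would expand any analytic first integral $F$ of~\eqref{eqn:PD3t} in Fourier modes in the angular variable $\theta$ (with $x_1 + ix_2 = re^{i\theta}$), using that real-analyticity in $(x_1, x_2, x_3)$ forces the $k$th mode to take the form $r^{|k|} G_k(\rho, x_3)$ with $G_k$ analytic. Because $\dot\theta = \omega + \alpha_2 x_3$ with $\omega \neq 0$, a term-by-term recurrence on the coefficients of $G_k$ forces $G_k \equiv 0$ for $k \neq 0$, so every analytic first integral of~\eqref{eqn:PD3t} is $S^1$-invariant and descends to a first integral of the reduced system. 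For the integrability types $(q, n-q) = (1, 2)$ and $(2, 1)$ of Definition~\ref{dfn:1a} this yields a nonconstant planar first integral directly; the $(3, 0)$ case reduces to $(2, 1)$ because the integrable distribution spanned by the two additional commuting fields $f_2, f_3$ admits a local transverse first integral $h$ which, by commutativity, is automatically a first integral of $f_1 = f$.

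The second step analyzes formal first integrals $F = \sum_{i, j \ge 0} a_{ij}\rho^i x_3^j$ of the reduced planar system. The condition $\dot F = 0$ yields the two-term recurrence
\[
(2\alpha_1 i + j\alpha_4)\,a_{ij} + (j+2)\,\alpha_3\,a_{i-1, j+2} = 0, \qquad a_{-1, \cdot} := 0,
\]
which links the $a_{ij}$ along each anti-diagonal $j + 2i = \mathrm{const}$. The boundary condition at $i = 0$ together with $\alpha_4 \neq 0$ forces $a_{0, j} = 0$ for every $j > 0$ on each anti-diagonal, and stepping back along it forces every $a_{ij}$ with $(i, j) \neq (0, 0)$ to vanish unless $2\alpha_1 i + j\alpha_4 = 0$ for some positive integers $i, j$, i.e., unless $\alpha_4/\alpha_1$ is a \emph{negative rational}. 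Under condition~(i) the ratio $\alpha_4/\alpha_1$ is positive and under condition~(ii) it is irrational, so in both cases no such resonance occurs and only the trivial constant first integral survives. This contradicts the conclusion of the first step, proving the theorem.

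The main obstacle is the Fourier analysis in the first step: one must verify carefully that the analyticity ansatz $r^{|k|} G_k(\rho, x_3)$ together with $\omega \neq 0$ really excludes nontrivial $G_k$ for $k \neq 0$, and in the $(3, 0)$ case one must construct the transverse first integral $h$ despite the degeneracy of $(f_2, f_3)$ at the equilibrium. Once these reductions are secured, the power-series step is elementary, and the dichotomy $\alpha_4/\alpha_1 \notin \Qset_{<0}$ that emerges from the recurrence matches hypotheses~(i) and~(ii) precisely.
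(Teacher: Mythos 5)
Your overall strategy---descend to the planar system in $(\rho,x_3)$ via the $S^1$-symmetry and kill first integrals by a two-term recurrence along the quasi-homogeneous anti-diagonals $2i+j=\mathrm{const}$---is close in spirit to the machinery this paper uses for such systems (the reduction of Corollary~\ref{cor:PD3} and the analysis of \eqref{eqn:PD3p0}; the theorem itself is quoted from \cite{Y22} rather than proved here). Your recurrence is computed correctly and does isolate exactly the resonance $2\alpha_1 i+\alpha_4 j=0$ with $i\ge1$, $j\ge1$, i.e.\ $\alpha_4/\alpha_1\in\Qset_{<0}$, whose exclusion is precisely what hypotheses (i) and (ii) provide. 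Nevertheless there are two genuine gaps.

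First, the theorem concerns \eqref{eqn:sys}, which coincides with \eqref{eqn:PD3t} only through quadratic order; the cubic and higher terms are arbitrary analytic and are \emph{not} in normal form. The rotational symmetry $x_1\partial_{x_2}-x_2\partial_{x_1}$, the descent to the planar reduced system, and the decoupling of the Fourier modes of a first integral (which relies on the vector field preserving the grading by $p-q$ in $z^p\bar z^q x_3^j$, $z=x_1+ix_2$) all hold only for the exact polynomial system \eqref{eqn:PD3t}; a first integral of \eqref{eqn:sys} is not a first integral of \eqref{eqn:PD3t}, so your first step never gets started on the object the theorem is about. The missing ingredient is Theorem~\ref{thm:zung}: if \eqref{eqn:sys} were analytically integrable it would be analytically conjugate to a \emph{full} PD normal form, which is exactly $S^1$-equivariant and whose quadratic part is still \eqref{eqn:PD3t} (one must also check that conditions (i),(ii) are invariant under the residual linear changes commuting with $S$, which they are since $\alpha_4/\alpha_1$ is invariant and $\alpha_1\alpha_4$ changes only by a positive factor). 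Your mode argument and the recurrence, applied to the lowest quasi-homogeneous part of the reduced normal form, then do finish the first-integral half of the proof.

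Second, the $(3,0)$ case is handled incorrectly. From $[f_1,f_2]=[f_1,f_3]=0$ one only gets that the flow of $f_1$ permutes the leaves of the distribution spanned by $f_2,f_3$; a transverse first integral $h$ of that distribution need not be a first integral of $f_1$. For instance $f_1=x\partial_x$, $f_2=\partial_y$, $f_3=\partial_z$ commute and are independent almost everywhere, the distribution is $\ker dx$ with $h=x$, and $f_1h=x\not\equiv0$. So in the $(3,0)$ case your argument produces no nonconstant planar first integral to contradict, and one needs a tool that treats commuting vector fields on the same footing as first integrals---either Zung's theorem again (whose hypothesis is full Bogoyavlenskij integrability, including the $(3,0)$ case) or the Galoisian obstruction of Ayoul--Zung underlying Theorem~\ref{thm:a}. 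Until both of these points are repaired, the proposal proves only that the exact truncated system \eqref{eqn:PD3t} admits no nonconstant analytic $S^1$-invariant first integral, which is strictly weaker than the stated theorem.
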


\begin{thm}
\label{thm:Y2}
Let $n=4$ and suppose that the system~\eqref{eqn:sys} is transformed to
\begin{equation}
\begin{split}
\dot{x}_1
=& -\omega_1 x_2+(\alpha_1(x_1^2+x_2^2)+\alpha_2(x_3^2+x_4^2))x_1
 -(\tilde{\alpha}_1(x_1^2+x_2^2)+\tilde{\alpha}_2(x_3^2+x_4^2))x_2,\\
\dot{x}_2
=& \omega_1 x_1+(\tilde{\alpha}_1(x_1^2+x_2^2)+\tilde{\alpha}_2(x_3^2+x_4^2))x_1
 +(\alpha_1(x_1^2+x_2^2)+\alpha_2(x_3^2+x_4^2))x_2,\\
\dot{x}_3
=& -\omega_2 x_4+(\alpha_3(x_1^2+x_2^2)+\alpha_4(x_3^2+x_4^2))x_3
 -(\tilde{\alpha}_3(x_1^2+x_2^2)+\tilde{\alpha}_4(x_3^2+x_4^2))x_4,\\
\dot{x}_4
=& \omega_2 x_3+(\tilde{\alpha}_3(x_1^2+x_2^2)+\tilde{\alpha}_4(x_3^2+x_4^2))x_3
 +(\alpha_3(x_1^2+x_2^2)+\alpha_4(x_3^2+x_4^2))x_4
\end{split}
\label{eqn:PD4t}
\end{equation}
up to $O(|x|^3)$.
If $\alpha_1\neq\alpha_3$, $\alpha_2\neq\alpha_4$, and one of the following conditions holds,
 then the system~\eqref{eqn:sys} is not real-analytically integrable
 in the Bogoyavlenskij sense near the origin$\,:$
\begin{enumerate}
\setlength{\leftskip}{-1.6em}
\item[\rm(i)]
$\alpha_1\alpha_3$ or $\alpha_2\alpha_4>0\,;$
\item[\rm(ii)]
$\alpha_1\alpha_3,\alpha_2\alpha_4<0$
 and $\alpha_1/\alpha_3,\alpha_2/\alpha_4\not\in\Qset$.
\end{enumerate}
\end{thm}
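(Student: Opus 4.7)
\medskip\noindent
\textbf{Proof plan.}
The strategy is to exploit the $T^2$-symmetry of the truncated normal form \eqref{eqn:PD4t} to reduce the integrability question to one about a planar quadratic system, and then to prove nonintegrability of that planar system by a monodromy/differential-Galois argument in the spirit of the proof of Theorem~\ref{thm:Y1}.

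First I would introduce the amplitudes $u=x_1^2+x_2^2$, $v=x_3^2+x_4^2$ and phases $\theta_j=\arg(x_{2j-1}+ix_{2j})$, $j=1,2$. A direct computation shows that \eqref{eqn:PD4t} decomposes as the closed planar subsystem
\begin{equation*}
\dot u=2u(\alpha_1 u+\alpha_2 v),\qquad \dot v=2v(\alpha_3 u+\alpha_4 v),
\end{equation*}
together with the phase drift equations $\dot\theta_j=\omega_j+\tilde\alpha_{2j-1}u+\tilde\alpha_{2j}v$. Since the rotations $X_j=-x_{2j}\partial_{x_{2j-1}}+x_{2j-1}\partial_{x_{2j}}$ commute with the right-hand side $f$ of \eqref{eqn:PD4t}, any hypothetical meromorphic first integrals and commuting vector fields of $f$ can be averaged over the $T^2$-action. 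The averaged objects are torus-invariant, and averaged first integrals are in particular functions of $(u,v)$ that are constant along trajectories of the planar subsystem. A dimension count in the Bogoyavlenskij setting then shows that meromorphic integrability of \eqref{eqn:PD4t} forces the planar $(u,v)$-system to admit a nonconstant meromorphic first integral.

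Second I would prove nonintegrability of the planar system under the hypotheses. It has the invariant lines $u=0$, $v=0$, and --- thanks to $\alpha_1\neq\alpha_3$ and $\alpha_2\neq\alpha_4$ --- the additional line $L:=(\alpha_3-\alpha_1)u+(\alpha_4-\alpha_2)v=0$, each with linear cofactor. Substituting $w=v/u$ converts the system into a separable first-order ODE whose implicit integral has the Darboux form $H(u,v)=u^{p_1}v^{p_2}L^{p_3}$, with $(p_1,p_2,p_3)$ determined up to scale by a linear system in $\alpha_1,\dots,\alpha_4$. For $H$ (or a power thereof) to be meromorphic, the triple $(p_1,p_2,p_3)$ must be rationally proportional. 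Under case~(i) the explicit formulas for $p_j$ force sign and numerical relations among the $\alpha_j$ that fail; excluding non-Darboux Liouvillian integrals then follows from Singer's theorem applied to the $(u,v)$-system, and the argument essentially reduces to that for Theorem~\ref{thm:Y1} carried out on one oscillator pair.

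The \textbf{main obstacle} is case~(ii), where $\alpha_1\alpha_3<0$, $\alpha_2\alpha_4<0$ and both $\alpha_1/\alpha_3,\alpha_2/\alpha_4\notin\Qset$. Here no single Darboux-exponent computation precludes a meromorphic first integral, and the one-oscillator arguments behind Theorem~\ref{thm:Y1} do not suffice on their own. I would instead analyse the variational equations of the planar system along the two axis trajectories in $\{v=0\}$ and $\{u=0\}$: each linearized equation is Fuchsian on the Riemann sphere with characteristic exponent $\alpha_3/\alpha_1$ and $\alpha_2/\alpha_4$, respectively, and the irrationality of these ratios makes each monodromy generator of infinite order. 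A meromorphic first integral of the planar system would then need to be compatible with both monodromies simultaneously, forcing simultaneous rationality of $\alpha_1/\alpha_3$ and $\alpha_2/\alpha_4$ --- contradicting the hypothesis. Pulling this obstruction back through the symmetry reduction yields the claimed nonintegrability of \eqref{eqn:PD4t}. The "more sophisticated" aspect alluded to in the abstract is precisely this need to combine monodromy information from both oscillator pairs rather than use either alone.
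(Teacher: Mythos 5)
First, a point of orientation: the paper does not actually prove Theorem~\ref{thm:Y2}; it is quoted from \cite{Y22} and used as background, so there is no in-paper proof to compare against step by step. The closest in-paper material is the reduction machinery of Section~2 (Proposition~\ref{prop:2a} and Corollaries~\ref{cor:PD3}, \ref{cor:PD4}), which is a rigorous version of your first step: integrability of the $T^2$-equivariant system passes to the planar amplitude system $\dot u=2u(\alpha_1u+\alpha_2v)$, $\dot v=2v(\alpha_3u+\alpha_4v)$. Note, however, that the paper substitutes a solution of the phase equation rather than averaging; averaging a meromorphic first integral over the torus may produce a constant, and the commuting vector fields required by Definition~\ref{dfn:1a} (including the possibility of $(4,0)$-integrability, with no first integrals at all) must also be reduced, so your ``dimension count'' needs the care of Proposition~\ref{prop:2a} and must in the end exclude both a nonconstant first integral and an independent commuting field for the planar system.

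There are two genuine gaps. First, the theorem asserts real-analytic nonintegrability of the \emph{original} system~\eqref{eqn:sys} near the origin, of which \eqref{eqn:PD4t} is only the $3$-jet; your entire argument concerns the truncated normal form, which is the only object carrying the $T^2$-symmetry you exploit. Passing from nonintegrability of the truncation (or of its planar reduction) to nonintegrability of the full analytic system, and from ``meromorphic nonintegrability near a complex phase curve'' (which is what monodromy/differential Galois arguments deliver) to ``real-analytic nonintegrability in a neighborhood of the equilibrium,'' is precisely the hard content of \cite{Y22} and is absent from the proposal. Second, the mechanism you propose for the planar system does not reproduce the stated conditions. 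In case~(ii) your simultaneous-monodromy argument would conclude nonintegrability as soon as \emph{one} of $\alpha_3/\alpha_1$, $\alpha_2/\alpha_4$ is irrational, whereas the theorem demands both irrational together with $\alpha_1\alpha_3,\alpha_2\alpha_4<0$; this mismatch signals that the obstruction from a single axis is not sufficient in the saddle configuration. In case~(i) no irrationality is assumed at all --- $\alpha_1/\alpha_3$ may be a positive rational, so the Darboux integral $u^{\alpha_4(\alpha_1-\alpha_3)}v^{\alpha_1(\alpha_4-\alpha_2)}L^{\alpha_2\alpha_3-\alpha_1\alpha_4}$ may be perfectly meromorphic and no exponent or monodromy obstruction exists. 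Condition~(i) instead reflects a real-dynamical phenomenon: when $\alpha_1\alpha_3>0$ an open sector of orbits near the invariant ray $\{v=0,\,u>0\}$ is asymptotic to the origin, forcing any continuous first integral to be constant there and hence, by analyticity, everywhere. This is invisible to the complex variational framework you invoke, and Singer's theorem (about Liouvillian first integrals of planar fields) addresses neither Bogoyavlenskij integrability nor the real-analytic-near-the-origin formulation.
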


From Theorem~\ref{thm:zung} we see that
 when the hypotheses of Theorems~\ref{thm:Y1} and \ref{thm:Y2} hold respectively,
 the truncated systems \eqref{eqn:PD3} and \eqref{eqn:PD4} are analytically nonintegrable,
 and the system \eqref{eqn:sys} is analytically nonintegrable
 if it is formally transformed into one of them.
So we are interested in the following questions: 
\begin{itemize}
\setlength{\leftskip}{-2.8em}
\item
Can the truncated systems \eqref{eqn:PD3} and \eqref{eqn:PD4} be nonintegrable
 even when the hypotheses of Theorem~\ref{thm:Y1} or \ref{thm:Y2} are not satisfied?
\item
If so, under what conditions are they nonintegrable?
\end{itemize}
Here we give sufficient conditions for the nonintegrability
 of the truncated systems \eqref{eqn:PD3} and \eqref{eqn:PD4}
 and answers to the above questions.
Our results are also important in the following context.

The unfoldings of the $O(|x|^2)$- and $O(|x|^3)$-truncated systems
 for \eqref{eqn:PD3} and \eqref{eqn:PD4},
\begin{equation}
\begin{split}
&\dot{x}_1=\nu x_1-\omega x_2+\alpha_1 x_1x_3-\alpha_2 x_2x_3,\\
&\dot{x}_2=\omega x_1+\nu x_2+\alpha_2 x_1x_3+\alpha_1 x_2x_3,\\
&\dot{x}_3=\mu+\alpha_3(x_1^2+x_2^2)+\alpha_4x_3^2
\end{split}
\label{eqn:fH}
\end{equation}
and
\begin{equation}
\begin{split}
\dot{x}_1
=& -\omega_1 x_2+(\nu+\alpha_1(x_1^2+x_2^2)+\alpha_2(x_3^2+x_4^2))x_1,\\
\dot{x}_2
=& \omega_1 x_1+(\nu+\alpha_1(x_1^2+x_2^2)+\alpha_2(x_3^2+x_4^2))x_2,\\
\dot{x}_3
=& -\omega_2 x_4+(\mu+\alpha_3(x_1^2+x_2^2)+\alpha_4(x_3^2+x_4^2))x_3,\\
\dot{x}_4
=& \omega_2 x_3+(\mu+\alpha_3(x_1^2+x_2^2)+\alpha_4(x_3^2+x_4^2))x_4,
\end{split}
\label{eqn:dH}
\end{equation}
represent normal forms of fold-Hopf and double-Hopf bifurcations, respectively,
 where $\mu,\nu\in\Rset$ are the control parameters:
At $(\mu,\nu)=(0,0)$, fold (saddle-node) and Hopf bifurcation curves meet for the former
 and two Hopf bifurcation curves for the latter.
Such codimension-two bifurcations are fundamental and interesting phenomena
 in dynamical systems
 and have been studied extensively \cite{GH83,HI11,K04}
 since the seminal papers of Arnold \cite{A72} and Takens \cite{T74}.
In \cite{AY20,Y18a},
 the integrability of the normal forms \eqref{eqn:fH} and \eqref{eqn:dH}
 in the Bogoyavlenskij sense were discussed:
 They were shown to be meromorphically nonintegrable
 for almost all parameter values of $\alpha_j$, $j=1,2$,
 near the $x_3$-axis and the $(x_1,x_2)$- or $(x_3,x_4)$-plane, respectively,
 when $(\mu,\nu)\neq(0,0)$,
 while it was not determined there
 whether they are meromorphically nonintegrable or not when $(\mu,\nu)=(0,0)$.
 
We now state our main results.
Let $\Zset_{\ge l}=\{j\in\Zset\mid j\ge l\}$.

\begin{thm}
\label{thm:main1}
If one of the following conditions holds, then the system~\eqref{eqn:PD3}
 is not meromorphiaclly integrable in the Bogoyavlenskij sense near the $x_3$-axis$:$
\begin{enumerate}
\setlength{\leftskip}{-1em}
\item[\rm(i)]
$\alpha_1\alpha_3\alpha_4\beta_6,
 \beta_2/\beta_6-\alpha_1/\alpha_3,\beta_5/\beta_6-\alpha_4/\alpha_3\neq 0$,
 $\alpha_1/\alpha_3$ or $\beta_2/\beta_6\not\in\Qset$, and
\begin{enumerate}
\setlength{\leftskip}{0.1em}
\setlength{\labelwidth}{5.6em}
\item[\rm(ia)]
$\alpha_1/\alpha_3=1$
 or $\beta_2/\beta_6-\alpha_1/\alpha_3=\tfrac{1}{2};$
\item[\it or \rm(ib)]
$2\alpha_1/\alpha_3\not\in\Zset_{\ge 2}$,
 $2(\beta_2/\beta_6-\alpha_1/\alpha_3)\not\in\Zset_{\ge 1}$, and
$\beta_1\beta_6-\beta_2\beta_5\neq 0$ or
\begin{equation}
2(\alpha_1-\alpha_3)(\alpha_1\beta_5-\alpha_3\beta_1)
 +\alpha_4(3\alpha_1\beta_6-2\alpha_3\beta_2)\neq 0;
\label{eqn:thm1a}
\end{equation}
\end{enumerate}
\item[\rm(ii)]
$\alpha_1\alpha_3\alpha_4\beta_6,
 \alpha_1\beta_5-\alpha_3\beta_1-\alpha_4\beta_2\neq 0$
 and $\alpha_1/\alpha_3=\beta_2/\beta_6\not\in\Qset;$
\item[\rm(iii)]
$\alpha_1\alpha_3\alpha_4\beta_6\neq 0$,
 $\beta_5/\beta_6=\alpha_4/\alpha_3$,
 $\alpha_1/\alpha_3$ or $\beta_2/\beta_6\not\in\Qset$, and
\begin{enumerate}
\setlength{\leftskip}{0.8em}
\setlength{\labelwidth}{5.6em}
\item[\rm(iiia)]
$\alpha_1\beta_5+\alpha_3\beta_1-\alpha_4\beta_2\neq 0$, and
\begin{enumerate}
\setlength{\leftskip}{2.5em}
\setlength{\labelwidth}{5.6em}
\item[\rm(iiia1)]
$\alpha_1/\alpha_3=1$ or $\beta_2/\beta_6-\alpha_1/\alpha_3=\tfrac{1}{2}$
\item[\it or \rm(iiia2)]
$2\alpha_1/\alpha_3\not\in\Zset_{\ge 2}$, $2(\beta_2/\beta_6-\alpha_1/\alpha_3)\not\in\Zset_{\ge 1}$ and
\begin{equation}
\alpha_1\beta_6(2\beta_1-3\beta_5)-2\alpha_3(\beta_1\beta_6-\beta_2\beta_5)\neq 0;
\label{eqn:thm1b}
\end{equation}
\end{enumerate}
\item[\it or \rm(iiib)]
$\alpha_1\beta_5+\alpha_3\beta_1-\alpha_4\beta_2=0$, and
\begin{enumerate}
\setlength{\leftskip}{2.5em}
\setlength{\labelwidth}{5.6em}
\item[\rm(iiib1)]
$\alpha_1/\alpha_3=1;$
\item[\it or \rm(iiib2)]
$2\alpha_1/\alpha_3\not\in\Zset_{\ge 2}$ and $\beta_2\neq\beta_6;$
\end{enumerate}
\end{enumerate}
\item[\rm(iv)]
$\alpha_1\alpha_3\beta_6,\beta_2/\beta_6-\alpha_1/\alpha_3\neq 0$, $\alpha_4=0$, 
 $\alpha_1/\alpha_3$ or $\beta_2/\beta_6\not\in\Qset$, and
\begin{enumerate}
\setlength{\leftskip}{0.7em}
\setlength{\labelwidth}{5.6em}
\item[\rm(iva)]
$\beta_5,\alpha_3\beta_1-\alpha_1\beta_5\neq 0$, and
\begin{enumerate}
\setlength{\leftskip}{2.4em}
\setlength{\labelwidth}{5.6em}
\item[\rm(iva1)]
$\alpha_1/\alpha_3$ or $\beta_2/\beta_6-\alpha_1/\alpha_3=\tfrac{1}{2};$
\item[\it or \rm(iva2)]
 $2\alpha_1/\alpha_3,2(\beta_2/\beta_6-\alpha_1/\alpha_3)\not\in\Zset_{\ge 1}$ and
\begin{equation}
2\alpha_1\beta_6(\beta_1-\beta_5)
 +\alpha_3(\beta_1\beta_6+\beta_2\beta_5-2\beta_1\beta_2)\neq 0;
\label{eqn:thm1c}
\end{equation}
\end{enumerate}
\item[\rm(ivb)]
$\beta_5\neq 0$, $\beta_1/\beta_5=\alpha_1/\alpha_3$, 
 $\beta_2/\beta_6\neq\tfrac{1}{2}$
 and $2(\beta_2/\beta_6-\alpha_1/\alpha_3)\not\in\Zset_{\ge 2};$
\item[\it or \rm(ivc)]
$\beta_1\neq 0$, $\beta_5=0$, $\beta_2/\beta_6\neq\tfrac{1}{2}$
 and $2\alpha_1/\alpha_3\not\in\Zset_{\ge 2};$
\end{enumerate}
\item[\rm(v)]
$\alpha_1\alpha_3\beta_6,\alpha_3\beta_1-\alpha_1\beta_5\neq 0$, $\alpha_4=0$
 and $\alpha_1/\alpha_3=\beta_2/\beta_6\not\in\Qset;$
\item[(vi)]
$\alpha_3\beta_6\neq 0$, $\alpha_1=0$, $\beta_2/\beta_6\not\in\Qset$ and
\begin{enumerate}
\setlength{\leftskip}{0.7em}
\setlength{\labelwidth}{5.6em}
\item[\rm(via)]
$\beta_5/\beta_6-\alpha_4/\alpha_3,\beta_1/\beta_2-\alpha_4/\alpha_3\neq 0$ and
\begin{equation}
\alpha_3(2\beta_1\beta_2-\beta_2\beta_5-\beta_1\beta_6)
 -2\alpha_4\beta_2(\beta_2-\beta_6)\neq 0;
\label{eqn:thm1d}
\end{equation}
\item[\rm(vib)]
$\beta_1/\beta_2=\alpha_4/\alpha_3$ and $\beta_5/\beta_6\neq\alpha_4/\alpha_3;$ 
\item[\it or \rm(vic)]
$\beta_5/\beta_6\neq\alpha_4/\alpha_3$ and $\beta_1/\beta_2=\alpha_4/\alpha_3;$
\end{enumerate}
\item[\rm(vii)]
$\alpha_1\alpha_4\beta_6\neq 0$ and $\alpha_3=0$,
 and $\beta_1\beta_6-\beta_2\beta_5$
  or $2\alpha_1\beta_5+3\alpha_4\beta_6\neq 0;$
\item[\rm(viii)]
$\alpha_1\beta_6\neq 0$, $\alpha_3,\alpha_4=0$,
 $\beta_1\neq\beta_5$, and 
\begin{enumerate}
\setlength{\leftskip}{1.2em}
\setlength{\labelwidth}{5.6em}
\item[\rm(viiia)]
$\beta_5\neq 0;$
\item[\it or \rm(viiib)]
$\beta_5=0$ and $\beta_2/\beta_6\neq\tfrac{1}{2}$.
\end{enumerate}
\end{enumerate}
\end{thm}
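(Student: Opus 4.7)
The plan is to exploit the rotational $S^1$-symmetry of \eqref{eqn:PD3}: the action $(x_1,x_2,x_3)\mapsto(x_1\cos\theta-x_2\sin\theta,\,x_1\sin\theta+x_2\cos\theta,\,x_3)$ commutes with the vector field, so in the invariants $u=x_1^2+x_2^2$ and $v=x_3$ the flow descends to the planar system
\begin{align*}
\dot u &= 2u(\alpha_1 v+\beta_1 u+\beta_2 v^2),\\
\dot v &= \alpha_3 u+\alpha_4 v^2+(\beta_5 u+\beta_6 v)v^2.
\end{align*}
The generator of the $S^1$-action commutes with $f$ and can therefore be incorporated into any Bogoyavlenskij-integrable structure on \eqref{eqn:PD3}; reducing by this symmetry shows that meromorphic integrability of \eqref{eqn:PD3} near the $x_3$-axis forces the reduced planar system to admit a non-constant meromorphic first integral near the $v$-axis. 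It therefore suffices to rule out such a first integral under each of the hypotheses (i)--(viii).

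For the planar system, I would study the formal expansion of any candidate first integral along the invariant line $u=0$, on which $v$ obeys the separable ODE $\dot v=v^2(\alpha_4+\beta_6 v)$. Writing $F(u,v)=\sum_{k\ge k_0}F_k(v)u^k$ with $F_{k_0}\not\equiv 0$ and substituting into $\dot F\equiv 0$ yields a triangular recursion in which $F_{k_0}$ satisfies a homogeneous first-order linear ODE, and each subsequent $F_k$ satisfies that same ODE with an inhomogeneous source polynomial in $F_{k_0},\dots,F_{k-1}$. The homogeneous equation is Fuchsian on $\mathbb{P}^1$ with regular singular points at $v=0$ and $v=-\alpha_4/\beta_6$ (when $\alpha_4\beta_6\ne 0$), and its characteristic exponents scale linearly in $k_0$ with coefficients that are rational functions of the ratios $\alpha_1/\alpha_3$, $\alpha_4/\alpha_3$, $\beta_2/\beta_6$, $\beta_5/\beta_6$ appearing in the statement. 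Meromorphicity of $F_{k_0}$ on a neighborhood of $v=0$ forces these exponents to be integers, which in case (i) the irrationality hypothesis on $\alpha_1/\alpha_3$ or $\beta_2/\beta_6$ precludes for every $k_0\in\Zset_{\ge 1}$; thus no non-constant meromorphic first integral can exist. This parallels the strategy of \cite{Y22} for the quadratic truncation \eqref{eqn:PD3t}; the new ingredient here is that the cubic terms $\beta_j$ contribute source terms to the higher-order recursions, producing the auxiliary polynomial non-resonance conditions \eqref{eqn:thm1a}--\eqref{eqn:thm1d}.

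The eight cases correspond to the possible combinations of vanishing leading coefficients -- cases (iv)--(v) for $\alpha_4=0$, (vi) for $\alpha_1=0$, (vii)--(viii) for $\alpha_3=0$ -- together with resonance relations such as $\alpha_1/\alpha_3=\beta_2/\beta_6$ in (ii) and $\beta_5/\beta_6=\alpha_4/\alpha_3$ in (iii); each degeneracy either alters the singular structure of the Fuchsian ODE governing $F_{k_0}$ or collapses two of its characteristic exponents, so must be treated by a separate computation of the recursion at the order where the obstruction first appears. I expect the principal obstacle to be the resonant subcases in which the lowest-order recursion already admits a meromorphic solution -- (i)(b), (iii)(a2), (iv)(a2) and their analogues -- where the obstruction emerges only after computing the source term of the next-order recursion explicitly and verifying that one of \eqref{eqn:thm1a}--\eqref{eqn:thm1d} is non-zero, together with the bookkeeping needed to exclude every admissible positive integer $k_0$ under the arithmetic hypotheses $2\alpha_1/\alpha_3\notin\Zset_{\ge 2}$ and $2(\beta_2/\beta_6-\alpha_1/\alpha_3)\notin\Zset_{\ge 1}$.
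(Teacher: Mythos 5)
Your reduction to the planar system in the invariants $(u,v)=(x_1^2+x_2^2,x_3)$ is essentially the reduction the paper performs (Proposition~\ref{prop:2a} and Corollary~\ref{cor:PD3}, in the variables $(r,x_3)$ with $u=r^2$), but the conclusion you draw from it is too weak, and this is where the argument breaks. Bogoyavlenskij integrability of the reduced planar system does \emph{not} mean that it possesses a non-constant meromorphic first integral: a planar system can be $(2,0)$-integrable, i.e.\ admit a second meromorphic commuting vector field and no meromorphic first integral at all, and this is exactly what the reduction yields when the original three-dimensional system is $(3,0)$-integrable. Your assertion that the $S^1$-generator ``can be incorporated into any Bogoyavlenskij-integrable structure'' does not repair this; nothing forces a first integral to appear downstairs. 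Consequently your entire technical engine --- the expansion $F=\sum_{k\ge k_0}F_k(v)u^k$ and the integrality of the characteristic exponents of the Fuchsian equation for $F_{k_0}$ --- obstructs only first integrals and cannot establish nonintegrability in the Bogoyavlenskij sense. The paper closes precisely this hole with the Ayoul--Zung extension of Morales--Ramis--Sim\'o theory (Theorems~\ref{thm:a} and \ref{thm:c}): integrability, with commuting vector fields allowed, forces the identity component of the differential Galois group of every higher-order variational equation along $r=0$ to be commutative, and the obstruction is then extracted from the third-order equation ($\kappa_2\equiv 0$, so $k=3$ is the first nontrivial order) via $\Omega$, $\Theta_3/\Omega^2$ and the polynomial-solution criteria of Propositions~\ref{prop:3a} and \ref{prop:3b}.

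A symptom that your route is aimed at the wrong statement: in case (i) the hypothesis ``$\alpha_1/\alpha_3$ or $\beta_2/\beta_6\notin\Qset$'' already makes at least one of the two leading exponents of $F_{k_0}$ (at $v=0$ and at $v=-\alpha_3/\beta_6$) irrational for every $k_0\ge 1$, so your argument would exclude first integrals with no need for the subconditions (ia), (ib) or \eqref{eqn:thm1a}. Those subconditions are not higher-order corrections to a first-integral obstruction; they are what guarantees that $\Theta_3/\Omega^2$ is irrational over $\Cset(\xi)$, hence that the third-order Galois group has non-commutative identity component --- i.e.\ they are needed to exclude the commuting-vector-field alternative, which your framework never touches. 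To salvage your approach you would have to derive an analogous formal obstruction for a meromorphic commuting vector field expanded in powers of $u$, which is substantially harder and is exactly what the Galois-theoretic machinery is designed to avoid.
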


We give a proof of Theorem~\ref{thm:main1} in Section~4.

\begin{rmk}
Assume that the hypotheses of Theorem~$\ref{thm:Y1}$ do  not hold, i.e.,
\begin{enumerate}
\setlength{\leftskip}{-0.6em}
\item[\rm(i)]
$\alpha_1\alpha_4=0;$
\item[or \rm(ii)]
$\alpha_1\alpha_4<0$ and $\alpha_4/\alpha_1\in\Qset$.
\end{enumerate}
In case {\rm(i)},
 conditions~{\rm(iv)}-{\rm(vi)} and {\rm(viii)} of Theorem~$\ref{thm:main1}$
 may hold.
In case {\rm(ii)},
 conditions~{\rm(i)}-{\rm(iii)} and {\rm(vii)} of Theorem~$\ref{thm:main1}$
 may hold.
Thus, even if the hypotheses of Theorem~$\ref{thm:Y1}$ do  not hold,
 the system~\eqref{eqn:PD3} may be meromorphically nonintegrable.
So Theorem~$\ref{thm:main1}$ gives answers to the questions stated above for \eqref{eqn:PD3}.
\end{rmk}

\begin{thm}
\label{thm:main2}
If one of the following conditions holds, then the system~\eqref{eqn:PD4}
 is not meromorphiaclly integrable in the Bogoyavlenskij sense near the $(x_1,x_2)$-plane$:$
\begin{enumerate}
\setlength{\leftskip}{-1.2em}
\item[\rm(i)]
$\alpha_2\alpha_4\beta_5,\alpha_1\alpha_4-\alpha_2\alpha_3,
 \beta_2/\beta_5-\alpha_2/\alpha_4,\beta_6/\beta_5-\alpha_3/\alpha_4\neq 0$,
 $\alpha_2/\alpha_4$ or $\beta_2/\beta_5\not\in\Qset$, and
\begin{enumerate}
\setlength{\leftskip}{0.1em}
\setlength{\labelwidth}{5.6em}
\item[\rm(ia)]
$\alpha_2/\alpha_4$ or $\beta_2/\beta_5-\alpha_2/\alpha_4=1;$
\item[\it or \rm(ib)]
$2\alpha_2/\alpha_4\not\in\Zset_{\ge 2}$,
 $\beta_2/\beta_5-\alpha_2/\alpha_4\not\in\Zset_{\ge 1}$,
 and $\beta_3\beta_5-\beta_2\beta_6\neq 0$ or
\begin{align}
&
(\alpha_1-\alpha_3)(\alpha_2\beta_5-\alpha_4\beta_2)\notag\\
&
 -(\alpha_2-\alpha_4)(\alpha_2\beta_6-\alpha_4\beta_3)
 +(\alpha_1\alpha_4-\alpha_2\alpha_3)\beta_5\neq 0;
\label{eqn:thm2a}
\end{align}
\end{enumerate}
\item[\rm(ii)]
$\alpha_2\alpha_4\beta_5,\alpha_1\alpha_4-\alpha_2\alpha_3,
 \alpha_1\beta_5+\alpha_2\beta_6-\alpha_3\beta_2-\alpha_4\beta_3\neq 0$
 and $\beta_2/\beta_5=\alpha_2/\alpha_4\not\in\Qset;$
\item[\rm(iii)]
$\alpha_2\alpha_4\beta_5,\alpha_1\alpha_4-\alpha_2\alpha_3,
 \beta_2/\beta_5-\alpha_2/\alpha_4\neq 0$,
 $\beta_6/\beta_5=\alpha_3/\alpha_4$,
 $\alpha_2/\alpha_4$ or $\beta_2/\beta_5\not\in\Qset$, and
\begin{enumerate}
\setlength{\leftskip}{0.4em}
\setlength{\labelwidth}{5.6em}
\item[\rm(iiia)]
$\beta_6/\beta_5-\alpha_1/\alpha_2,
 \alpha_1\beta_5-\alpha_2\beta_6+\alpha_3\beta_2-\alpha_4\beta_3\neq 0$, and
\begin{enumerate}
\setlength{\leftskip}{2.1em}
\setlength{\labelwidth}{5.6em}
\item[\rm(iiia1)]
$\alpha_2/\alpha_4=1;$
\item[\it or \rm(iiia2)]
$2\alpha_2/\alpha_4\not\in\Zset_{\ge 2}$ and
\begin{equation}
\alpha_1(\beta_2-\beta_5) -\alpha_2(\beta_3-\beta_6)
 -\alpha_3\beta_6+\alpha_4\beta_3\neq 0;
\label{eqn:thm2b}
\end{equation}
\end{enumerate}
\item[\rm(iiib)]
$\beta_6/\beta_5=\alpha_1/\alpha_2$
 and $\beta_2,\alpha_1\beta_5-\alpha_2\beta_6+\alpha_3\beta_2-\alpha_4\beta_3\neq 0;$
\item[\it or \rm(iiic)]
$\alpha_1\beta_5-\alpha_2\beta_6+\alpha_3\beta_2-\alpha_4\beta_3=0$,
 $\beta_6/\beta_5\neq\alpha_1/\alpha_2$, and
\begin{enumerate}
\setlength{\leftskip}{2.1em}
\setlength{\labelwidth}{5.6em}
\item[\rm(iiic1)]
$\alpha_2/\alpha_4=1$ or $\beta_2/\beta_5-\alpha_2/\alpha_4=-1;$
\item[\it or \rm(iiic2)]
$\beta_2\neq 0$, $2\alpha_2/\alpha_4\not\in\Zset_{\ge 2}$
 and $2(\beta_2/\beta_5-\alpha_2/\alpha_4)\not\in\Zset_{\ge -1};$
\end{enumerate}
\end{enumerate}
\item[\rm(iv)]
$\alpha_2\alpha_4\beta_5,\beta_2/\beta_5-\alpha_2/\alpha_4,\beta_6/\beta_5-\alpha_3/\alpha_4\neq 0$,
 $\alpha_1\alpha_4-\alpha_2\alpha_3=0$,
 $\alpha_2/\alpha_4$ or $\beta_2/\beta_5\not\in\Qset$, and
\begin{enumerate}
\setlength{\leftskip}{0.6em}
\setlength{\labelwidth}{5.6em}
\item[\rm(iva)]
$\alpha_1\beta_5-\alpha_2\beta_6-\alpha_3\beta_2+\alpha_4\beta_3\neq 0$, and
\begin{enumerate}
\setlength{\leftskip}{2.2em}
\setlength{\labelwidth}{5.6em}
\item[\rm(iva1)]
$\beta_2/\beta_5-\alpha_2/\alpha_4=1;$
\item[\it or \rm(iva2)]
$\beta_2/\beta_5-\alpha_2/\alpha_4\not\in\Zset_{\ge 1}$ and
\begin{equation}
(\beta_2-\beta_5)(\alpha_1\beta_5-\alpha_3\beta_2+\alpha_4\beta_3)
 -(\beta_3-\beta_6)\alpha_2\beta_5\neq 0;
\label{eqn:thm2c}
\end{equation}
\end{enumerate}
\item[\it or \rm(ivb)]
$\alpha_1\beta_5-\alpha_2\beta_6-\alpha_3\beta_2+\alpha_4\beta_3=0$, and
\begin{enumerate}
\setlength{\leftskip}{2.4em}
\setlength{\labelwidth}{5.6em}
\item[\rm(ivb1)]
$\alpha_2/\alpha_4=-\tfrac{1}{2}$
 or $\beta_2/\beta_5-\alpha_2/\alpha_4=1;$
\item[\it or \rm(ivb2)]
$2\alpha_2/\alpha_4\not\in\Zset_{\ge -1}$
 and $\beta_2/\beta_5-\alpha_2/\alpha_4\not\in\Zset_{\ge 1};$
\end{enumerate}
\end{enumerate}
\item[\rm(v)]
$\alpha_1\alpha_4\beta_2\beta_5,\beta_6/\beta_5-\alpha_3/\alpha_4\neq 0$,
 $\alpha_2=0$, $\beta_2/\beta_5\not\in\Qset$, and
 $\beta_3\beta_5-\beta_2\beta_6$ or $(\alpha_1-\alpha_3)\beta_2+\alpha_4\beta_3\neq 0;$
\item[\rm(vi)]
$\alpha_4\beta_2\beta_5,\beta_3/\beta_2-\alpha_3/\alpha_4\neq 0$, $\alpha_1,\alpha_2=0$
 and $\beta_2/\beta_5\not\in\Qset;$
\item[\rm(vii)]
$\alpha_2\alpha_3\beta_5\neq 0$, $\alpha_4=0$, and
 $\beta_3\beta_5-\beta_2\beta_6$ or $(\alpha_1-2\alpha_3)\beta_5-\alpha_2\beta_6\neq 0;$
\item[\rm(viii)]
$\alpha_1\alpha_2\beta_5\neq 0$, $\alpha_3,\alpha_4=0$, and
\begin{enumerate}
\setlength{\leftskip}{1.1em}
\setlength{\labelwidth}{5.6em}
\item[\rm(viiia)]
$\beta_6/\beta_5-\alpha_1/\alpha_2,\alpha_1(\beta_2-\beta_5)-\alpha_2(\beta_3-\beta_6)\neq 0;$
\item[\it or \rm(viiib)]
$\beta_6/\beta_5=\alpha_1/\alpha_2$
 and $\beta_2,\beta_3\beta_5-\beta_2\beta_6\neq 0$.
\end{enumerate}
\end{enumerate}
\end{thm}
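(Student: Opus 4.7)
The plan parallels the one for Theorem~\ref{thm:main1}. First, I would exploit the PD normal form structure to reduce (\ref{eqn:PD4}) to a planar system. In the $T^2$-invariants $u = x_1^2+x_2^2 = z_1z_2$ and $v = x_3^2+x_4^2 = z_3z_4$, the system (\ref{eqn:PD4}) (equivalently (\ref{eqn:cpd2})) projects onto the two-dimensional system
\begin{align*}
\dot u &= 2u\bigl(\alpha_1 u + \alpha_2 v + \beta_1 u^2 + \beta_3 uv + \beta_2 v^2\bigr),\\
\dot v &= 2v\bigl(\alpha_3 u + \alpha_4 v + \beta_4 u^2 + \beta_6 uv + \beta_5 v^2\bigr).
\end{align*}
A reduction lemma -- analogous to the one used for Theorem~\ref{thm:main1} and based on the two commuting $S^1$-actions present in the PD normal form combined with the incommensurability $\omega_1/\omega_2\notin\Qset$ -- shows that meromorphic Bogoyavlenskij integrability of (\ref{eqn:PD4}) near the $(x_1,x_2)$-plane descends to meromorphic integrability of this planar system near the invariant line $\{v=0\}$.

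Second, I would apply to the reduced planar system the sharpened Morales-Ramis-Sim\'o-type criterion for planar nonintegrability announced in the abstract. Along $\{v=0\}$ one has the particular solution $u(t)$ governed by $\dot u = 2u(\alpha_1 u + \beta_1 u^2)$; linearizing the $v$-equation along this curve and taking $u$ as the independent variable produces a second-order linear ODE with rational coefficients on the Riemann sphere. Its singular points $u=0$, $u=\infty$, and the root of $\alpha_1+\beta_1 u=0$ (with suitable confluences when some of $\alpha_j,\beta_j$ vanish) carry exponents that are explicit affine functions of the ratios $\alpha_2/\alpha_4$, $\beta_2/\beta_5$, $\beta_6/\beta_5$, $\alpha_3/\alpha_4$. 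Depending on the vanishing pattern of $\alpha_1\alpha_4-\alpha_2\alpha_3$, $\alpha_j$, $\beta_j$, this equation is either of Gauss hypergeometric type (cases (i)--(iii)) or one of its confluent cousins (cases (iv)--(viii)).

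Third, I would verify that under each condition (i)--(viii) the identity component of the differential Galois group of this variational equation is non-abelian, thereby obstructing a meromorphic first integral of the planar system and, by the reduction lemma, of (\ref{eqn:PD4}). The irrationality hypotheses $\alpha_2/\alpha_4\notin\Qset$ or $\beta_2/\beta_5\notin\Qset$ exclude integer exponent differences at the generic singular pair, while Kimura's classification of Liouvillian-integrable hypergeometric equations leaves only a short discrete list of boundary cases. The algebraic side-conditions (\ref{eqn:thm2a}), (\ref{eqn:thm2b}), (\ref{eqn:thm2c}), together with the non-resonance conditions $2\alpha_2/\alpha_4\notin\Zset_{\ge 2}$, $\beta_2/\beta_5-\alpha_2/\alpha_4\notin\Zset_{\ge 1}$, and so on, are precisely what rule out those remaining abelian possibilities: they guarantee that the off-diagonal coefficient produced in the second-order variational computation does not vanish, preventing the monodromy generators from being simultaneously triangularizable.

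The main obstacle is the case analysis. Each of the branches (i)--(viii) with its sub-branches (ia), (ib), (iiia1), \ldots\ corresponds to a distinct configuration of (a) which singular points of the resulting Fuchsian equation are present or have coalesced, (b) which exponent differences are integer or half-integer, and (c) which polynomial combinations of the $\alpha_j,\beta_j$ control the off-diagonal structure of the monodromy. Matching every such configuration to the relevant line of Kimura's table without overlap or gap is the substantial bookkeeping load; this is where the present theorem becomes more intricate than Theorem~\ref{thm:Y2}, which uses only the $O(|x|^3)$-truncation and thus encounters a strictly coarser singularity structure.
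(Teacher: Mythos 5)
Your overall skeleton --- reduce \eqref{eqn:PD4} to a planar system in the torus invariants and then obstruct integrability by a differential-Galois argument along an invariant line --- matches the paper, which passes to the $(r_1,r_2)$-system \eqref{eqn:PD4p0} via Proposition~\ref{prop:2a} (your $u,v$ are just $r_1^2,r_2^2$). But two things go wrong. First, a bookkeeping issue: the ratios $\alpha_2/\alpha_4$, $\beta_2/\beta_5$, $\beta_6/\beta_5$ that appear in conditions (i)--(viii) are the exponent data of the variational equation along the line $\{u=0\}$ (i.e.\ $x_1=x_2=0$), where $\dot v=2v(\alpha_4 v+\beta_5 v^2)$ and the transverse linearization is governed by $\alpha_2 v+\beta_2 v^2$; linearizing along $\{v=0\}$ as you propose produces $\alpha_3/\alpha_1$, $\beta_4/\beta_1$, which are the data of Theorem~\ref{thm:main3}, not of the present statement. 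So the curve you have chosen cannot yield the stated conditions.

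Second, and more seriously, the technical core is misidentified. For a planar foliation the normal variational equation along an invariant line is a \emph{scalar first-order} equation $\varphi_1'=\kappa_1(\xi)\varphi_1$; its Galois group is a subgroup of $\Cset^\ast$ and is always abelian, so there is no irreducible second-order Fuchsian equation here and Kimura's classification of hypergeometric equations has nothing to act on. The obstruction must come from the \emph{higher-order} variational equations (Ayoul--Zung reduction to Morales--Ramis--Sim\'o, Theorem~\ref{thm:a}), whose linearizations $\mathrm{LVE}_k$ are triangular, hence reducible. Moreover, for \eqref{eqn:PD4p0} one computes $\kappa_2\equiv 0$, so the second-order variational equation is also diagonal and gives no obstruction; one is forced to $k=3$, and non-commutativity of $\G_3^0$ is equivalent to the non-rationality of $\Theta_3(\xi)/\Omega(\xi)^2=\Omega^{-2}\int\kappa_3\Omega^2\,\d\xi$ (condition (H2) of Theorem~\ref{thm:c}). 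The entire content of the case analysis (i)--(viii) is the verification of (H1) and (H2): the latter is reduced, via Propositions~\ref{prop:3a} and \ref{prop:3b}, to showing that an explicit first-order linear ODE \eqref{eqn:prop3a} with polynomial coefficients built from $\kappa_1,\kappa_3$ admits no polynomial solution, and the side conditions such as \eqref{eqn:thm2a}--\eqref{eqn:thm2c} and the exclusions $2\alpha_2/\alpha_4\not\in\Zset_{\ge 2}$, $\beta_2/\beta_5-\alpha_2/\alpha_4\not\in\Zset_{\ge 1}$ arise from degree counts and simplicity of zeros in that criterion, not from ruling out lines of Kimura's table. Your ``off-diagonal coefficient does not vanish'' heuristic points in the right direction but does not supply this mechanism, so the proposal as written has no route to the conclusion.
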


We give a proof of Theorem~\ref{thm:main2} in Section~5.
We replace $x_1,x_2,x_3,x_4$
 (resp. $\alpha_1,\alpha_2,\alpha_3,\alpha_4$ and $\beta_1,\beta_2,\beta_3,\beta_5,\beta_6$)
 with $x_3,x_4,x_1,x_2$
 (resp. $\alpha_3,\alpha_4,\alpha_2,\alpha_1$ and\linebreak $\beta_5,\beta_4,\beta_6,\beta_1,\beta_3$)
 in Theorem~\ref{thm:main2}, and immediately obtain the following.

\begin{thm}
\label{thm:main3}
If one of the following conditions holds, then the system~\eqref{eqn:PD4}
 is not meromorphiaclly integrable in the Bogoyavlenskij sense near the $(x_3,x_4)$-plane$:$
\begin{enumerate}
\setlength{\leftskip}{-1.2em}
\item[\rm(i)]
$\alpha_1\alpha_3\beta_1,\alpha_1\alpha_4-\alpha_2\alpha_3,
 \beta_4/\beta_1-\alpha_3/\alpha_1,\beta_3/\beta_1-\alpha_2/\alpha_1\neq 0$,
 $\alpha_3/\alpha_1$ or $\beta_4/\beta_1\not\in\Qset$, and
\begin{enumerate}
\setlength{\leftskip}{0.1em}
\setlength{\labelwidth}{5.6em}
\item[\rm(ia)]
$\alpha_3/\alpha_1=1$ or $\beta_4/\beta_1-\alpha_3/\alpha_1=1;$
\item[\it or \rm(ib)]
$2\alpha_3/\alpha_1\not\in\Zset_{\ge 2}$,
 $\beta_4/\beta_1-\alpha_3/\alpha_1\not\in\Zset_{\ge 1}$,
 and $\beta_1\beta_6-\beta_3\beta_4\neq 0$ or
\begin{align*}
&
(\alpha_4-\alpha_2)(\alpha_3\beta_1-\alpha_1\beta_4)\notag\\
&
 -(\alpha_3-\alpha_1)(\alpha_3\beta_3-\alpha_1\beta_6)
 +(\alpha_1\alpha_4-\alpha_2\alpha_3)\beta_1\neq 0;
\end{align*}
\end{enumerate}
\item[\rm(ii)]
$\alpha_1\alpha_3\beta_1,\alpha_1\alpha_4-\alpha_2\alpha_3,
 \alpha_4\beta_1+\alpha_3\beta_3-\alpha_2\beta_4-\alpha_1\beta_6\neq 0$
 and $\alpha_3/\alpha_1=\beta_4/\beta_1\not\in\Qset;$
\item[\rm(iii)]
$\alpha_1\alpha_3\beta_1,\alpha_1\alpha_4-\alpha_2\alpha_3,
 \beta_4/\beta_1-\alpha_3/\alpha_1\neq 0$,
 $\beta_3/\beta_1=\alpha_2/\alpha_1$,
 $\alpha_3/\alpha_1$ or $\beta_4/\beta_1\not\in\Qset$, and
\begin{enumerate}
\setlength{\leftskip}{0.4em}
\setlength{\labelwidth}{5.6em}
\item[\rm(iiia)]
$\beta_3/\beta_1-\alpha_4/\alpha_3,
 \alpha_4\beta_1-\alpha_3\beta_3+\alpha_2\beta_4-\alpha_1\beta_6\neq 0$, and
\begin{enumerate}
\setlength{\leftskip}{2.1em}
\setlength{\labelwidth}{5.6em}
\item[\rm(iiia1)]
$\alpha_3/\alpha_1=1;$
\item[\it or \rm(iiia2)]
$2\alpha_3/\alpha_1\not\in\Zset_{\ge 2}$ and
\[
\alpha_4(\beta_4-\beta_1) -\alpha_3(\beta_6-\beta_3)
 -\alpha_2\beta_3+\alpha_1\beta_6\neq 0;
\]
\end{enumerate}
\item[\rm(iiib)]
$\beta_3/\beta_1=\alpha_4/\alpha_3$
 and $\beta_4,\alpha_4\beta_1-\alpha_3\beta_3+\alpha_2\beta_4-\alpha_1\beta_6\neq 0;$
\item[\it or \rm(iiic)]
$\alpha_4\beta_1-\alpha_3\beta_3+\alpha_2\beta_4-\alpha_1\beta_6=0$,
 $\beta_3/\beta_1\neq\alpha_4/\alpha_3$, and
\begin{enumerate}
\setlength{\leftskip}{2.1em}
\setlength{\labelwidth}{5.6em}
\item[\rm(iiic1)]
$\alpha_3/\alpha_1=1$ or $\beta_4/\beta_1-\alpha_3/\alpha_1=-1;$
\item[\it or \rm(iiic2)]
$\beta_4\neq 0$, $2\alpha_3/\alpha_1\not\in\Zset_{\ge 2}$
 and $2(\beta_4/\beta_1-\alpha_3/\alpha_1)\not\in\Zset_{\ge -1};$
\end{enumerate}
\end{enumerate}
\item[\rm(iv)]
$\alpha_1\alpha_3\beta_1,\beta_4/\beta_1-\alpha_3/\alpha_1,\beta_3/\beta_1-\alpha_2/\alpha_1\neq 0$,
 $\alpha_1\alpha_4-\alpha_2\alpha_3=0$,
 $\alpha_3/\alpha_1$ or $\beta_4/\beta_1\not\in\Qset$, and
\begin{enumerate}
\setlength{\leftskip}{0.6em}
\setlength{\labelwidth}{5.6em}
\item[\rm(iva)]
$\alpha_4\beta_1-\alpha_3\beta_3-\alpha_2\beta_4+\alpha_1\beta_6\neq 0$, and
\begin{enumerate}
\setlength{\leftskip}{2.2em}
\setlength{\labelwidth}{5.6em}
\item[\rm(iva1)]
$\beta_4/\beta_1-\alpha_3/\alpha_1=1;$
\item[\it or \rm(iva2)]
$\beta_4/\beta_1-\alpha_3/\alpha_1\not\in\Zset_{\ge 1}$ and
\[
(\beta_4-\beta_1)(\alpha_4\beta_1-\alpha_2\beta_4+\alpha_1\beta_6)
 -(\beta_6-\beta_3)\alpha_3\beta_1\neq 0;
\]
\end{enumerate}
\item[\it or \rm(ivb)]
$\alpha_4\beta_1-\alpha_3\beta_3-\alpha_2\beta_4+\alpha_1\beta_6=0$, and
\begin{enumerate}
\setlength{\leftskip}{2.4em}
\setlength{\labelwidth}{5.6em}
\item[\rm(ivb1)]
$\alpha_3/\alpha_1=-\tfrac{1}{2}$
 or $\beta_4/\beta_1-\alpha_3/\alpha_1=1;$
\item[\it or \rm(ivb2)]
$2\alpha_3/\alpha_1\not\in\Zset_{\ge -1}$
 and $\beta_4/\beta_1-\alpha_3/\alpha_1\not\in\Zset_{\ge 1};$
\end{enumerate}
\end{enumerate}
\item[\rm(v)]
$\alpha_1\alpha_4\beta_1\beta_4,\beta_3/\beta_1-\alpha_2/\alpha_1\neq 0$,
 $\alpha_3=0$, $\beta_4/\beta_1\not\in\Qset$, and
 $\beta_1\beta_6-\beta_3\beta_4$ or $(\alpha_4-\alpha_2)\beta_4+\alpha_1\beta_6\neq 0;$
\item[\rm(vi)]
$\alpha_1\beta_1\beta_4,\beta_6/\beta_4-\alpha_2/\alpha_1\neq 0$,
 and $\alpha_3,\alpha_4=0$ and $\beta_4/\beta_1\not\in\Qset;$
\item[\rm(vii)]
$\alpha_3\alpha_2\beta_1\neq 0$, $\alpha_1=0$, and
 $\beta_1\beta_6-\beta_3\beta_4$ or $(\alpha_4-2\alpha_2)\beta_1-\alpha_3\beta_3\neq 0;$
\item[\rm(viii)]
$\alpha_4\alpha_3\beta_1\neq 0$, $\alpha_1,\alpha_3=0$, and
\begin{enumerate}
\setlength{\leftskip}{1.1em}
\setlength{\labelwidth}{5.6em}
\item[\rm(viiia)]
$\beta_3/\beta_1-\alpha_4/\alpha_3,\alpha_4(\beta_4-\beta_1)-\alpha_3(\beta_6-\beta_3)\neq 0;$
\item[\it or \rm(viiib)]
$\beta_3/\beta_1=\alpha_4/\alpha_2$
 and $\beta_4,\beta_3\beta_5-\beta_4\beta_6\neq 0$.
\end{enumerate}
\end{enumerate}
\end{thm}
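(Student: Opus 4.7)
The plan is to deduce Theorem~\ref{thm:main3} directly from Theorem~\ref{thm:main2} by exploiting the discrete symmetry of \eqref{eqn:PD4} that swaps its two invariant $2$-planes. The linear involution $\sigma:(x_1,x_2,x_3,x_4)\mapsto(x_3,x_4,x_1,x_2)$ sends the $(x_3,x_4)$-plane (the object of Theorem~\ref{thm:main3}) onto the $(x_1,x_2)$-plane (the object of Theorem~\ref{thm:main2}), and, being a linear diffeomorphism, it preserves meromorphic integrability in the Bogoyavlenskij sense of Definition~\ref{dfn:1a} as well as the meromorphic class of first integrals and commuting vector fields near the relevant plane.

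First, I would check by direct inspection of \eqref{eqn:PD4} that the pullback of \eqref{eqn:PD4} by $\sigma$ is again a system of the form \eqref{eqn:PD4}, but with $\omega_1\leftrightarrow\omega_2$ (which leaves the irrationality hypothesis $\omega_1/\omega_2\notin\Qset$ intact) and with the cubic and quintic coefficients permuted by the natural involution that exchanges self-coupling and cross-coupling on each plane; explicitly $\alpha_1\leftrightarrow\alpha_4$, $\alpha_2\leftrightarrow\alpha_3$, $\beta_1\leftrightarrow\beta_5$, $\beta_2\leftrightarrow\beta_4$, $\beta_3\leftrightarrow\beta_6$, and analogously for the $\tilde\alpha_j,\tilde\beta_j$. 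Second, I would apply Theorem~\ref{thm:main2} to the pushed-forward system: this yields meromorphic nonintegrability near its $(x_1,x_2)$-plane whenever the hypotheses of Theorem~\ref{thm:main2} hold for the transformed coefficients. Finally, I would translate those hypotheses back to the original coefficients through the above involution. Each of conditions~(i)-(viii) in Theorem~\ref{thm:main3} is, by construction, the pullback of the corresponding condition of Theorem~\ref{thm:main2}, and pulling the conclusion back through $\sigma^{-1}=\sigma$ gives meromorphic nonintegrability near the $(x_3,x_4)$-plane of the original system.

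No substantive obstacle is anticipated: once the $\sigma$-symmetry of \eqref{eqn:PD4} is recorded, the remainder of the argument is a bookkeeping exercise of substituting the coefficient involution into the (rather long) statement of Theorem~\ref{thm:main2}. The only care required is to keep track of which $\beta_j$ indexes the self-self, cross-cross, and mixed quartic monomials in \eqref{eqn:PD4}, so that the transposition $\beta_1\leftrightarrow\beta_5$, $\beta_2\leftrightarrow\beta_4$, $\beta_3\leftrightarrow\beta_6$ is applied consistently throughout the list of hypotheses. In particular, no further invocation of Morales--Ramis--Sim\'o theory or of the planar reduction machinery developed in Section~4 is needed beyond what Theorem~\ref{thm:main2} already provides.
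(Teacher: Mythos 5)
Your proposal is correct and is precisely the paper's own (one-line) derivation: Theorem~\ref{thm:main3} is obtained from Theorem~\ref{thm:main2} by the coordinate swap $(x_1,x_2,x_3,x_4)\mapsto(x_3,x_4,x_1,x_2)$, which maps \eqref{eqn:PD4} to a system of the same form with the coefficient involution $\alpha_1\leftrightarrow\alpha_4$, $\alpha_2\leftrightarrow\alpha_3$, $\beta_1\leftrightarrow\beta_5$, $\beta_2\leftrightarrow\beta_4$, $\beta_3\leftrightarrow\beta_6$ (and $\omega_1\leftrightarrow\omega_2$, harmless since $\omega_1/\omega_2\not\in\Qset$ is symmetric). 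Your involution on the $\alpha_j$ is the one actually consistent with the stated conditions of Theorem~\ref{thm:main3}, so no gap remains.
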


\begin{rmk}
Assume that the hypotheses of Theorem~$\ref{thm:Y2}$ do  not hold, i.e.,
\begin{enumerate}
\setlength{\leftskip}{-0.6em}
\item[\rm(i)]
$\alpha_1=\alpha_3$ or $\alpha_2=\alpha_4$;
\item[\rm(ii)]
$\alpha_1\alpha_3,\alpha_2\alpha_4\le 0$,
 and $\alpha_1,\alpha_2,\alpha_3$ or $\alpha_4=0;$
\item[or \rm(iii)]
$\alpha_1\alpha_3,\alpha_2\alpha_4<0$,
 and $\alpha_1/\alpha_3$ or $\alpha_2/\alpha_4\in\Qset$.
\end{enumerate}
In both cases {\rm(i)} and {\rm(ii)},
 conditions~{\rm(i)}-{\rm(iii)} and {\rm(v)}-{\rm(viii)}
 of Theorems~$\ref{thm:main2}$ and $\ref{thm:main3}$ may hold.
In case {\rm(iii)},
 conditions~{\rm(i)}-{\rm(iv)} of Theorems~$\ref{thm:main2}$ and $\ref{thm:main3}$ may hold.
Thus, even if the hypotheses of Theorem~$\ref{thm:Y2}$ do not hold,
 the system~\eqref{eqn:PD3} may be meromorphically nonintegrable.
So Theorems~$\ref{thm:main2}$ and $\ref{thm:main3}$ give answers
 to the questions stated above for \eqref{eqn:PD4}.
\end{rmk}

The outline of this paper is as follows:
In Section~2 we reduce the nonintegrability of \eqref{eqn:PD3} and \eqref{eqn:PD4}
  to that of simple planar systems.
For this purpose, we use Proposition~2.1 of \cite{AY20},
 which enables us to make such a reduction for a class of systems
 including \eqref{eqn:PD3} and \eqref{eqn:PD4}.
In Section~3 we collect necessary results of \cite{AY20} for planar vector fields
 and give a novel one on scalar first-order linear differential equations,
 which is extensively used in the remaining sections.
In Sections~4 and 5, we prove Theorems~\ref{thm:main1} and \ref{thm:main2}, respectively,
 using the results of Sections~2 and 3.


\section{Reduction to Planar Systems}

Now we consider \eqref{eqn:PD3} and \eqref{eqn:PD4}.
We first use the change of coordinate $(x_1,x_2)=(r\cos\theta,r\sin\theta)$
 to transform \eqref{eqn:PD3} to
\begin{equation}
\begin{split}
&
\dot r=(\alpha_1x_3+\beta_1r^2+\beta_2x_3^2)r,\\
&
\dot x_3=\alpha_3r^2+\alpha_4x_3^2+(\beta_5r^2+\beta_6x_3^2)x_3,\\
&
\dot\theta=\omega+(\alpha_2+\beta_3r^2+\beta_4x_3^2)x_3,
\end{split}
\label{eqn:PD3p}
\end{equation}
of which the $(r,x_3)$-components are independent of $\theta$.
Using the change of coordinates $(x_1,x_2)=(r_1\cos\theta_1,r_1\sin\theta_1)$
 and $(x_3,x_4)=(r_2\cos\theta_2,r_2\sin\theta_2)$,
 we also transform \eqref{eqn:PD4} to
\begin{equation}
\begin{split}
&
\dot{r}_1
= r_1(\alpha_1 r_1^2+\alpha_2r_2^2+\beta_1r_1^4+\beta_2r_2^4+\beta_3 r_1^2r_2^2),\\
&
\dot{r}_2
= r_2(\alpha_3 r_1^2+\alpha_4r_2^2+\beta_4r_1^4+\beta_5r_2^4+\beta_6 r_1^2r_2^2),\\
&
\dot{\theta}_1
=\omega_1+(\tilde{\alpha}_1 r_1^2+\tilde{\alpha}_2r_2^2
 +\tilde{\beta}_1r_1^4+\tilde{\beta}_2r_2^4+\tilde{\beta}_3 r_1^2r_2^2),\\
&
\dot{\theta}_2
=\omega_2+(\tilde{\alpha}_3 r_1^2+\tilde{\alpha}_4r_2^2
 +\tilde{\beta}_4r_1^4+\tilde{\beta}_5r_2^4+\tilde{\beta}_6 r_1^2r_2^2),
\end{split}
\label{eqn:PD4p}
\end{equation}
of which the  $(r_1,r_2)$-components are independent of $\theta_1$ and $\theta_2$.
So we expect that one can reduce the nonintegrability of \eqref{eqn:PD3} and \eqref{eqn:PD4}
 to that of the $(r,x_3)$-components of \eqref{eqn:PD3p},
\begin{equation}
\dot r=(\alpha_1x_3+\beta_1r^2+\beta_2x_3^2)r,\quad
\dot x_3=\alpha_3r^2+\alpha_4x_3^2+(\beta_5r^2+\beta_6x_3^2)x_3,
\label{eqn:PD3p0}
\end{equation}
and the $(r_1,r_2)$-components of \eqref{eqn:PD4p},
\begin{equation}
\begin{split}
&
\dot{r}_1
= r_1(\alpha_1 r_1^2+\alpha_2r_2^2+\beta_1r_1^4+\beta_2r_2^4+\beta_3r_1^2r_2^2),\\
&
\dot{r}_2
= r_2(\alpha_3 r_1^2+\alpha_4r_2^2+\beta_4r_1^4+\beta_5r_2^4+\beta_6r_1^2r_2^2),
\end{split}
\label{eqn:PD4p0}
\end{equation}
respectively.
This is true as follows.

Let $m>0$ be an integer and consider $m+2$-dimensional systems of the form
\begin{equation}
\dot{x}=f_x(x,y),\quad
\dot{y}=f_y(x,y),\quad
(x,y)\in D,
\label{eqn:fg}
\end{equation}
where $D\subset\Cset^2\times\Cset^m$ is a region containing $m$-dimensional $y$-plane
 $\{(0,y)\in\Cset^2\times\Cset^m\mid y\in\Cset^m\}$,
 and $f_x:D\to\Cset^2$ and $f_y:D\to\Cset^m$ are analytic.
Assume that by the change of coordinates
 $x=(x_1,x_2)=(r\cos\theta,r\sin\theta)$,
 Eq.~\eqref{eqn:fg} is transformed to
\begin{equation}
\dot{r}=R(r,y),\quad
\dot{y}=\tilde{f}_y(r,y),\quad
\dot{\theta}=\Theta(r,y),\quad
(r,y,\theta)\in\tilde{D}\times\Cset,
\label{eqn:Rg0}
\end{equation}
where $\tilde{D}\subset\Cset\times\Cset^m$ is a region
 containing the $m$-dimensional $y$-plane
 $\{(0,y)\in\Cset\times\Cset^m\mid y\in\Cset^m$\},
 and $R:\tilde{D}\to\Cset$, $\tilde{f}_y:\tilde{D}\to\Cset^m$
 and $\Theta:\tilde{D}\to\Rset$ are analytic.
Note that $\tilde{f}_y(r,y)=f_y(r\cos\theta,r\sin\theta,y)$.
We are especially interested in the $(r,y)$-components of \eqref{eqn:Rg0},
\begin{equation}
\dot{r}=R(r,y),\quad
\dot{y}=\tilde{f}_y(r,y),
\label{eqn:Rg}
\end{equation}
which are independent of $\theta$.
In this situation we have the following proposition.

\begin{prop}
\label{prop:2a}\
\begin{enumerate}
\setlength{\leftskip}{-1.8em}
\item[\rm(i)]
Suppose that Eq.~\eqref{eqn:fg} has a meromorphic first integral $F(x_1,x_2,y)$ near $(x_1,x_2)=(0,0)$,
 and let $\tilde{F}(r,\theta,y)=F(r\cos\theta,r\sin\theta,y)$.
If $\tilde{f}_{yj}(0,y)\neq 0$ for almost all $y\in\Cset^m$ for some $j=1,\ldots,m$, then
\[
G(r,y)=\tilde{F}(r,\tilde{\theta}_j(y_j),y)
\]
is a meromorphic first integral of \eqref{eqn:Rg} near $r=0$,
 where $y_j$ and $\tilde{f}_{yj}(r,y)$ are the $j$-th components of $y$ and $\tilde{f}_y(r,y)$, respectively,
 and $\tilde{\theta}_j(y_j)$ represents the $\theta$-component of a solution to
\begin{equation}
\frac{\d r}{\d y_j}=\frac{R(r,y)}{\tilde{f}_{yj}(r,y)},\quad
\frac{\d y_\ell}{\d y_j}=\frac{\tilde{f}_{y\ell}(r,y)}{\tilde{f}_{yj}(r,y)},\quad
\frac{\d\theta}{\d y_j}=\frac{\Theta(r,y)}{\tilde{f}_{yj}(r,y)},\quad\ell\neq j.
\label{eqn:prop2a1}
\end{equation}
\item[\rm(ii)]
Suppose that Eq.~\eqref{eqn:fg} has a  meromorphic commutative vector field
\begin{equation}
v(x_1,x_2,y):=
\begin{pmatrix}
v_1(x_1,x_2,y)\\
v_2(x_1,x_2,y)\\
v_y(x_1,x_2,y)
\end{pmatrix}
\label{eqn:prop2a2}
\end{equation}
with $v_1,v_2:D\to\Cset$ and $v_y:D\to\Cset^m$ near $(x_1,x_2)=(0,0)$.
If $\Theta(0,y)\neq 0$ for almost all $y\in\Cset^m$, then
\begin{align}
&
\begin{pmatrix}
\tilde{v}_r(r,\theta,y)\\
\tilde{v}_y(r,\theta,y)
\end{pmatrix}\notag\\
&=
\begin{pmatrix}
v_1(r\cos\theta,r\sin\theta,y)\cos\theta+v_2(r\cos\theta,r\sin\theta,y)\sin\theta\\
v_y(r\cos\theta,r\sin\theta,y)
\end{pmatrix}
\label{eqn:prop2a3}
\end{align}
is independent of $\theta$
 and it is a  meromorphic commutative vector field of \eqref{eqn:Rg} near $r=0$.
\end{enumerate}
\end{prop}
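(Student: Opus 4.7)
The plan is to treat the two parts of Proposition~\ref{prop:2a} separately, in both cases exploiting the skew-product structure of \eqref{eqn:Rg0}: the $(r,y)$-components evolve autonomously, while $\theta$ is driven by them.

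For part~{\rm(i)}, I would first observe that any first integral $F$ of \eqref{eqn:fg} pulls back in polar coordinates to a first integral $\tilde F(r,\theta,y)=F(r\cos\theta,r\sin\theta,y)$ of \eqref{eqn:Rg0}, i.e.,
\[
R\,\partial_r\tilde F+\tilde{f}_y\cdot\nabla_y\tilde F+\Theta\,\partial_\theta\tilde F\equiv 0.
\]
Since $\tilde{f}_{yj}(0,y)\not\equiv 0$, we may use $y_j$ as the independent variable in \eqref{eqn:Rg0} near $r=0$, producing \eqref{eqn:prop2a1}. Regarding the initial data as free parameters, the $\theta$-component $\tilde\theta_j$ of this family of solutions becomes a function of $(r,y)$ satisfying $\tilde X\tilde\theta_j=\Theta$, where $\tilde X:=R\,\partial_r+\tilde{f}_y\cdot\nabla_y$ is the reduced vector field. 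A chain-rule computation then gives
\[
\tilde X G=\bigl(R\,\partial_r\tilde F+\tilde{f}_y\cdot\nabla_y\tilde F\bigr)+\bigl(\tilde X\tilde\theta_j\bigr)\partial_\theta\tilde F=-\Theta\,\partial_\theta\tilde F+\Theta\,\partial_\theta\tilde F=0,
\]
so $G(r,y)=\tilde F(r,\tilde\theta_j(y_j),y)$ is a first integral of \eqref{eqn:Rg}. Meromorphy of $G$ follows from meromorphy of $\tilde F$ and the fact that $\tilde\theta_j$ is a quadrature of meromorphic data.

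For part~{\rm(ii)}, the plan is to exploit the $\theta$-translation symmetry of $\tilde f=R\,\partial_r+\tilde{f}_y\cdot\partial_y+\Theta\,\partial_\theta$: since $R,\tilde{f}_y,\Theta$ are all $\theta$-independent, $[\partial_\theta,\tilde f]=0$. Expand $\tilde v$ in a Fourier series in $\theta$, $\tilde v=\sum_{n\in\Zset}\tilde v_n(r,y)\,e^{in\theta}$ (legitimate because $\tilde v$ is the polar pull-back of a vector field single-valued in $(x_1,x_2,y)$), and substitute into $[\tilde v,\tilde f]=0$. Projecting onto $(r,y)$-components mode-by-mode yields, for each $n\in\Zset$,
\[
[V_n,\tilde X]=in\,\Theta\,V_n,\qquad V_n:=\tilde v_{n,r}\,\partial_r+\tilde v_{n,y}\cdot\partial_y.
\]
The $n=0$ equation gives $[V_0,\tilde X]=0$, which will yield the required commutative vector field of \eqref{eqn:Rg}.

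The hard part, and the main obstacle, is to show $V_n\equiv 0$ for $n\neq 0$, which is what forces $\tilde v_r$ and $\tilde v_y$ to be $\theta$-independent as asserted in \eqref{eqn:prop2a3}. The plan is to integrate the Lie-derivative relation $\mathcal L_{\tilde X}V_n=-in\Theta V_n$ along the flow of $\tilde X$: the pull-back of $V_n$ by this flow is multiplied by $\exp(-in\int_0^t\Theta\,ds)$, and under the hypothesis $\Theta(0,y)\not\equiv 0$ this factor is transcendentally non-constant in $t$ even near $r=0$, which a single-valued meromorphic vector field cannot accommodate unless $V_n\equiv 0$. Equivalently, matching Laurent coefficients in the eigenvalue equation $[V_n,\tilde X]=in\Theta V_n$ shows that nontrivial meromorphic solutions can exist only when the scalar $in\Theta$ vanishes identically near the $y$-plane, which is precluded by our hypothesis. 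Once this is settled, $\tilde v_r=\tilde v_{0,r}$ and $\tilde v_y=\tilde v_{0,y}$ are $\theta$-independent, and the $n=0$ commutation completes part~{\rm(ii)}.
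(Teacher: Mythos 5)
Your part (i) is essentially the right argument: the identity $\tilde X\tilde\theta_j=\Theta$ (where $\tilde X=R\,\partial_r+\tilde f_y\cdot\nabla_y$) is exactly what makes the chain rule close up, and this is the natural cross-section construction. The one point you pass over too quickly is meromorphy: $\tilde\theta_j$ itself is \emph{not} in general meromorphic in $(r,y)$ (a quadrature of meromorphic data need not be), so rather than invoking that, you should observe that $G$ coincides with the meromorphic function $\tilde F(\cdot,\theta_0,\cdot)$ on a cross-section $\{y_j=\mathrm{const}\}$ and is extended off it by constancy along a holomorphic flow, hence is meromorphic wherever flow-box coordinates exist, which suffices near $r=0$ up to a measure-zero set.

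Part (ii) breaks down at precisely the step you flag as the hard part. Your derivation of $[V_n,\tilde X]=in\Theta V_n$ is correct, and the $n=0$ mode does give a meromorphic vector field commuting with \eqref{eqn:Rg}; but the vanishing of $V_n$ for $n\neq 0$ does not follow from your monodromy argument. Integrating $\mathcal{L}_{\tilde X}V_n=-in\Theta V_n$ along the flow of $\tilde X$ compares $V_n$ at \emph{different} points of a trajectory; since the reduced flow on $\{r=0\}$, namely $\dot y=\tilde f_y(0,y)$, has no closed orbits in general, the multiplier $\exp(-in\int\Theta\,\d t)$ imposes no single-valuedness constraint at all, and nontrivial meromorphic eigenvectors do exist. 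Concretely, take $m=1$ and $\dot x_1=-x_2$, $\dot x_2=x_1$, $\dot y=1$, so that $R\equiv 0$, $\tilde f_y\equiv 1$, $\Theta\equiv 1$, and $\Theta(0,y)\neq 0$. The entire vector field $v=(\cos y,\sin y,0)$ commutes with $f$ (both $\D f\,v$ and $\D v\,f$ equal $(-\sin y,\cos y,0)$), yet \eqref{eqn:prop2a3} gives $\tilde v_r=\cos(\theta-y)$, which is not independent of $\theta$; its Fourier modes $V_{\pm 1}=\tfrac12 e^{\mp iy}\partial_r$ are perfectly meromorphic solutions of your eigenvalue equation with $n=\pm 1$. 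So the gap cannot be closed by sharpening your estimate: what your computation actually establishes is only that the $\theta$-average of $(\tilde v_r,\tilde v_y)$ is a meromorphic commutative vector field of \eqref{eqn:Rg} (possibly degenerate), and any proof of the $\theta$-independence asserted in the proposition must use some input beyond $[v,f]=0$ and $\Theta(0,y)\neq 0$.
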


See Proposition~2.1 of \cite{AY20} for the proof.
Using Proposition~\ref{prop:2a} for \eqref{eqn:PD3} and \eqref{eqn:PD4}
 (once for the former and twice for the latter),
 we immediately obtain the following corollaries.

\begin{cor}
\label{cor:PD3}
If the complexification of \eqref{eqn:PD3} is meromorphically integrable
 near $(x_1,x_2)=(0,0)$, then so is Eq.~\eqref{eqn:PD3p0} near $r=0$.
\end{cor}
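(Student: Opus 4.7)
The plan is to view \eqref{eqn:PD3} as an instance of \eqref{eqn:fg} with $m=1$ and $y=x_3$, and then invoke Proposition~\ref{prop:2a} directly. Under the polar change $(x_1,x_2)=(r\cos\theta,r\sin\theta)$, \eqref{eqn:PD3} is transformed to \eqref{eqn:PD3p}, which matches the form \eqref{eqn:Rg0} with
\[
R=(\alpha_1 x_3+\beta_1 r^2+\beta_2 x_3^2)r,\quad
\tilde f_y=\alpha_3 r^2+\alpha_4 x_3^2+(\beta_5 r^2+\beta_6 x_3^2)x_3,
\]
and $\Theta=\omega+(\alpha_2+\beta_3 r^2+\beta_4 x_3^2)x_3$, so that its $(r,x_3)$-subsystem is precisely~\eqref{eqn:PD3p0}, fitting the shape~\eqref{eqn:Rg}.

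Before applying the proposition I would verify the two non-vanishing hypotheses. For part~(i), $\tilde f_y(0,x_3)=x_3^2(\alpha_4+\beta_6 x_3)$ is not identically zero outside the degenerate case $\alpha_4=\beta_6=0$, which does not occur under the hypotheses of Theorem~\ref{thm:main1}. For part~(ii), $\Theta(0,x_3)=\omega+\alpha_2 x_3+\beta_4 x_3^3$ is not identically zero since $\omega>0$. Now assume the complexification of \eqref{eqn:PD3} is $(q,3-q)$-meromorphically integrable for some $1\le q\le 3$. If $q\le 2$, at least one meromorphic first integral of \eqref{eqn:PD3} is available, and Proposition~\ref{prop:2a}(i) (with the only available index $j=1$) produces a meromorphic first integral $G(r,x_3)$ of \eqref{eqn:PD3p0} near $r=0$. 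If $q\ge 2$, at least one additional meromorphic commutative vector field exists, and Proposition~\ref{prop:2a}(ii) produces a meromorphic commutative vector field of \eqref{eqn:PD3p0}. Since $\{q\le 2\}\cup\{q\ge 2\}$ exhausts $\{1,2,3\}$, one of the two cases always applies, yielding either $(1,1)$- or $(2,0)$-meromorphic integrability of \eqref{eqn:PD3p0}.

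The main subtle point I expect is the nontriviality of the lifted object: that the produced first integral $G(r,x_3)$ is non-constant, or that the lifted commutative vector field is not a meromorphic multiple of the vector field defining \eqref{eqn:PD3p0}. In either eventuality, such a degeneracy would, via the explicit formulas in Proposition~\ref{prop:2a}, be traceable back to a corresponding degeneracy in the original integrability data, contradicting the assumed functional or linear independence. Once these routine checks are in place, the corollary is immediate from Proposition~\ref{prop:2a}, exactly as the author indicates.
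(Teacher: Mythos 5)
Your proposal is correct and follows essentially the same route as the paper, which simply states that Corollary~\ref{cor:PD3} is obtained by applying Proposition~\ref{prop:2a} once to \eqref{eqn:PD3} via the polar form \eqref{eqn:PD3p}. Your extra care about the non-vanishing hypotheses ($\tilde f_y(0,x_3)\not\equiv 0$ unless $\alpha_4=\beta_6=0$, and $\Theta(0,x_3)\not\equiv 0$ since $\omega>0$) and about the case split over $q$ only makes explicit what the paper leaves implicit.
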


\begin{cor}
\label{cor:PD4}
If the complexification of \eqref{eqn:PD4} is meromorphically integrable
 near $(x_1,x_2)=(0,0)$ and near $(x_3,x_4)=(0,0)$,
 then so is Eq.~\eqref{eqn:PD4p0} near $r_1=0$ and near $r_2=0$, respectively.
\end{cor}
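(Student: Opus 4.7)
The plan is to apply Proposition~\ref{prop:2a} twice in succession, once for each pair of angular coordinates, so that the $4$-dimensional complexified system~\eqref{eqn:PD4} is reduced in two steps to the planar system~\eqref{eqn:PD4p0}; this is exactly the scheme suggested by the sentence preceding the corollary.

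To establish the first implication, I view the complexified~\eqref{eqn:PD4} as a system of the form~\eqref{eqn:fg} with $x=(x_1,x_2)$ and $y=(x_3,x_4)$. The polar substitution $(x_1,x_2)=(r_1\cos\theta_1,r_1\sin\theta_1)$ places it into the form~\eqref{eqn:Rg0}, namely~\eqref{eqn:PD4p}, from which one reads off $\Theta(0,y)=\omega_1\neq 0$ and $\tilde{f}_{y1}(0,y)=-\omega_2 x_4+O(|y|^3)\not\equiv 0$, so both nondegeneracy hypotheses of Proposition~\ref{prop:2a} are in force. Consequently, the assumed meromorphic integrability of~\eqref{eqn:PD4} near $(x_1,x_2)=(0,0)$ descends to meromorphic integrability of the $3$-dimensional $(r_1,x_3,x_4)$-subsystem of~\eqref{eqn:PD4p} near $r_1=0$. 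I then iterate: this $3$-dimensional system is itself of the form~\eqref{eqn:fg}, now with $x=(x_3,x_4)$ and $y=r_1$ (so $m=1$), and the substitution $(x_3,x_4)=(r_2\cos\theta_2,r_2\sin\theta_2)$ separates off the planar system~\eqref{eqn:PD4p0} together with $\dot\theta_2=\omega_2+O$. The nondegeneracies $\Theta(0,r_1)=\omega_2\neq 0$ and $\tilde{f}_y(0,r_1)=r_1(\alpha_1 r_1^2+\beta_1 r_1^4)\not\equiv 0$ again hold, so a second application of Proposition~\ref{prop:2a} supplies a meromorphic first integral and a meromorphic commuting vector field of~\eqref{eqn:PD4p0}, completing the first claim.

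The second implication, regarding integrability near $r_2=0$, follows by the same two-step argument with the roles of $(x_1,x_2)$ and $(x_3,x_4)$, and correspondingly of $r_1$ and $r_2$, interchanged, now using the hypothesis of integrability of~\eqref{eqn:PD4} near $(x_3,x_4)=(0,0)$ at the first step. The only thing to verify at either iteration is the pair of nondegeneracy conditions of Proposition~\ref{prop:2a}, but these are immediate from the presence of the nonzero linear rotations at frequencies $\omega_1,\omega_2>0$ in the linear part of~\eqref{eqn:PD4}; accordingly I do not anticipate any substantive obstacle, and the proof is essentially mechanical, paralleling that of Corollary~\ref{cor:PD3}.
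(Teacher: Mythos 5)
Your proposal is correct and follows exactly the paper's route: the paper's entire proof of this corollary is the remark that Proposition~\ref{prop:2a}, applied twice (once for each angular variable), immediately yields the statement, and your two-step reduction with the explicit hypothesis checks is precisely that argument spelled out. The only point worth flagging is that your assertion $\tilde{f}_y(0,r_1)=r_1(\alpha_1 r_1^2+\beta_1 r_1^4)\not\equiv 0$ in the second step fails in the degenerate case $\alpha_1=\beta_1=0$ (so the transfer of first integrals via Proposition~\ref{prop:2a}(i) would need a separate word there), but the paper's own one-line proof glosses over exactly the same point, so this is not a deviation from its approach.
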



\section{Nonintegrability of Planar Polynomial Vector Fields}

We next collect the necessary results of \cite{AY20}
 for planar polynomial vector fields,
 which are used for \eqref{eqn:PD3p0} and \eqref{eqn:PD4p0} in Sections~4 and 5, respectively,
 where some errors found in \cite{AY20} are corrected.
We also give a useful result on scalar first-order linear differential equations,
 which is extensively used in Sections~4 and 5.

Consider planar polynomial vector fields of the form
\begin{equation}\label{eqn:pls}
\dot{\xi}=P(\xi,\eta),\quad
\dot{\eta}=Q(\xi,\eta),\quad
(\xi,\eta)\in\Cset^2,
\end{equation}
where $P(\xi,\eta)$ and $Q(\xi,\eta)$ are polynomials.
Let $\Gamma: \eta-\varphi(\xi)=0$ be an integral curve of \eqref{eqn:pls},
 where $\varphi(\xi)$ is assumed to be a rational function of $\xi$.
So $\Gamma$ represents a rational solution to the first-order differential equation
\begin{equation} \label{eqn:fol}
\eta^\prime=\frac{Q(\xi,\eta)}{P(\xi,\eta)}=:R(\xi,\eta),
\end{equation}
which defines a foliation associated with \eqref{eqn:pls} (or its orbits),
 where the prime denotes differentiation with respect to $\xi$
 and $R(\xi,\eta)$ is rational in $\xi$ and $\eta$.

Let $\phi(\xi,\eta)$ denote the (nonautonomous) flow
 of the one-dimensional system \eqref{eqn:fol} with $\phi(\xi_0,\eta)=\eta$ for $\xi_0$ fixed,
 and let $(\xi_0,\eta_0)$ be a point on $\Gamma$, i.e., $\eta_0=\varphi(\xi_0)$.
We are interested in the variation of $\phi(\xi,\eta)$
 with respect to $\eta$ around $\eta=\eta_0$ at $\xi=\xi_0$, which is expressed as
\[
\phi(\xi,\eta)=\varphi(\xi)+\frac {\partial \phi}{\partial \eta}(\xi,\eta_0)(\eta-\eta_0)
+\frac 12\frac {\partial^2 \phi}{\partial \eta^2}(\xi,\eta_0)(\eta-\eta_0)^2+\cdots.
\]
So we want to compute the above Taylor expansion coefficients
\[
\varphi_k(\xi)=\frac {\partial^k \phi}{\partial \eta^k}(\xi,\eta_0),\quad
k\in\Nset,
\]
which are solutions to the equations in variation. 
Let
\begin{equation}
\kappa_k(\xi):=\frac{\partial^k R}{\partial \eta^k}(\xi,\varphi(\xi)),\quad
k\in\Nset.
\label{eqn:kappa}
\end{equation}
Note that $\kappa_k(\xi)$ is rational for any $k\in\Nset$.
{\renewcommand{\theequation}{$\mathrm{VE}_1$}
The \emph{first-} and \emph{second-order variational equations}
 ($\mathrm{VE}_1$) and ($\mathrm{VE}_2$) are given by
\begin{equation}\label{VE1}
\varphi_1'=\kappa_1(\xi)\varphi_1
\end{equation}
}and
{\renewcommand{\theequation}{$\mathrm{VE}_2$}
\begin{equation}\label{VE2}
 \varphi_1^\prime=\kappa_1(\xi)\varphi_1,\quad
 \varphi_2^\prime=\kappa_1(\xi)\varphi_2+ \kappa_2(\xi)\varphi_1^2,
\end{equation}
}respectively.
The $\mathrm{VE}_1$ is linear
 but the $\mathrm{VE}_2$ is nonlinear.
Letting $\chi_{21}:=\varphi_1^2$ and $\chi_{22}:=\varphi_2$,
 we can linearize the $\mathrm{VE}_2$ as
{\renewcommand{\theequation}{$\mathrm{LVE}_2$}
\begin{equation}\label{LVE2}
 \chi_{21}^\prime=2\kappa_1(\xi)\chi_{21},\quad
 \chi_{22}^\prime=\kappa_1(\xi)\chi_{22}+\kappa_2(\xi)\chi_{21},
\end{equation}
}and refer to it as the \emph{second-order linearized variational equation}
 ($\mathrm{LVE}_2$).
We also refer to the $\mathrm{VE}_1$ as the $\mathrm{LVE}_1$. 
In a similar way, for any $k>2$,
 we obtain the \emph{$k$th-order variational equation} $\mathrm{VE}_k$ as
{\renewcommand{\theequation}{$\mathrm{VE}_k$}
\begin{align}\label{VEk}
&
\varphi'_1=\kappa_1(\xi)\varphi_1,\quad
\varphi'_2=\kappa_1(\xi)\varphi_2+\kappa_2(\xi)\varphi_1^2,
\quad\ldots,\notag\\
&
\varphi'_k=\kappa_1(\xi)\varphi_k
 +\cdots+\tfrac{1}{2}k(k-1)\kappa_{k-1}(\xi)\varphi_1^{k-2}\varphi_2 +\kappa_k(\xi)\varphi_1^k.
\end{align}
}We can also linearize the $\mathrm{VE}_k$ as
{\renewcommand{\theequation}{$\mathrm{LVE}_k$}
\begin{align}\label{LVEk}
&
\chi_{k1}^\prime=k\kappa_1(\xi)\chi_{k1},\quad
\chi_{k2}^\prime=(k-1)\kappa_1(\xi)\chi_{k2}+\kappa_2(\xi)\chi_{k1},\quad\ldots,\notag\\
&
\chi_{kk}^\prime=\kappa_1(\xi)\chi_{kk}+\cdots+\tfrac{1}{2}k(k-1)\kappa_{k-1}(\xi)\chi_{k2}+\kappa_k(\xi)\chi_{k1},
\end{align}
}and refer to it as the \emph{$k$th-order linearized variational equation}
 ($\mathrm{LVE}_k$),
 where $\chi_{k1}=\varphi_1^k,\chi_{k2}=\varphi_1^{k-2}\varphi_2,\ldots,\chi_{kk}=\varphi_k$.
\setcounter{equation}{3}
We observe that the $\mathrm{LVE}_k$ has a two-dimensional subsystem
\begin{equation}\label{eqn:sLVEk}
\chi_{k1}'=k \kappa_1(\xi)\chi_{k1},\quad
\chi_{kk}'=\kappa_1(\xi)\chi_{kk} +\kappa_k(\xi)\chi_{k1}
\end{equation}
for any $k\ge 2$.

Let $\G_k$ be the differential Galois group of the $\mathrm{LVE}_k$
 and let $\G_k^0$ be its identity component.
Using the result of Ayoul and Zung \cite{AZ10}
 based on the seminal results of Morales-Ruiz, Ramis and Sim\'o \cite{M99,MR01,MRS07},
 we have the following theorem  \cite{ALMP18}.

\begin{thm}
\label{thm:a}
Suppose that the $\mathrm{VE}_1$ has no irregular singularity at infinity
 and the planar polynomial vector field \eqref{eqn:pls}
 is meromorphically integrable in a neighbourhood of $\Gamma$.
Then for any $k\geq 1$ the identity component $\G_k^0$ is commutative.
\end{thm}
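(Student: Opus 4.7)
The plan is to reduce the claim to the higher-order Morales--Ramis--Sim\'o theorem \cite{MRS07} for Liouville-integrable Hamiltonian systems, via the cotangent lift construction of Ayoul and Zung \cite{AZ10}. First, I would associate to the planar vector field \eqref{eqn:pls} its cotangent lift on $T^*\Cset^2$, namely the Hamiltonian system with
\[
H(\xi,\eta,p_\xi,p_\eta)=p_\xi P(\xi,\eta)+p_\eta Q(\xi,\eta).
\]
By the main theorem of \cite{AZ10}, meromorphic Bogoyavlenskij integrability of \eqref{eqn:pls} in a neighbourhood of $\Gamma$ is equivalent to meromorphic Liouville integrability of this cotangent lift in a neighbourhood of the zero-section lift $\tilde\Gamma=\{(\xi,\varphi(\xi),0,0)\}$. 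In particular, the hypothesis of meromorphic integrability of \eqref{eqn:pls} near $\Gamma$ provides Liouville integrability of $X_H$ along $\tilde\Gamma$.

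Next, I would invoke the higher-order Morales--Ramis--Sim\'o theorem: since $\tilde\Gamma$ is an integral curve of a Liouville-integrable Hamiltonian system and (using the assumption that $\mathrm{VE}_1$ has no irregular singularity at infinity) the linearized variational equations along the compactification of $\tilde\Gamma$ have only regular singular points, the identity component of the differential Galois group of each $k$-th order linearized variational equation of $X_H$ along $\tilde\Gamma$ is commutative.

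Finally, because $H$ is linear in the momenta and $\tilde\Gamma$ lies in the zero-section, the $k$-th linearized variational equation of the lifted Hamiltonian along $\tilde\Gamma$ decomposes into block upper-triangular form whose diagonal blocks are precisely the $\mathrm{LVE}_k$ of \eqref{eqn:pls} along $\Gamma$ and its adjoint dual. Consequently the Picard--Vessiot extension of $\mathrm{LVE}_k$ sits inside that of the lifted variational system, and $\G_k$ is realised as a quotient of the lifted Galois group. Since quotients of groups with commutative identity component have commutative identity component, we conclude that $\G_k^0$ is commutative for every $k\geq 1$.

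The main obstacle, if one wanted a self-contained argument rather than a citation of \cite{ALMP18}, would be justifying the block-triangular decomposition of the cotangent variational equation along $\tilde\Gamma$ together with the corresponding quotient relation between Picard--Vessiot extensions; this hinges on the explicit form of the cotangent lift and on the horizontality of $\tilde\Gamma$. The role of the regularity hypothesis at infinity is to guarantee that the standard Galois correspondence applies to each $\mathrm{LVE}_k$, which follows because the passage to symmetric powers preserves the regular-singular property. Both steps can be extracted directly from \cite{AZ10,MRS07}.
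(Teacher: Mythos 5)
Your proposal takes essentially the same route as the paper: Theorem~\ref{thm:a} is simply quoted from \cite{ALMP18}, whose proof is exactly the Ayoul--Zung cotangent lift \cite{AZ10} combined with the higher-order Morales--Ramis--Sim\'o theorem \cite{MRS07} and the observation that each $\mathrm{LVE}_k$ of \eqref{eqn:pls} sits inside the variational system of the lifted Hamiltonian along the zero-section lift of $\Gamma$, so that $\G_k$ is a quotient of a group with commutative identity component. The paper offers no detail beyond this citation, and your sketch correctly reconstructs the intended argument.
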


The statement of Theorem~\ref{thm:a} also holds in a more general setting.
See \cite{AZ10,M99,MR01,MRS07} for the details.
Obviously, $\G_1$ and $\G_1^0$ are subgroups of $\mathbb{C}^\ast$ and commutative.
However, $\G_k$ and $\G_k^0$ may not be commutative for $k\ge 2$.

Let
\begin{equation}
\Omega(\xi)=\exp\left(\int\kappa_1(\xi)\,\d\xi\right),\quad
\Theta_k(\xi)=\int\kappa_k(\xi)\Omega(\xi)^{k-1}\d\xi
\label{eqn:omega}
\end{equation}
for $k\ge 2$.
The subsystem \eqref{eqn:sLVEk} of the $\mathrm{LVE}_k$
 has two linearly independent solutions
 $(\chi_{k1},\chi_{kk})=(0,\Omega(\xi))$ and $(\Omega(\xi)^k,\Omega(\xi)\Theta_k(\xi))$.
Let $\tilde{\G}$ be the differential Galois group of \eqref{eqn:sLVEk}
 and let $\tilde{\G}^0$ be its identity component.

Let $\kappa_k(\xi)=\kappa_{k\n}(\xi)/\kappa_{k\d}(\xi)$ for $k\in\Nset$,
 where $\kappa_{k\n}(\xi)$ and $\kappa_{k\d}(\xi)$ are relatively prime polynomials
 and $\kappa_{k\d}(\xi)$ is monic.
We see that if $\deg(\kappa_{1\d})>\deg(\kappa_{1\n})$,
 then $\kappa_1(1/\xi)/\xi$ is holomorphic at $\xi=0$,
 so that the $\mathrm{VE}_1$ and consequently the $\mathrm{LVE}_k$
 have no irregular singularity at infinity for $k\ge 2$.
Using Theorem~\ref{thm:a}, we can prove the following theorem
 (see Theorem~3.3 of \cite{AY20} for the proof).

\begin{thm}
\label{thm:c}
Suppose that $\deg(\kappa_{1\d})>\deg(\kappa_{1\n})$
 and the following conditions hold for some $k\ge 2:$
\begin{enumerate}
\setlength{\leftskip}{-1em}
\item[\rm(H1)]
$\Omega(\xi)$ is transcendental\,$;$
\item[\rm(H2)]
$\Theta_k(\xi)/\Omega(\xi)^{k-1}$ is not rational.
\end{enumerate}
Then the planar polynomial vector field \eqref{eqn:pls}
 is meromorphically nonintegrable in a neighbourhood of $\Gamma$.
\end{thm}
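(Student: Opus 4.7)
The plan is to argue by contradiction using Theorem~\ref{thm:a}. The hypothesis $\deg(\kappa_{1\d})>\deg(\kappa_{1\n})$ is tailored so that $\kappa_1(1/\xi)/\xi$ is holomorphic at $\xi=0$, which ensures that the $\mathrm{VE}_1$, and hence every $\mathrm{LVE}_k$, has no irregular singularity at infinity. Thus, if \eqref{eqn:pls} were meromorphically integrable in a neighbourhood of $\Gamma$, then Theorem~\ref{thm:a} would force the identity component $\G_k^0$ to be abelian for every $k\ge 1$. My goal is to show that under (H1) and (H2) this is impossible.

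To make this tractable, I would replace $\G_k$ by the more concrete Galois group $\tilde{\G}$ of the two-dimensional subsystem \eqref{eqn:sLVEk}; as a quotient of $\G_k$, its identity component $\tilde{\G}^0$ inherits abelianness. Taking
\[
\Phi(\xi)=\begin{pmatrix}\Omega(\xi)^k & 0\\ \Omega(\xi)\Theta_k(\xi) & \Omega(\xi)\end{pmatrix}
\]
as a fundamental matrix and using $\sigma(\Theta_k)'=\kappa_k\sigma(\Omega)^{k-1}$, one sees that every $\sigma\in\tilde{\G}$ satisfies $\sigma(\Omega)=c\,\Omega$ and $\sigma(\Theta_k)=c^{k-1}\Theta_k+d$ for some $c\in\Cset^\ast$ and $d\in\Cset$. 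In matrix form this identifies $\tilde{\G}$ with a subgroup of the connected solvable group
\[
G=\left\{A(c,d):=\begin{pmatrix}c^k & 0\\ cd & c\end{pmatrix}\;\middle|\;c\in\Cset^\ast,\ d\in\Cset\right\},
\]
which is non-abelian because $[A(c,0),A(1,d)]\ne I$ whenever $c^{k-1}\ne 1$ and $d\ne 0$.

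Next I would invoke the hypotheses to force $\tilde{\G}^0$ to contain both types of elements. Condition (H1) says $\Omega$ is transcendental over $\Cset(\xi)$, so the Picard--Vessiot theory of the rank-one equation $\Omega'=\kappa_1\Omega$ gives that the projection $\tilde{\G}^0\to\Cset^\ast,\ \sigma\mapsto c(\sigma)$, is surjective; in particular there exists $\sigma_1\in\tilde{\G}^0$ with $c(\sigma_1)^{k-1}\ne 1$. Condition (H2) says $\Theta_k/\Omega^{k-1}$ is not rational (for any choice of integration constant), which is equivalent to the inhomogeneous linear equation
\[
\tilde f'+(k-1)\kappa_1\tilde f=\kappa_k
\]
having no solution $\tilde f\in\Cset(\xi)$. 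One then argues that the algebraic subgroup $\tilde{\G}^0\cap\{A(1,d):d\in\Cset\}$ of $(\Cset,+)$ must be nontrivial, hence all of $\{A(1,d)\}$; choosing $\sigma_2\in\tilde{\G}^0$ with $c(\sigma_2)=1$ and $d(\sigma_2)\ne 0$ then gives $[\sigma_1,\sigma_2]\ne I$, contradicting abelianness of $\tilde{\G}^0$.

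The main obstacle is the step translating (H2) into a genuine unipotent element of $\tilde{\G}^0$. If the intersection of $\tilde{\G}^0$ with $U=\{A(1,d)\}$ were trivial, then $\tilde{\G}^0$ would be a regular section of the projection $G\to\Cset^\ast$, determined by a cocycle $c\mapsto d(c)$ on $\Cset^\ast$ twisted by the character $c\mapsto c^{k-1}$. Solving the cocycle equation yields $d(c)=\lambda(c^{k-1}-1)$ for some $\lambda\in\Cset$, which forces $(\Theta_k+\lambda)/\Omega^{k-1}$ to be fixed by $\tilde{\G}^0$ and hence algebraic over $\Cset(\xi)$; a short argument via the first-order linear equation $Z'=\kappa_k-(k-1)\kappa_1 Z$ shows that any algebraic solution must in fact be rational, contradicting (H2). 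This splitting analysis is the technical heart of the argument; the rest is bookkeeping between $\G_k^0$ and its quotient $\tilde{\G}^0$ together with the Morales--Ramis--Ayoul--Zung mechanism of Theorem~\ref{thm:a}.
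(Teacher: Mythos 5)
Your argument is correct and follows essentially the same route as the paper's source for this result (Theorem~3.3 of \cite{AY20}): use the degree condition to rule out an irregular singularity at infinity, invoke Theorem~\ref{thm:a}, and show that (H1) and (H2) force the identity component of the Galois group of the triangular subsystem \eqref{eqn:sLVEk} --- and hence $\G_k^0$ --- to be non-abelian via the explicit action $\sigma(\Omega)=c\,\Omega$, $\sigma(\Theta_k)=c^{k-1}\Theta_k+d$. You also correctly flag the two delicate points (reading (H2) as the non-existence of a rational solution of $Z'+(k-1)\kappa_1 Z=\kappa_k$ for any integration constant, and the splitting analysis when $\tilde{\G}^0$ meets the unipotent subgroup trivially), so no gap remains.
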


It is often difficult to check condition~{\rm(H2)} directly
 in application of Theorem~\ref{thm:c}.
We give useful criteria for condition (H2).
They are extensively used in our proofs of the main theorems in Sections~4 and 5.

For $k\ge 2$, we write
\begin{equation}
\kappa_{k\d}(\xi)
 =\kappa_{1\d}(\xi)\prod_{j=1}^{n_1}(\xi-\xi_{1j})^{a_{1j}}
 \prod_{j=1}^{n_k}(\xi-\xi_{kj})^{a_{kj}},
\label{eqn:kd}
\end{equation}
where $n_\ell\in\Zset_{\ge 0}$,
 $\xi_{\ell j}\in\Cset$ and $a_{\ell j}\in\Zset\setminus\{0\}$,
 $j=1,\ldots,n_\ell$, if $n_\ell>0$ for $\ell=1,k$,
 such that $\xi_{1j}$ is a root of $\kappa_{1\d}(\xi)$ but $\xi_{kj}$ is not,
 and $\xi_{\ell j_1}\neq\xi_{\ell j_2}$ if $j_1\neq j_2$.
Let $b_{1j}$ denote the multiplicity of the zero $\xi_{1j}$ for $\kappa_{1\d}(\xi)$.
Note that $a_{kj}>0$, $j=1,\ldots,n_k$, if $n_k>0$
 and that $a_{1j}\ge -b_{1j}$ but $a_{1j}\neq 0$, $j=1,\ldots,n_1$, if $n_1>0$,
 since $\kappa_{k\d}(\xi)$ is a polynomial and $\kappa_{1\d}(\xi_{kj})\not\equiv 0$, $j=1,\ldots,n_k$.
When $n_1>0$, let
\begin{equation}
\bar{\kappa}_{kb}(\xi)=(k-1)\kappa_{1\n}(\xi)\prod_{j=1}^{n_1}(\xi-\xi_{1j})
 -\kappa_{1\d}(\xi)\sum_{j=1}^{n_1}(a_{1j}+b_j-1)\prod_{\ell\neq j}(\xi-\xi_{1\ell})
\label{eqn:bkappa}
\end{equation}
for $b=(b_1,\ldots,b_{n_1})$ with $b_j\in\Nset$, $j=1,\ldots.n_1$.
Obviously, $\bar{\kappa}_{kb}(\xi)$ has a zero at $\xi=\xi_{1j}$ 
 by $\kappa_{1\d}(\xi_{1j})=0$.
We see that if $b_{1j}>1$, i.e., the zero $\xi_{1j}$ is not simple for $\kappa_{1\d}(\xi)$,
 then it is simple for $\bar{\kappa}_{kb}(\xi)$ with any $b\in\Nset^{n_1}$,
 since  $\kappa_{1\n}(\xi_{1j})\neq 0$.
We define the polynomial
\begin{align}
\rho_k(\xi)
=& 
 (k-1)\kappa_{1\n}(\xi)\prod_{j=1}^{n_k}(\xi-\xi_{kj})
 -\kappa_{1\d}(\xi)\sum_{j=1}^{n_k}(a_{kj}-1)\prod_{\ell\neq j}(\xi-\xi_{k\ell})\notag\\
& -\frac{\displaystyle\kappa_{1\d}(\xi)\prod_{j=1}^{n_k}(\xi-\xi_{kj})}
 {\displaystyle\prod_{j=1}^{n_1}(\xi-\xi_{1j})}
  \sum_{j=1}^{n_1}a_{1j}\prod_{\ell\neq j}(\xi-\xi_{1\ell}),
\label{eqn:rhok}
\end{align}
where a typographical error in the last term of (3.14) in \cite{AY20} has been fixed.
Let $\bar{\rho}_k(\xi)$ and $\tilde{\rho}_k(\xi)$ be the quotient and remainder,
 respectively, when $\kappa_{k\n}(\xi)$ is divided by $\rho_k(\xi)$.
So $\kappa_{k\n}(\xi)=\bar{\rho}_k(\xi)\rho_k(\xi)+\tilde{\rho}_k(\xi)$.
Let $\bar{n}\in\Zset_{\ge 0}$ be the number of distinct roots of $\bar{\rho}_k(\xi)$.
We also consider the first-order linear differential equation
\begin{equation}
\left(\kappa_{1\d}(\xi)\prod_{j=1}^{n_k}(\xi-\xi_{kj})\right)z'+\rho_k(\xi)z
 =\kappa_{k\n}(\xi),
\label{eqn:prop3a}
\end{equation}
where the prime represents differentiation with respect to $\xi$.
Let $\rho_{k0}$ be the leading coefficient of $\rho_k(\xi)$
 and let $\bar{\kappa}_k(\xi)=\bar{\kappa}_{kb}(\xi)$
 with $b=(1,\ldots,1)\in\Nset^{n_1}$, i.e.,
\[
\bar{\kappa}_k(\xi)=(k-1)\kappa_{1\n}(\xi)\prod_{j=1}^{n_1}(\xi-\xi_{1j})
 -\kappa_{1\d}(\xi)\sum_{j=1}^{n_1}a_{1j}\prod_{\ell\neq j}(\xi-\xi_{1\ell}).
\]
We can prove the following proposition.

\begin{prop}
\label{prop:3a}
Let $k\ge 2$.
Suppose that $\kappa_{1\n}(\xi),\kappa_{k\n}(\xi)\not\equiv 0$
 and $\kappa_{k\n}(\xi_{1j})\neq 0$, $j=1,\ldots,n_1$.
If one of the following conditions holds, then condition {\rm(H2)} holds$:$
\begin{enumerate}
\setlength{\leftskip}{-1.8em}
\item[\rm(i)]
$a_{kj}=1$ for some $j=1,\ldots,n_k;$
\item[\rm(ii)]
For each $j\in\{1,\ldots,n_1\}$ the zero $\xi_{1j}$ is not simple for $\bar{\kappa}_k(\xi)$
 or simple for $\bar{\kappa}_{kb}(\xi)$ with some $b\in\Nset^{n_1}$ for each $b_j\in\Nset$.
\end{enumerate}
Moreover, if $n_1>0$, then assume that for $j=1,\ldots,n_1$
 the zero $\xi_{1j}$ of $\bar{\kappa}_{kb}(\xi)$ is simple
 when $b_j>1$.
If one of the following conditions holds, then condition {\rm(H2)} holds$:$
\begin{enumerate}
\setlength{\leftskip}{-1.4em}
\item[\rm(iii)]
Eq.~\eqref{eqn:prop3a} does not have a polynomial solution
 that has no root at $\xi=\xi_{1j}$ and $\xi_{k\ell}$
 for any $j=1,\ldots,n_1$ and $\ell=1,\ldots,n_k;$
\item[\rm(iv)]
$\bar{n}=0$,
\begin{enumerate}
\item[\rm(iva)]
$\bar{\rho}_k(\xi)\equiv 0$ or $\tilde{\rho}_k(\xi)\not\equiv 0;$
\item[and \rm(ivb)]
$\deg(\kappa_{1\d})+n_k\neq\deg(\rho_k)+1$ or $-\rho_{k0}\notin\Nset;$
\end{enumerate}
\item[\rm(v)]
$\bar{n}>0$ and $\deg(\kappa_{1\d})+n_k>\max(\deg(\kappa_{k\n}),\deg(\rho_k)+1);$
\item[\rm(vi)]
$\bar{n}>0$, $\deg(\kappa_{1\d})+n_k<\deg(\rho_k)-\deg(\bar{\rho}_k)+1$ and
\begin{enumerate}
\item[\rm(via)]
$\bar{\rho}_k(\xi)$ has a root at $\xi=\xi_{1j}$ or $\xi_{k\ell}$
 for some $j=1,\ldots,n_1$ or $\ell=1,\ldots,n_k;$
\item[or \rm(vib)]
$\displaystyle
\tilde{\rho}_k(\xi)\not\equiv\kappa_{1\d}(\xi)\bar{\rho}_k'(\xi)\prod_{j=1}^{n_k}(\xi-\xi_{kj})$.
\end{enumerate}
\end{enumerate}
\end{prop}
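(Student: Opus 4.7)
The plan is to reformulate (H2) as the non-existence of a rational solution of a first-order linear ODE and then rule out such solutions by local Laurent analysis and global degree counts. First I would set $z(\xi):=\Theta_k(\xi)/\Omega(\xi)^{k-1}$; then $\Omega'=\kappa_1\Omega$ and $\Theta_k'=\kappa_k\Omega^{k-1}$ immediately give
\[
z'+(k-1)\kappa_1(\xi)\,z=\kappa_k(\xi),
\]
so (H2) holds iff this ODE has no rational solution. Next I would substitute $z=w/D$, where
\[
D(\xi)=\prod_{j=1}^{n_1}(\xi-\xi_{1j})^{a_{1j}}\prod_{j=1}^{n_k}(\xi-\xi_{kj})^{a_{kj}-1}.
\]
Using \eqref{eqn:kd}, which yields $\kappa_{k\d}=\kappa_{1\d}\,D\prod_{j=1}^{n_k}(\xi-\xi_{kj})$, together with the logarithmic-derivative identity $D'/D=\sum_j a_{1j}/(\xi-\xi_{1j})+\sum_j(a_{kj}-1)/(\xi-\xi_{kj})$, a direct computation transforms the ODE into \eqref{eqn:prop3a}, with the coefficient of $w$ being precisely $\rho_k(\xi)$ as in \eqref{eqn:rhok}. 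Since $D$ is rational, (H2) is equivalent to \eqref{eqn:prop3a} having no rational solution $w$.

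The local obstructions handle (i) and (ii). For (i), if $a_{kj}=1$ then $\xi_{kj}$ is a simple pole of $\kappa_k$ but a regular point of $\kappa_1$; a rational $z$ would need a pole of order $a_{kj}-1=0$ at $\xi_{kj}$, but then the LHS of the ODE is holomorphic there while the RHS has a simple pole, a contradiction. For (ii), I would expand any hypothetical rational $z$ in a Laurent series at each $\xi_{1j}$. The residue of $(k-1)\kappa_1$ at $\xi_{1j}$, together with the coefficient matching against $\kappa_k$, pins down the leading multiplicity of $z$; the polynomial $\bar\kappa_{kb}$ encodes exactly how the multiplicity shifts when the assumed pole order of $z$ is governed by $b_j$. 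The assumption that $\xi_{1j}$ is either a non-simple zero of $\bar\kappa_k$ or a simple zero of $\bar\kappa_{kb}$ for some $b$ (for every admissible $b_j$) rules out every possible Laurent profile.

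The conditions (iii)--(vi) become global questions about polynomial solutions of \eqref{eqn:prop3a}. Under the additional hypothesis in the second half, the local analysis at each $\xi_{1j}$ together with the simple-pole argument used in (i) forces any rational solution $w$ to be a polynomial vanishing at none of the $\xi_{1j}$ or $\xi_{kj}$, so (iii) applies verbatim. For (iv)--(vi), I would write $\kappa_{k\n}=\bar\rho_k\rho_k+\tilde\rho_k$ and compare degrees in \eqref{eqn:prop3a}: the leading coefficient has degree $\deg(\kappa_{1\d})+n_k$, and a polynomial ansatz for $w$ must balance $\rho_k w$ against $\kappa_{k\n}$. The leading-term balance either fails by a degree mismatch or forces the resonance $-\rho_{k0}\in\Nset$ which is excluded by (ivb); the lower-order matching, read off via $\bar\rho_k$ and $\tilde\rho_k$, fails under (iva), (via) or (vib); and in case (v) the strict dominance of the leading coefficient's degree over $\max(\deg(\kappa_{k\n}),\deg(\rho_k)+1)$ makes cancellation impossible whenever $\bar{n}>0$.

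The main obstacle is the delicate local Laurent analysis underpinning (ii): at each $\xi_{1j}$ one must track three intertwined integers, namely the multiplicity $b_{1j}$ of $\xi_{1j}$ in $\kappa_{1\d}$, the (possibly negative) exponent $a_{1j}$ in $\kappa_{k\d}$, and the free parameter $b_j$ encoding the pole order of the hypothetical solution. Correctly translating ``every admissible $b_j$'' into the disjunction on $\bar\kappa_k$ and $\bar\kappa_{kb}$ requires a careful combinatorial case analysis; once this is in hand, the global degree arguments in (iv)--(vi) are routine but extensive, with the Euclidean decomposition $\kappa_{k\n}=\bar\rho_k\rho_k+\tilde\rho_k$ serving as the principal bookkeeping device.
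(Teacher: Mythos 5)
First, a point of comparison: the present paper does not prove this proposition at all --- it is imported verbatim from Section~3.2 of \cite{AY20} --- so your proposal can only be measured against that source. Your skeleton is the right one and, as far as the construction goes, matches it: with $z=\Theta_k/\Omega^{k-1}$ one indeed gets $z'+(k-1)\kappa_1(\xi)z=\kappa_k(\xi)$, so (H2) follows from the non-existence of a rational solution of this equation; and your substitution $z=w/D$ with $D=\prod_j(\xi-\xi_{1j})^{a_{1j}}\prod_j(\xi-\xi_{kj})^{a_{kj}-1}$ does transform it into \eqref{eqn:prop3a} with the coefficient of $w$ equal to $\rho_k(\xi)$ of \eqref{eqn:rhok} (I verified this using \eqref{eqn:kd} and the logarithmic derivative of $D$). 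Your argument for (i) is complete and correct.

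The difficulty is that the proposal stops exactly where the content of the proposition lies, and in one place makes a claim that is not yet justified. For (ii) you say that $\bar\kappa_{kb}$ ``encodes'' the multiplicity shift, but the statement of (ii) cannot be verified without actually deriving the dictionary: at a zero $\xi_{1j}$ of $\kappa_{1\d}$ one must show that the admissible local orders of a rational $z$ are either $m=a_{1j}$ (possible precisely when $\xi_{1j}$ is a simple zero of $\bar\kappa_k$) or a resonant order $m=a_{1j}+b_j-1$ detected by non-simplicity of the zero for $\bar\kappa_{kb}$, and then check that the stated disjunction excludes every Laurent profile; none of this computation appears. For (iii) the assertion that any rational solution of \eqref{eqn:prop3a} ``is a polynomial vanishing at none of the $\xi_{1j}$ or $\xi_{kj}$, so (iii) applies verbatim'' is itself the nontrivial step: you must also rule out poles of $z$ at generic points, at zeros of $\kappa_{1\d}$ not among the $\xi_{1j}$, and at the $\xi_{1j}$ in the resonant cases, which is exactly what the ``Moreover'' hypothesis on $\bar\kappa_{kb}$ is for; as written this is a declaration, not an argument. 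Finally, (iv)--(vi) are not ``routine'' degree counts: the objects $\bar n$, $\bar\rho_k$, $\tilde\rho_k$ and the leading coefficient $\rho_{k0}$ enter through a Rothstein-type reduction of the kind recorded in Proposition~\ref{prop:a1} (used in the appendix to prove Proposition~\ref{prop:3b}), and your sketch gives no indication of how, for example, the dichotomy $\bar n=0$ versus $\bar n>0$, the resonance clause $-\rho_{k0}\notin\Nset$ in (ivb), or condition (vib) would emerge from the ``leading-term balance.'' In short, the plan is sound and coincides with the cited approach, but conditions (ii) and (iii)--(vi) are described rather than proved.
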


See Section~3.2 of \cite{AY20} for the proof.
In application of Proposition~\ref{prop:3a} in Sections~4 and 5,
 condition~(iii) is slightly strengthened and replaced with one that
 Eq.~\eqref{eqn:prop3a} simply has no polynomial solution.
 
Let $a(\xi),b(\xi),c(\xi)$ be nonzero polynomials of $\xi$
 and consider the first-order linear differential equation
\begin{equation}
a(\xi)z'+b(\xi)z=c(\xi).
\label{eqn:prop3b}
\end{equation}
Let $\ell_a=\deg(a)$, $\ell_b=\deg(b)$ and $\ell_c=\deg(c)$,
 and let $a_0$ and $b_0$ denote the leading coefficients of $a(\xi)$ and $b(\xi)$, respectively.
Let $\Zset_{\le l}=\{j\in\Zset\mid j\le l\}$.
The following proposition is useful to check condition~(iii) in Proposition~\ref{prop:3a},
 and extensively used in Sections~4 and 5.

\begin{prop}
\label{prop:3b}
Suppose that $a_0=1$, $c(\xi)\not\equiv 0$, $b(\xi)/c(\xi)\not\in\Cset$
 and Eq.~\eqref{eqn:prop3b} has a polynomial solution.
Then the following hold$\,:$
\begin{enumerate}
\setlength{\leftskip}{-1.6em}
\item[\rm(i)]
$\ell_c\ge\max(\ell_a,\ell_b+1)$ unless $\ell_a=\ell_b+1$ and $b_0\in\Zset_{\le-(\ell_c-\ell_b+1)};$
\item[\rm(ii)]
If $a(\xi),b(\xi)$ have a common factor $d(\xi)$, then $d(\xi)$ divides $c(\xi);$
\item[\rm(iii)]
If $a(\xi),b(\xi)$ have no common factor, then $\ell_c\ge\max(2\ell_a-1,\ell_a+\ell_b)$.
\end{enumerate}
\end{prop}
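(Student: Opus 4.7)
The proof is a careful degree-count analysis of a polynomial solution $z(\xi)$ of the ODE $a(\xi)z'(\xi)+b(\xi)z(\xi)=c(\xi)$. A preliminary observation reduces the case analysis: the hypotheses force $\ell_z:=\deg z\ge 1$, because if $z\equiv z_0$ were a nonzero constant, then $bz_0=c$ would give $b/c=1/z_0\in\Cset$, contradicting $b/c\not\in\Cset$.

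For (i), write $z=z_0\xi^{\ell_z}+\cdots$ with $z_0\neq 0$. Using $a_0=1$, the top-degree contributions to $c=az'+bz$ are $\ell_zz_0\,\xi^{\ell_a+\ell_z-1}$ from $az'$ and $b_0z_0\,\xi^{\ell_b+\ell_z}$ from $bz$. If $\ell_a-1\neq\ell_b$ these live in distinct degrees, so $\ell_c=\max(\ell_a+\ell_z-1,\ell_b+\ell_z)\ge\max(\ell_a,\ell_b+1)$ directly from $\ell_z\ge 1$. The resonant case $\ell_a=\ell_b+1$ has combined leading coefficient $(\ell_z+b_0)z_0$; when it is nonzero the same inequality survives, while otherwise $b_0=-\ell_z$ forces a cancellation and a degree drop $\ell_c\le\ell_a+\ell_z-2=\ell_b+\ell_z-1$. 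Rearranging gives $\ell_z\ge\ell_c-\ell_b+1$, whence $b_0=-\ell_z\le-(\ell_c-\ell_b+1)$, which is precisely the exception stated in (i).

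Part (ii) is immediate from divisibility: if $d(\xi)$ divides both $a(\xi)$ and $b(\xi)$, then it divides $az'+bz=c$.

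For (iii), I exploit coprimality by performing polynomial division $z=qa+r$ with $\deg r<\ell_a$. The relation $bz\equiv c\pmod a$, together with $\gcd(a,b)=1$, makes $b$ invertible mod $a$, and so pins down $r$ uniquely as the polynomial of degree $<\ell_a$ with $br\equiv c\pmod a$. Substituting $z=qa+r$ into the ODE and dividing out one factor of $a$ yields a residual first-order linear equation $a\,q'+(a'+b)\,q=s-r'$ for $q$, where $s=(c-br)/a$ is a polynomial by construction. The plan is then to track two independent sources of lower bounds on $\ell_c$: at $\xi=\infty$, the $qa$-piece contributes terms of degree $\ge 2\ell_a-1$ through $a^2q'$ and $aa'q$ when $q\not\equiv 0$; and at each irreducible factor of $a$ (at which $b$ is nonzero by coprimality), the $br$-piece forces $\deg c\ge\ell_a+\ell_b$ via the local data $br\equiv c\pmod a$. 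The main obstacle will be a clean treatment of potential cancellations between the $qa$- and $r$-contributions, which I expect to control by iterating the leading-coefficient analysis of (i) on the reduced equation for $q$ and localizing at the roots of $a$ via a Hermite-interpolation-type argument made possible by $\gcd(a,b)=1$.
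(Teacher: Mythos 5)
Your arguments for parts (i) and (ii) are complete and coincide with the paper's: the leading-coefficient comparison of $\ell z_0\xi^{\ell_a+\ell-1}$ against $b_0z_0\xi^{\ell_b+\ell}$, with the resonant cancellation $b_0=-\ell\le -(\ell_c-\ell_b+1)$ identified as the exception, and the divisibility observation for (ii). Your reduction for part (iii) is also the same device the paper uses (its Proposition A.1, the Rothstein--Hermite reduction): writing $z=qa+r$ with $\deg r<\ell_a$ and $br\equiv c\pmod a$ is exactly the paper's $c=a\tilde r+bs$, $\zeta=(z-s)/a$, leading to the residual equation $aq'+(a'+b)q=s-r'$.

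The gap is that part (iii) stops at a plan precisely where the proof has to be finished, and the sketched completion would not work as stated. First, the claim that the congruence $br\equiv c\pmod a$ ``forces $\deg c\ge\ell_a+\ell_b$'' at the roots of $a$ is false: a congruence modulo $a$ carries no information about degrees above $\ell_a-1$ (one can have $c$ equal to the remainder of $br$ on division by $a$, of degree $<\ell_a$), so no localization or Hermite-interpolation argument at the roots of $a$ can produce that bound by itself. Second, the ``cancellation between the $qa$- and $r$-contributions'' that you defer is not a side issue but the whole content of (iii). The paper's actual completion is a short degree count that you do not carry out: when $\ell_a\ne\ell_b+1$ the exceptional case of (i) cannot occur, so $\deg z=\ell_c-\ell_a+1$ (if $\ell_a>\ell_b+1$) or $\deg z=\ell_c-\ell_b$ (if $\ell_a<\ell_b+1$); since $q=(z-r)/a$ is a polynomial, $\deg z-\ell_a\ge 0$, which yields $\ell_c\ge 2\ell_a-1$ and $\ell_c\ge\ell_a+\ell_b$ in the two cases respectively, and in each case the stated maximum is attained by the larger of the two bounds. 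Until you supply this (or an equivalent) final step --- including a treatment of the borderline case $\ell_a=\ell_b+1$, where the resonance of part (i) can intervene --- part (iii) is not proved.
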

See Appendix~A for a proof of Proposition~\ref{prop:3b}.


\section{Proof of Theorem~\ref{thm:main1}}

Based on Corollary~\ref{cor:PD3},
 we only have to prove the meromorphic nonintegrability of \eqref{eqn:PD3p0}
 near the $x_3$-plane.
We set $\xi=x_3$ and $\eta=r$ and apply Theorem~\ref{thm:c} to \eqref{eqn:PD3p0}
 with the assistance of Propositions~\ref{prop:3a} and \ref{prop:3b}.
We divide the proof into the following eight cases
 and check $\deg(\kappa_{1\d})>\deg(\kappa_{1\n})$,
 condition~(H1) and the hypotheses of Proposition~\ref{prop:3a},
 using Proposition~\ref{prop:3b} if necessary:
\begin{enumerate}
\setlength{\leftskip}{-1.8em}
\item[(a)]
$\alpha_3\beta_6\neq 0$, and $\alpha_1/\alpha_3$ or $\beta_2/\beta_6\not\in\Qset$:
\begin{enumerate}
\setlength{\leftskip}{-1.8em}
\item[(a1)]
$\alpha_1\alpha_4,\beta_2/\beta_6-\alpha_1/\alpha_3,\beta_5/\beta_6-\alpha_4/\alpha_3\neq 0$;
\item[(a2)]
$\alpha_1\alpha_4,\alpha_1\beta_5-\alpha_3\beta_1-\alpha_4\beta_2\neq 0$
 and $\beta_2/\beta_6=\alpha_1/\alpha_3$;
\item[(a3)]
$\alpha_1\alpha_4\neq 0$
 and $\beta_5/\beta_6=\alpha_4/\alpha_3$;
\item[(a4)]
$\alpha_1,\beta_2/\beta_6-\alpha_1/\alpha_3\neq 0$ and $\alpha_4=0$;
\item[(a5)]
$\alpha_1,\alpha_3\beta_1-\alpha_1\beta_5\neq 0$
 and $\alpha_4,\beta_2/\beta_6-\alpha_1/\alpha_3=0$;
\item[(a6)]
$\alpha_1=0$, and $\beta_5/\beta_6-\alpha_4/\alpha_3$
 or $\alpha_3\beta_1-\alpha_4\beta_2\neq 0$;
\end{enumerate}
\item[(b)]
$\alpha_3=0$, $\beta_6\neq 0$, and $\alpha_1\neq 0$ or $\beta_2/\beta_6\not\in\Qset$:
\begin{enumerate}
\setlength{\leftskip}{-1.8em}
\item[(b1)]
$\alpha_1\alpha_4\neq 0$;
\item[(b2)]
$\alpha_1\neq 0$, $\alpha_4=0$, and $\beta_1\neq\beta_5$.
\end{enumerate}
\end{enumerate}
Equation~\eqref{eqn:fol} becomes
\begin{equation}\label{eqn:folFH}
r'=\frac{r(\alpha_1 x_3+\beta_1r^2+\beta_2x_3^2)}
 {\alpha_3x_3^2+\alpha_4r^2+(\beta_5r^2+\beta_6x_3^2)x_3},
\end{equation}
where the prime represents differentiation with respect to $x_3$.
We take $r=0$ as the integral curve, i.e., $\varphi(x_3)=0$,
 and compute \eqref{eqn:kappa} as
\begin{equation}
\begin{split}
&
\kappa_1(x_3)=\frac{\beta_2x_3+\alpha_1}{x_3(\beta_6x_3+\alpha_3)},\quad
\kappa_2(x_3)=0,\\
&
\kappa_3(x_3)=\frac{6((\beta_1\beta_6-\beta_2\beta_5)x_3^2
 -(\alpha_1\beta_5-\alpha_3\beta_1+\alpha_4\beta_2)x_3-\alpha_1\alpha_4)}
 {x_3^3(\beta_6x_3+\alpha_3)^2},\\
&
\kappa_4(x_3)=0,
\quad\ldots.
\end{split}
\label{eqn:kfH}
\end{equation}
Henceforth we take $k=3$ in application
 of Theorem~\ref{thm:c} and Proposition~\ref{prop:3a}.

\subsection{Case of $\alpha_3\beta_6\neq 0$,
 and $\alpha_1/\alpha_3$ or $\beta_2/\beta_6\not\in\Qset$}

We first consider the case of $\alpha_3\beta_6\neq 0$,
 and $\alpha_1/\alpha_3$ or $\beta_2/\beta_6\not\in\Qset$.
From \eqref{eqn:kfH} we easily see that $\deg(\kappa_{1\d})>\deg(\kappa_{1\n})$, and compute
\[
\Omega(x_3)=\exp\left(\int\frac{\beta_2x_3+\alpha_1}{x_3(\beta_6x_3+\alpha_3)}\d x_3\right)
=x_3^{\alpha_1/\alpha_3}(\beta_6 x_3+\alpha_3)^{\beta_2/\beta_6-\alpha_1/\alpha_3},
\]
so that condition~(H1) holds
 since $\alpha_1/\alpha_3$ or $\beta_2/\beta_6-\alpha_1/\alpha_3\not\in\Qset$.
We now only have to check the hypotheses of Proposition~\ref{prop:3a}.
We separately consider cases (a1)-(a6) stated above.

\subsubsection{Case of $\alpha_1\alpha_4,\beta_2/\beta_6-\alpha_1/\alpha_3,
  \beta_5/\beta_6-\alpha_4/\alpha_3\neq 0$}

By \eqref{eqn:kfH} we have
\begin{align*}
&
\kappa_{1\n}(x_3)=\frac{\beta_2x_3+\alpha_1}{\beta_6}\not\equiv 0,\quad
\kappa_{1\d}(x_3)=x_3\left(x_3+\frac{\alpha_3}{\beta_6}\right),\\
&
\kappa_{3\n}(x_3)=\frac{6}{\beta_6^2}((\beta_1\beta_6-\beta_2\beta_5)x_3^2
 -(\alpha_1\beta_5-\alpha_3\beta_1+\alpha_4\beta_2)x_3-\alpha_1\alpha_4)\not\equiv 0,\\
&
\kappa_{3\d}(x_3)=x_3^3\left(x_3+\frac{\alpha_3}{\beta_6}\right)^2,
\end{align*}
from which $n_1=2$, $\xi_{11}=0$, $\xi_{12}=-\alpha_3/\beta_6\neq 0$,
 $a_{11}=2$, $a_{12}=1$ and $n_3=0$.
Note that $\kappa_{3\n}(x_3)$ has no root at $x_3=0,-\alpha_3/\beta_6$ since
\[
\kappa_{3\n}(0)=-\frac{6\alpha_1\alpha_4}{\beta_6^2}\neq 0,\quad
\kappa_{3\n}\left(-\frac{\alpha_3}{\beta_6}\right)=\frac{6}{\beta_6^4}
 (\alpha_3\beta_2-\alpha_1\beta_6)(\alpha_3\beta_5-\alpha_1\beta_6)\neq 0.
\]
We compute \eqref{eqn:bkappa} as
\[
\bar{\kappa}_{3b}(x_3)
 =\left(\left(\frac{2\beta_2}{\beta_6}-(b_1+b_2)-1\right)x_3
 +\frac{2\alpha_1-\alpha_3(b_1+1)}{\beta_6}\right)x_3
 \left(x_3+\frac{\alpha_3}{\beta_6}\right),
\]
where $b=(b_1,b_2)\in\Nset^2$.
We see that the zero $x_3=0$  (resp. $x_3=-\alpha_3/\beta_6$) of $\bar{\kappa}_{3b}(x_3)$
 is simple if and only if
\begin{equation}
\alpha_3(b_1+1)-2\alpha_1\neq 0\quad
\mbox{(resp. $\alpha_3\beta_6b_2-2(\alpha_3\beta_2-\alpha_1\beta_6)\neq 0$)}.
\label{eqn:4a}
\end{equation}
Hence, if $\alpha_1/\alpha_3=1$
 (resp. $\beta_2/\beta_6-\alpha_1/\alpha_3=\tfrac{1}{2}$),
 then the zero $x_3=0$ (resp. $x_3=-\alpha_3/\beta_6$) of $\bar{\kappa}_3(x_3)$ is double
 and the zero $x_3=-\alpha_3/\beta_6$ (resp. $x_3=0$) of $\bar{\kappa}_{3b}(x_3)$ is simple
 for any $b\in\Nset^2$ since $\alpha_1/\alpha_3$ or $\beta_2/\beta_6\in\Qset$. 
Thus, condition~(ii) of Proposition~\ref{prop:3a} holds
 if $\alpha_1/\alpha_3=1$
  or $\beta_2/\beta_6-\alpha_1/\alpha_3=\tfrac{1}{2}$.

We assume that $2\alpha_1/\alpha_3\not\in\Zset_{\ge 2}$
 and $2(\beta_2/\beta_6-\alpha_1/\alpha_3)\not\in\Zset_{\ge 1}$.
Both conditions in \eqref{eqn:4a} hold,
 so that the zeros $x_3=0$ and $-\alpha_3/\beta_6$ of $\bar{\kappa}_{3b}(x_3)$ are simple,
 for any $b\in\Nset^2$.
Equation~\eqref{eqn:rhok} becomes
\[
\rho_3(x_3)
 =\left(\frac{2\beta_2}{\beta_6}-3\right)x_3
 +\frac{2(\alpha_1-\alpha_3)}{\beta_6}.
\]
Thus, $\deg(\kappa_{1\d})=2$, $\deg(\rho_3)\leq 1$,
 and $\kappa_{1\d}(x_3)$ and $\rho_3(x_3)$ have no common factor
 since otherwise $\rho_3(x_3)$ has a root at $x_3=0$ or $-\alpha_3/\beta_6$ and consequently
\[
\alpha_1/\alpha_3=1\quad\mbox{or}\quad
\beta_2/\beta_6-\alpha_1/\alpha_3=\tfrac{1}{2}.
\]
If $\beta_1\beta_6-\beta_2\beta_5\neq 0$,
 then $\deg(\kappa_{3\n})=2$ and
 Eq.~\eqref{eqn:prop3a} has the form \eqref{eqn:prop3b}
 with $\ell_a=3$, $\ell_b\leq 1$, $\ell_c=2$ and $b(\xi)/c(\xi)\not\in\Cset$.
On the other hand,
 if $\beta_1\beta_6-\beta_2\beta_5=0$ and Eq.~\eqref{eqn:thm1a} holds,
 then $\deg(\kappa_{3\n})\le 1$ and Eq.~\eqref{eqn:prop3a} has the form \eqref{eqn:prop3b}
 with $\ell_a=2$, $\ell_b,\ell_c\leq 1$ and $b(\xi)/c(\xi)\not\in\Cset$ since
 \begin{align*}
&
2(\alpha_1-\alpha_3)(\alpha_1\beta_5-\alpha_3\beta_1+\alpha_4\beta_2)
 -\alpha_1\alpha_4(2\beta_2-3\beta_6)\\
&
=2(\alpha_1-\alpha_3)(\alpha_1\beta_5-\alpha_3\beta_1)
 +\alpha_4(3\alpha_1\beta_6-2\alpha_3\beta_2)\neq 0
\end{align*}
by \eqref{eqn:thm1a}.
Using Propositions~\ref{prop:3a}(iii) and \ref{prop:3b}(iii),
 we obtain Theorem~\ref{thm:main1}(i).

\begin{rmk}
\label{rmk:4a}
In the above case,
 we can take $k=5$ in application of Theorem~$\ref{thm:c}$ and Proposition~$\ref{prop:3a}$
 but cannot improve our result.
Actually, we have
\begin{align*}
\kappa_{5\n}(x_3)=& -\frac{120}{\beta_6^3}(\beta_5x_3+\alpha_4)
 ((\beta_1\beta_6-\beta_2\beta_5)x_3^2\\
&\qquad
 -(\alpha_1\beta_5-\alpha_3\beta_1+\alpha_4\beta_2)x_3-\alpha_1\alpha_4)
 \not\equiv 0,\\
\kappa_{5\d}(x_3)=&x_3^5\left(x_3+\frac{\alpha_3}{\beta_5}\right)^3,
\end{align*}
from which $n_1=2$, $\xi_{11}=0$, $\xi_{12}=-\alpha_3/\beta_6\neq 0$,
 $a_{11}=4$, $a_{12}=2$ and $n_5=0$.
We compute \eqref{eqn:bkappa} as
\[
\bar{\kappa}_{5b}(x_3)
 =\left(\left(\frac{4\beta_2}{\beta_6}-(b_1+b_2)-4\right)x_3
 +\frac{4\alpha_1-\alpha_3(b_1+3)}{\beta_6}\right)x_3\left(x_3+\frac{\alpha_3}{\beta_6}\right),
\]
where $b=(b_1,b_2)\in\Nset^2$.
We see that the zero $x_3=0$ $($resp. $x_3=-\alpha_3/\beta_6)$ of $\bar{\kappa}_{3b}(x_3)$
 is simple if and only if
\[
\alpha_3(b_1+3)-4\alpha_1\neq 0\quad
(\mbox{resp. $\alpha_3\beta_6(b_2+1)-4(\alpha_3\beta_2-\alpha_1\beta_6)\neq 0$}).
\]
Hence, if $\alpha_1/\alpha_3=1$
 $($resp. $\beta_2/\beta_6-\alpha_1/\alpha_3=\tfrac{1}{2})$,
 then the zero $x_3=0$ $($resp. $x_3=-\alpha_3/\beta_6)$ of $\bar{\kappa}_3(x_3)$ is double,
 so that the zero $x_3=-\alpha_3/\beta_6$ $($resp. $x_3=0)$ of $\bar{\kappa}_{3b}(x_3)$ is simple,
 for any $b\in\Nset^2$. 
Thus, condition~{\rm(ii)} of Proposition~$\ref{prop:3a}$ holds
 if $\alpha_1/\alpha_3=1$ or $\beta_2/\beta_6-\alpha_1/\alpha_3=\tfrac{1}{2}$.
So we assume that $4\alpha_1/\alpha_3\not\in\Zset_{\ge 4}$
 and $4(\beta_2/\beta_6-\alpha_1/\alpha_3)\not\in\Zset_{\ge 2}$
 and repeat the above argument,
 but still need to exclude the case of $2\alpha_1/\alpha_3\in\Zset_{\ge 2}$
 and $2(\beta_2/\beta_6-\alpha_1/\alpha_3)\not\in\Zset_{\ge 1}$.
We do not repeat such an argument for \eqref{eqn:PD3} below.
\end{rmk}

\subsubsection{Case of $\alpha_1\alpha_4,\alpha_1\beta_5-\alpha_3\beta_1-\alpha_4\beta_2\neq 0$
 and $\beta_2/\beta_6=\alpha_1/\alpha_3$}\hspace*{-4pt}
We immediately have $\alpha_1/\alpha_3=\beta_2/\beta_6\not\in\Qset$.
By \eqref{eqn:kfH} we have
\begin{align*}
&
\kappa_{1\n}(x_3)=\frac{\beta_2}{\beta_6}\neq 0,\quad
\kappa_{1\d}(x_3)=x_3,\\
&
\kappa_{3\n}(x_3)=\frac{6}{\beta_6^2}
 ((\beta_1\beta_6-\beta_2\beta_5)x_3-\alpha_4\beta_2)\not\equiv 0,\quad
\kappa_{3\d}(x_3)=x_3^3\left(x_3+\frac{\alpha_3}{\beta_6}\right).
\end{align*}
Note that $\beta_2=\alpha_1\beta_6/\alpha_3\neq 0$ and
\[
\kappa_{3\n}\left(-\frac{\alpha_3}{\beta_6}\right)
 =\frac{6}{\beta_6^2}(\alpha_1\beta_5-\alpha_3\beta_1-\alpha_4\beta_2)\neq 0.
\]
We have $n_1=1$, $\xi_{11}=0$, $a_{11}=2$ and $n_3=0$.
 $\xi_{31}=-\alpha_3/\beta_6$ and $a_{31}=1$.
Hence, condition~(i) of Proposition~\ref{prop:3a} holds,
and we obtain Theorem~\ref{thm:main1}(ii).

\subsubsection{Case of $\alpha_1\alpha_4\neq 0$ and $\beta_5/\beta_6=\alpha_4/\alpha_3$}

By \eqref{eqn:kfH} we have
\[
\kappa_{1\n}(x_3)=\frac{\beta_2x_3+\alpha_1}{\beta_6}\not\equiv 0,\quad
\kappa_{1\d}(x_3)=x_3\left(x_3+\frac{\alpha_3}{\beta_6}\right)
\]
and
\[
\kappa_{3\n}(x_3)=\frac{6}{\beta_6^2}
 ((\beta_1\beta_6-\beta_2\beta_5)x_3-\alpha_1\beta_5)\not\equiv 0,\quad
\kappa_{3\d}(x_3)=x_3^3\left(x_3+\frac{\alpha_3}{\beta_6}\right)^2
\]
if $\alpha_1\beta_5+\alpha_3\beta_1-\alpha_4\beta_2\neq 0$; and
\[
\kappa_{3\n}(x_3)=\frac{6}{\beta_6^2}(\beta_1\beta_6-\beta_2\beta_5)\neq 0,\quad
\kappa_{3\d}(x_3)=x_3^3\left(x_3+\frac{\alpha_3}{\beta_6}\right)
\]
if $\alpha_1\beta_5+\alpha_3\beta_1-\alpha_4\beta_2=0$.
Note that $\beta_5=\alpha_4\beta_6/\alpha_3\neq 0$
 and that if $\alpha_1\beta_5+\alpha_3\beta_1-\alpha_4\beta_2\neq 0$, then
\[
\kappa_{3\n}\left(-\frac{\alpha_3}{\beta_6}\right)
 =-\frac{6}{\beta_6^2}(\alpha_1\beta_5+\alpha_3\beta_1-\alpha_4\beta_2)\neq 0.
\]
Moreover, if $\alpha_1\beta_5+\alpha_3\beta_1-\alpha_4\beta_2=0$, then
\[
\beta_1\beta_6-\beta_2\beta_5=\frac{\beta_6}{\alpha_3}(\alpha_3\beta_1-\alpha_4\beta_2)
 =-\frac{\alpha_1\beta_5\beta_6}{\alpha_3}\neq 0.
\]
We separately discuss the two cases,
 when $\alpha_1\beta_5+\alpha_3\beta_1-\alpha_4\beta_2\neq 0$ or not.

We begin with the first case.
We have $n_1=2$, $\xi_{11}=0$, $\xi_{12}=-\alpha_3/\beta_6$,
 $a_{11}=2$, $a_{12}=1$ and $n_3=0$,
 and compute \eqref{eqn:bkappa} as
\[
\bar{\kappa}_{3b}(x_3)
 =\left(\left(\frac{2\beta_2}{\beta_6}-(b_1+b_2)-1\right)x_3
 +\frac{2\alpha_1-\alpha_3(b_1+1)}{\beta_6}\right)x_3
 \left(x_3+\frac{\alpha_3}{\beta_6}\right),
\]
where $b=(b_1,b_2)\in\Nset^2$.
The zero $x_3=0$ (resp. $x_3=-\alpha_3/\beta_6$) of $\bar{\kappa}_{3b}(x_3)$
 is simple if and only if
\begin{equation}
\alpha_3(b_1+1)-2\alpha_1\neq 0\quad
(\mbox{resp. $\alpha_3\beta_6b_2-2(\alpha_3\beta_2-\alpha_1\beta_6)\neq 0$}).
\label{eqn:4c}
\end{equation}
Hence, if $\alpha_1/\alpha_3=1$
 (resp. $\beta_2/\beta_6-\alpha_1/\alpha_3=\tfrac{1}{2}$),
 then the zero $x_3=0$  (resp. $x_3=-\alpha_3/\beta_6$) of $\bar{\kappa}_3(x_3)$ is double.
Thus, condition~(ii) of Proposition~\ref{prop:3a} holds
 if $\alpha_1/\alpha_3=1$ or $\beta_2/\beta_6-\alpha_1/\alpha_3=\tfrac{1}{2}$.

Suppose that $2\alpha_1/\alpha_3\not\in\Zset_{\ge 2}$,
 $2(\beta_2/\beta_6-\alpha_1/\alpha_3)\not\in\Zset_{\ge 1}$
 and Eq.~\eqref{eqn:thm1b} holds.
Then condition \eqref{eqn:4c} holds,
 so that the zeros  $x_3=0$ and $-\alpha_3/\beta_6$ of $\bar{\kappa}_{3b}(x_3)$ are simple,
 for any $b\in\Nset^2$.
Equation~\eqref{eqn:rhok} becomes
\[
\rho_3(x_3)
 =\left(\frac{2\beta_2}{\beta_6}-3\right)x_3
 +\frac{2(\alpha_1-\alpha_3)}{\beta_6},
\]
so that $\deg(\kappa_{1\d})=2$, $\deg(\rho_3),\deg(\kappa_{3\n})\le 1$,
 and $\kappa_{1\d}(x_3)$ and $\rho_3(x_3)$ have no common factor as in Section~4.1.1.
Thus, Eq.~\eqref{eqn:prop3a} has the form \eqref{eqn:prop3b}
 with $\ell_a=2$, $\ell_b,\ell_c\le 1$ and $b(\xi)/c(\xi)\not\in\Cset$ since
\begin{align*}
&
2(\alpha_1-\alpha_3)(\beta_1\beta_6-\beta_2\beta_5)+\alpha_1\beta_5(2\beta_2-3\beta_6)\\
&
=\alpha_1\beta_6(2\beta_1-3\beta_5)-2\alpha_3(\beta_1\beta_6-\beta_2\beta_5)\neq 0
\end{align*}
by \eqref{eqn:thm1b}.
Using Propositions~\ref{prop:3a}(iii) and \ref{prop:3b}(iii),
 we obtain Theorem~\ref{thm:main1}(iiia).

We turn to the second case.
We have $n_1=1$, $\xi_{11}=0$, $a_{11}=2$ and $n_3=0$,
 and compute \eqref{eqn:bkappa} as
\[
\bar{\kappa}_{3b}(x_3)
 =\left(\left(\frac{2\beta_2}{\beta_6}-b_1-1\right)x_3
 +\frac{2\alpha_1-\alpha_3(b_1+1)}{\beta_6}\right)x_3,
\]
where $b=b_1\in\Nset$.
The zero $x_3=0$ of $\bar{\kappa}_{3b}(x_3)$ is simple if and only if
\begin{equation}
\alpha_3(b_1+1)-2\alpha_1\neq 0.
\label{eqn:4d}
\end{equation}
Hence, if $\alpha_1/\alpha_3=1$,
 then the zero $x_3=0$ of $\bar{\kappa}_3(x_3)$ is double,
 so that condition~(ii) of Proposition~\ref{prop:3a} holds.

Suppose that $2\alpha_1/\alpha_3\not\in\Zset_{\ge 2}$ and $\beta_2\neq\beta_6$.
Then condition \eqref{eqn:4d} holds,
 so that the zeros  $x_3=0,-\alpha_3/\beta_6$ of $\bar{\kappa}_{3b}(x_3)$ are simple,
 for any $b\in\Nset$.
Equation~\eqref{eqn:rhok} becomes
\[
\rho_3(x_3)
 =2\left(\frac{\beta_2}{\beta_6}-1\right)x_3
 +\frac{2(\alpha_1-\alpha_3)}{\beta_6},
\]
so that $\deg(\kappa_{1\d})=2$, $\deg(\rho_3)=1$ and $\deg(\kappa_{3n})=0$.
If $\beta_2/\beta_6=\alpha_1/\alpha_3$,
 then $x_3+\alpha_3/\beta_6$ is a common factor of $\kappa_{1\d}(x_3)$ and $\rho_3(x_3)$ by
\[
\rho_3\left(-\frac{\alpha_3}{\beta_6}\right)=\frac{2}{\beta_6^2}(\alpha_1\beta_6-\alpha_3\beta_2),
\] 
and Eq.~\eqref{eqn:prop3a} has the form \eqref{eqn:prop3b}
 such that $a(\xi)$ and $b(\xi)$ have a common factor and $b(\xi)/c(\xi)\not\in\Cset$ but $\ell_c=0$.
On the other hand, if $\beta_2/\beta_6=\alpha_1/\alpha_3$,
 then $\kappa_{1\d}(x_3)$ and $\rho_3(x_3)$ have no common factor
 and Eq.~\eqref{eqn:prop3a} has the form \eqref{eqn:prop3b}
 with $\ell_a=2$, $\ell_b=1$, $\ell_c=0$ and $b(\xi)/c(\xi)\not\in\Cset$.
Using Proposition~\ref{prop:3a}(ii) and (iii),
 we see that condition~(iii) of Proposition~\ref{prop:3a} holds,
 and obtain Theorem~\ref{thm:main1}(iiib).

\subsubsection{Case of $\alpha_1,\beta_2/\beta_6-\alpha_1/\alpha_3\neq 0$ and $\alpha_4=0$}

By \eqref{eqn:kfH} we have
\[
\kappa_{1\n}(x_3)=\frac{\beta_2x_3+\alpha_1}{\beta_6}\not\equiv 0,\quad
\kappa_{1\d}(x_3)=x_3\left(x_3+\frac{\alpha_3}{\beta_6}\right),
\]
and
\begin{align*}
&
\kappa_{3\n}(x_3)=\frac{6}{\beta_6^2}((\beta_1\beta_6-\beta_2\beta_5)x_3
 +\alpha_3\beta_1-\alpha_1\beta_5)\not\equiv 0,\\
&
\kappa_{3\d}(x_3)=x_3^2\left(x_3+\frac{\alpha_3}{\beta_6}\right)^2
\end{align*}
if $\beta_5,\alpha_3\beta_1-\alpha_1\beta_5\neq 0$;
\[
\kappa_{3\n}(x_3)=\frac{6}{\beta_6^2}(\beta_1\beta_6-\beta_2\beta_5)\neq 0,\quad
\kappa_{3\d}(x_3)=x_3\left(x_3+\frac{\alpha_3}{\beta_6}\right)^2
\]
if $\alpha_3\beta_1-\alpha_1\beta_5=0$ and $\beta_5\neq 0$; and
\[
\kappa_{3\n}(x_3)=\frac{6\beta_1}{\beta_6}\neq 0,\quad
\kappa_{3\d}(x_3)=x_3^2\left(x_3+\frac{\alpha_3}{\beta_6}\right)
\]
if $\beta_5=0$ and $\beta_1\neq 0$.
Note that if $\alpha_3\beta_1-\alpha_1\beta_5=0$ and $\beta_5\neq 0$, then
\[
\beta_1\beta_6-\beta_2\beta_5=\frac{\beta_5}{\alpha_3}(\alpha_1\beta_6-\alpha_3\beta_2)\neq 0.
\]
We separately discuss the three cases,
 when $\beta_5,\alpha_3\beta_1-\alpha_1\beta_5\neq 0$;
 when $\alpha_3\beta_1-\alpha_1\beta_5=0$ and $\beta_5\neq 0$;
 and when $\beta_5=0$ and $\beta_1\neq 0$.

We begin with the first case.
We have
 $n_1=2$, $\xi_{11}=0$, $\xi_{12}=-\alpha_3/\beta_6$,
 $a_{11}=a_{12}=1$ and $n_3=0$.
We compute \eqref{eqn:bkappa} as
\[
\bar{\kappa}_{3b}(x_3)
 =\left(\left(\frac{2\beta_2}{\beta_6}-(b_1+b_2)\right)x_3
 +\frac{2\alpha_1-\alpha_3b_1}{\beta_6}\right)x_3\left(x_3+\frac{\alpha_3}{\beta_6}\right),
\]
where $b=(b_1,b_2)\in\Nset^2$.
The zero $x_3=0$ (resp. $x_3=-\alpha_3/\beta_6$) of $\bar{\kappa}_{3b}(x_3)$ is simple
 if and only if
\begin{equation}
\alpha_3b_1-2\alpha_1\neq 0\quad
\mbox{(resp. $\alpha_3\beta_6b_2-2(\alpha_3\beta_2-\alpha_1\beta_6)\neq 0$)}
\label{eqn:4e}
\end{equation}
Hence, if $\alpha_1/\alpha_3=\tfrac{1}{2}$
 (resp. $\beta_2/\beta_6-\alpha_1/\alpha_3=\tfrac{1}{2}$),
 then the zero $x_3=0$ (resp. $x_3=-\alpha_3/\beta_6$) of $\bar{\kappa}_3(x_3)$ is double
 and the zero $x_3=-\alpha_3/\beta_6$ (resp. $x_3=0$) of $\bar{\kappa}_{3b}(x_3)$ is simple
 for any $b\in\Nset^2$. 
Thus, condition~(ii) of Proposition~\ref{prop:3a} holds
 if $\alpha_1/\alpha_3$ or $\beta_2/\beta_6-\alpha_1/\alpha_3=\tfrac{1}{2}$.

Suppose that
 $2\alpha_1/\alpha_3,2(\beta_2/\beta_6-\alpha_1/\alpha_3)\not\in\Zset_{\ge 1}$
 and Eq.~\eqref{eqn:thm1c} holds.
Then both conditions in \eqref{eqn:4e} hold,
 so that the zeros  $x_3=0,-\alpha_3/\beta_6$ of $\bar{\kappa}_{3b}(x_3)$ are simple,
 for any $b\in\Nset^2$.
Equation~\eqref{eqn:rhok} becomes
\[
\rho_3(x_3)
 =2\left(\frac{\beta_2}{\beta_6}-1\right)x_3
 +\frac{2\alpha_1-\alpha_3}{\beta_6},
\]
so that $\deg(\kappa_{1\d})=2$, $\deg(\rho_3),\deg(\kappa_{3\n})\le 1$,
 and $\kappa_{1\d}(x_3)$ and $\rho_3(x_3)$ have no common factor
 as in Section~4.1.1.
Thus, Eq.~\eqref{eqn:prop3a} has the form \eqref{eqn:prop3b}
 with $\ell_a=2$, $\ell_b,\ell_c\le 1$ and $b(\xi)/c(\xi)\not\in\Cset$
 since
\begin{align*}
&
(2\alpha_1-\alpha_3)(\beta_1\beta_6-\beta_2\beta_5)
 -2(\beta_2-\beta_6)(\alpha_3\beta_1-\alpha_1\beta_5)\\
&
=2\alpha_1\beta_6(\beta_1-\beta_5)
 +\alpha_3(\beta_1\beta_6+\beta_2\beta_5-2\beta_1\beta_2)\neq 0
\end{align*}
by \eqref{eqn:thm1c}.
Using Propositions~\ref{prop:3a}(iii) and \ref{prop:3b}(iii),
 we obtain Theorem~\ref{thm:main1}(iva).

We turn to the second case.
We have $n_1=1$, $\xi_{11}=-\alpha_3/\beta_6\neq 0$, $a_{11}=1$ and $n_3=0$.
We compute \eqref{eqn:bkappa} as
\[
\bar{\kappa}_{3b}(x_3)
 =\left(\left(\frac{2\beta_2}{\beta_6}-b_1\right)x_3
 +\frac{2\alpha_1}{\beta_6}\right)\left(x_3+\frac{\alpha_3}{\beta_6}\right),
\]
where $b=b_1\in\Nset$.
The zero $x_3=-\alpha_3/\beta_6$ of $\bar{\kappa}_{3b}(x_3)$ is simple if and only if
\begin{equation}
\alpha_3\beta_6b_1-2(\alpha_3\beta_2-\alpha_1\beta_6)\neq 0.
\label{eqn:4f}
\end{equation}
Hence, if $\beta_2/\beta_6-\alpha_1/\alpha_3=\tfrac{1}{2}$,
 then the zero $x_3=-\alpha_3/\beta_6$ of $\bar{\kappa}_3(x_3)$ is double.
Thus, condition~(ii) of Proposition~\ref{prop:3a} holds
 and $\beta_2/\beta_6\neq\tfrac{1}{2}$
 if $\beta_2/\beta_6-\alpha_1/\alpha_3=\tfrac{1}{2}$.
 
Suppose that $2(\beta_2/\beta_6-\alpha_1/\alpha_3)\not\in\Zset_{\ge 1}$
 and $\beta_2/\beta_6\neq\tfrac{1}{2}$.
Then condition \eqref{eqn:4f} holds,
 so that the zero $x_3=-\alpha_3/\beta_6$ of $\bar{\kappa}_{3b}(x_3)$ is simple,
 for any $b\in\Nset$.
Equation~\eqref{eqn:rhok} becomes
\[
\rho_3(x_3)
 =\left(\frac{2\beta_2}{\beta_6}-1\right)x_3+\frac{2\alpha_1}{\beta_6},
\]
so that $\deg(\kappa_{1\d})=2$, $\deg(\rho_3)=1$,  $\deg(\kappa_{3\n})=0$,
 and $\kappa_{1\d}(x_3)$ and $\rho_3(x_3)$ have no common factor as in Section~4.1.1.
Thus, Eq.~\eqref{eqn:prop3a} has the form \eqref{eqn:prop3b}
 with $\ell_a=2$, $\ell_b=1$, $\ell_c=0$ and $b(\xi)/c(\xi)\not\in\Cset$.
Using Propositions~\ref{prop:3a}(iii) and \ref{prop:3b}(iii),
 we obtain Theorem~\ref{thm:main1}(ivb).
 
We turn to the third case.
We have $n_1=1$, $\xi_{11}=0$, $a_{11}=1$ and $n_3=0$.
We compute \eqref{eqn:bkappa} as
\[
\bar{\kappa}_{3b}(x_3)
 =\left(\left(\frac{2\beta_2}{\beta_6}-b_1\right)x_3
 +\frac{2\alpha_1-\alpha_3b_1}{\beta_6}\right)x_3,
\]
where $b=b_1\in\Nset$.
The zero $x_3=0$ of $\bar{\kappa}_{3b}(x_3)$ is simple if and only if
\begin{equation}
\alpha_3b_1-2\alpha_1\neq 0.
\label{eqn:4g}
\end{equation}
Hence, if $\alpha_1/\alpha_3=\tfrac{1}{2}$,
 then the zero $x_3=0$ of $\bar{\kappa}_3(x_3)$ is double.
Thus, condition~(ii) of Proposition~\ref{prop:3a} holds
 and $\beta_2/\beta_6\not\in\Qset$
 if $\alpha_1/\alpha_3=\tfrac{1}{2}$.

Suppose that $2\alpha_1/\alpha_3\not\in\Zset_{\ge 1}$
 and $\beta_2/\beta_6\neq\tfrac{1}{2}$.
Then condition \eqref{eqn:4g} holds
 so that the zero $x_3=0$ of $\bar{\kappa}_{3b}(x_3)$ is simple for any $b\in\Nset$.
Equation~\eqref{eqn:rhok} becomes
\[
\rho_3(x_3)
 =\left(\frac{2\beta_2}{\beta_6}-1\right)x_3+\frac{2\alpha_1-\alpha_3}{\beta_6},
\]
so that $\deg(\kappa_{1\d})=2$, $\deg(\rho_3)=1$, $\deg(\kappa_{3\n})=0$,
 and $\kappa_{1\d}(x_3)$ and $\rho_3(x_3)$ have no common factor as in Section~4.1.1.
Thus, Eq.~\eqref{eqn:prop3a} has the form \eqref{eqn:prop3b}
 with $\ell_a=2$, $\ell_b=1$, $\ell_c=0$ and $b(\xi)/c(\xi)\not\in\Cset$.
Using Propositions~\ref{prop:3a}(iii) and \ref{prop:3b}(iii),
 we obtain Theorem~\ref{thm:main1}(ivc).
 
\subsubsection{Case of $\alpha_1,\alpha_3\beta_1-\alpha_1\beta_5\neq 0$ and
 $\alpha_4,\beta_2/\beta_6-\alpha_1/\alpha_3=0$}

We immediately have $\alpha_1/\alpha_3=\beta_2/\beta_6\not\in\Qset$.
By \eqref{eqn:kfH} we have
\begin{align*}
&
\kappa_{1\n}(x_3)=\frac{\beta_2}{\beta_6}\neq 0,\quad
\kappa_{1\d}(x_3)=x_3,\\
&
\kappa_{3\n}(x_3)=\frac{6}{\beta_6^2}(\beta_1\beta_6-\beta_2\beta_5)\neq 0,\quad
\kappa_{3\d}(x_3)=x_3^2\left(x_3+\frac{\alpha_3}{\beta_6}\right).
\end{align*}
Note that
\[
\beta_1\beta_6-\beta_2\beta_5=\frac{\beta_6}{\alpha_3}(\alpha_3\beta_1-\alpha_1\beta_5)\neq 0
\]
since $\beta_2=\alpha_1\beta_6/\alpha_3$.
We have $n_1=1$, $\xi_{11}=0$, $a_{11}=1$, $n_3=1$,
 $\xi_{31}=-\alpha_3/\beta_6$ and $a_{31}=1$.
Hence, condition~(i) of Proposition~\ref{prop:3a} holds
 and we obtain Theorem~\ref{thm:main1}(v).

\subsubsection{Case of $\alpha_1=0$,
 and $\beta_5/\beta_6-\alpha_4/\alpha_3$ or $\alpha_3\beta_1-\alpha_4\beta_2\neq 0$}
 
We immediately have $\beta_2/\beta_6\not\in\Qset$.
By \eqref{eqn:kfH} we have
\[
\kappa_{1\n}(x_3)=\frac{\beta_2}{\beta_6}\neq 0,\quad
\kappa_{1\d}(x_3)=x_3+\frac{\alpha_3}{\beta_6}
\]
and
\begin{align*}
&
\kappa_{3\n}(x_3)=\frac{6}{\beta_6^2}((\beta_1\beta_6-\beta_2\beta_5)x_3
 +(\alpha_3\beta_1-\alpha_4\beta_2))\not\equiv 0,\\
&
\kappa_{3\d}(x_3)=x_3^2\left(x_3+\frac{\alpha_3}{\beta_6}\right)^2
\end{align*}
if $\beta_5/\beta_6-\alpha_4/\alpha_3,\alpha_3\beta_1-\alpha_4\beta_2\neq 0$;
\[
\kappa_{3\n}(x_3)=\frac{6}{\beta_6^2}(\beta_1\beta_6-\beta_2\beta_5)\neq 0,\quad
\kappa_{3\d}(x_3)=x_3^2\left(x_3+\frac{\alpha_3}{\beta_6}\right)
\]
if $\beta_5/\beta_6-\alpha_4/\alpha_3=0$ and $\alpha_3\beta_1-\alpha_4\beta_2\neq 0$; and
\[
\kappa_{3\n}(x_3)=\frac{6}{\beta_6^2}(\beta_1\beta_6-\beta_2\beta_5)\neq 0,\quad
\kappa_{3\d}(x_3)=x_3\left(x_3+\frac{\alpha_3}{\beta_6}\right)^2
\]
if $\alpha_3\beta_1-\alpha_4\beta_2=0$ and $\beta_5/\beta_6-\alpha_4/\alpha_3\neq 0$.
Note that
 if one of $\beta_5/\beta_6-\alpha_4/\alpha_3$
 and $\alpha_3\beta_1-\alpha_4\beta_2$ is zero and the other is not, then
\[
\beta_1\beta_6-\beta_2\beta_5
=\frac{1}{\alpha_3}(\beta_6(\alpha_3\beta_1-\alpha_4\beta_2)
 -\beta_2(\alpha_3\beta_5-\alpha_4\beta_6))\neq 0.
\]
We separately discuss the three cases,
 when $\beta_5/\beta_6-\alpha_4/\alpha_3,\alpha_3\beta_1-\alpha_4\beta_2\neq 0$;
 when $\beta_5/\beta_6-\alpha_4/\alpha_3=0$ and $\alpha_3\beta_1-\alpha_4\beta_2\neq 0$;
 and when $\alpha_3\beta_1-\alpha_4\beta_2=0$ and $\beta_5/\beta_6-\alpha_4/\alpha_3\neq 0$.
For the third case, we have
 $n_1=1$, $\xi_{11}=-\alpha_3/\beta_6$, $a_{11}=1$,
 $n_3=1$, $\xi_{31}=0$ and $a_{31}=1$,
 so that condition~(i) of Proposition~\ref{prop:3a} holds.
This yields Theorem~\ref{thm:main1}(vic).

We now discuss the first case
 and additionally assume that Eq.~\eqref{eqn:thm1d} holds.
We have $n_1=1$, $\xi_{11}=-\alpha_3/\beta_6$, $a_{11}=1$,
 $n_3=1$, $\xi_{31}=0$ and $a_{31}=2$.
We compute \eqref{eqn:bkappa} as
\[
\bar{\kappa}_{3b}(x_3)
 =\left(\frac{2\beta_2}{\beta_6}-b_1\right)\left(x_3+\frac{\alpha_3}{\beta_6}\right),
\]
where $b=b_1\in\Nset$.
The zero $x_3=-\alpha_3/\beta_6$ of $\bar{\kappa}_{3b}(x_3)$ is simple for any $b\in\Nset$.
Equation~\eqref{eqn:rhok} becomes
\[
\rho_3(x_3)
 =2\left(\frac{\beta_2}{\beta_6}-1\right)x_3-\frac{\alpha_3}{\beta_6},
\]
so that $\deg(\kappa_{1\d})=1$, $\deg(\rho_3)=1$, $\deg(\kappa_{3\n})\le 1$,
 and $\kappa_{1\d}(x_3)$ and $\rho_3(x_3)$ have no common factor
 as in Section~4.1.1.
Thus, Eq.~\eqref{eqn:prop3a} has the form \eqref{eqn:prop3b}
 with $\ell_a=2$, $\ell_b=1$, $\ell_c\leq 1$ and $b(\xi)/c(\xi)\not\in\Cset$ since
\begin{align*}
&
\alpha_3(\beta_1\beta_6-\beta_2\beta_5)+2(\beta_2-\beta_6)
 (\alpha_3\beta_1-\alpha_4\beta_2)\\
&
=\alpha_3(2\beta_1\beta_2-\beta_2\beta_5-\beta_1\beta_6)
 -2\alpha_4\beta_2(\beta_2-\beta_6)\neq 0
\end{align*}
by \eqref{eqn:thm1d}.
Using Propositions~\ref{prop:3a}(iii) and \ref{prop:3b}(iii),
 we obtain Theorem~\ref{thm:main1}(via).

We turn to the second case.
We have $n_1=0$, $n_3=1$, $\xi_{31}=0$ and $a_{31}=2$,
 and compute \eqref{eqn:rhok} as
\[
\rho_3(x_3)
=\left(\frac{2\beta_2}{\beta_6}-1\right)x_3-\frac{\alpha_3}{\beta_6},
\]
so that $\deg(\kappa_{1\d})=1$, $\deg(\rho_3)=1$, $\deg(\kappa_{3\n})=0$,
 and $\kappa_{1\d}(x_3)$ and $\rho_3(x_3)$ have no common factor as in Section~4.1.1.
Thus, Eq.~\eqref{eqn:prop3a} has the form \eqref{eqn:prop3b}
 with $\ell_a=2$, $\ell_b=1$, $\ell_c=0$ and $b(\xi)/c(\xi)\not\in\Cset$.
Using Propositions~\ref{prop:3a}(iii) and \ref{prop:3b}(iii),
  we obtain Theorem~\ref{thm:main1}(vib).

\subsection{Case of $\alpha_3=0$, $\beta_6\neq 0$,
 and $\alpha_1\neq 0$ or $\beta_2/\beta_6\not\in\Qset$}
We next consider the case of $\alpha_3=0$, $\beta_6\neq 0$
 and $\alpha_1\neq 0$ or $\beta_2/\beta_6\not\in\Qset$.
From \eqref{eqn:kfH} we easily see that $\deg(\kappa_{1\d})>\deg(\kappa_{1\n})$, and compute
\[
\Omega(x_3)=\exp\left(\int\frac{\beta_2x_3+\alpha_1}{\beta_6x_3^2}\d x_3\right)
=\exp\left(-\frac{\alpha_1}{\beta_6 x_3}\right)x_3^{\beta_2/\beta_6},
\]
so that condition~(H1) holds.
We separately consider cases~(b1) and (b2) stated at the beginning of this section
 and check the hypotheses of Proposition~\ref{prop:3a}.

\subsubsection{Case of $\alpha_1\alpha_4\neq 0$}
By \eqref{eqn:kfH} we have
\begin{align*}
&
\kappa_{1\n}(x_3)=\frac{\beta_2x_3+\alpha_1}{\beta_6}\not\equiv 0,\quad
\kappa_{1\d}(x_3)=x_3^2,\\
&
\kappa_{3\n}(x_3)=\frac{6}{\beta_6^2}((\beta_1\beta_6-\beta_2\beta_5)x_3^2
 -(\alpha_1\beta_5+\alpha_4\beta_2)x_3-\alpha_1\alpha_4)\not\equiv 0,\quad
\kappa_{3\d}(x_3)=x_3^5,
\end{align*}
from which $n_1=1$, $\xi_{11}=0$, $a_{11}=3$ and $n_3=0$.
We compute \eqref{eqn:bkappa} as
\[
\bar{\kappa}_{3b}(x_3)
 =\left(\left(\frac{2\beta_2}{\beta_6}-b_1-2\right)x_3
 +\frac{2\alpha_1}{\beta_6}\right)x_3,
\]
where $b=b_1\in\Nset$.
The zero $x_3=0$ of $\bar{\kappa}_{3b}(x_3)$ is simple for any $b\in\Nset$.
Equation~\eqref{eqn:rhok} becomes
\[
\rho_3(x_3)
 =\left(\frac{2\beta_2}{\beta_6}-3\right)x_3+\frac{2\alpha_1}{\beta_6},
\]
so that $\deg(\kappa_{1\d})=2$, $\deg(\rho_3)\le 1$,
 and $\kappa_{1\d}(x_3)$ and $\rho_3(x_3)$ have no common factor
 since otherwise $\rho_3(x_3)$ has a root at $x_3=0$
 and consequently $\alpha_1=0$.
If $\beta_1\beta_6-\beta_2\beta_5\neq 0$,
 then $\deg(\kappa_{3\n})=2$ and
 Eq.~\eqref{eqn:prop3a} has the form \eqref{eqn:prop3b} with $\ell_a=2$, 
$\ell_b\le 1$, $\ell_c=2$ and $b(\xi)/c(\xi)\not\in\Cset$.
On the other hand, if $\beta_1\beta_6-\beta_2\beta_5=0$
 and $2\alpha_1\beta_5+3\alpha_4\beta_6\neq 0$,
 then $\deg(\kappa_{3\n})\le 1$ and Eq.~\eqref{eqn:prop3a} has the form \eqref{eqn:prop3b}
 with $\ell_a=2$, $\ell_b,\ell_c\le 1$ and $b(\xi)/c(\xi)\not\in\Cset$ since
\[
2\alpha_1(\beta_1\beta_6-\beta_2\beta_5)-\alpha_1\alpha_4(2\beta_2-\beta_6)
 =\alpha_1(2\alpha_1\beta_5+3\alpha_4\beta_6)\neq 0.
\]
Using Propositions~\ref{prop:3a}(iii) and \ref{prop:3b}(iii),
 we obtain Theorem~\ref{thm:main1}(vii).

\subsubsection{Case of $\alpha_1\neq 0$, $\alpha_4=0$ and $\beta_1\neq \beta_5$}
By \eqref{eqn:kfH} we have
\[
\kappa_{1\n}(x_3)=\frac{\beta_2x_3+\alpha_1}{\beta_6}\not\equiv 0,\quad
\kappa_{1\d}(x_3)=x_3^2
\]
and
\[
\kappa_{3\n}(x_3)=\frac{6}{\beta_6^2}((\beta_1\beta_6-\beta_2\beta_5)x_3
 -\alpha_1\beta_5)\not\equiv 0,\quad
\kappa_{3\d}(x_3)=x_3^4
\]
if $\beta_5\neq 0$; and
\[
\kappa_{3\n}(x_3)=\frac{6\beta_1}{\beta_6}\neq 0,\quad
\kappa_{3\d}(x_3)=x_3^3
\]
if $\beta_5=0$ and $\beta_1\neq 0$.
We separately discuss the two cases,
 when $\beta_5\neq 0$; and when $\beta_5=0$ and $\beta_1\neq 0$.
 
We begin with the first case and additionally assume that $\beta_1\neq \beta_5$.
We have $n_1=1$, $\xi_{11}=0$, $a_{11}=2$ and $n_3=0$, and
 compute \eqref{eqn:bkappa} as
\[
\bar{\kappa}_{3b}(x_3)
 =\left(\left(\frac{2\beta_2}{\beta_6}-b_1-1\right)x_3
 +\frac{2\alpha_1}{\beta_6}\right)x_3,
\]
where $b=b_1\in\Nset$.
The zero $x_3=0$ of $\bar{\kappa}_{3b}(x_3)$ is simple for any $b\in\Nset$.
Equation~\eqref{eqn:rhok} becomes
\[
\rho_3(x_3)
 =2\left(\frac{\beta_2}{\beta_6}-1\right)x_3+\frac{2\alpha_1}{\beta_6},
\]
so that $\deg(\kappa_{1\d})=2$, $\deg(\rho_3),\deg(\kappa_{3\n})\le 1$,
 and $\kappa_{1\d}(x_3)$ and $\rho_3(x_3)$ have no common factor  as in Section~4.2.1.
Thus, Eq.~\eqref{eqn:prop3a} has the form \eqref{eqn:prop3b}
  with $\ell_a=2$, $\ell_b,\ell_c\le 1$ and $b(\xi)/c(\xi)\not\in\Cset$ since
\[
\alpha_1(\beta_1\beta_6-\beta_2\beta_5)+\alpha_1\beta_5(\beta_2-\beta_6)
=\alpha_1\beta_6(\beta_1-\beta_5)\neq 0.
\]
Using Propositions~\ref{prop:3a}(iii) and \ref{prop:3b}(iii),
 we obtain Theorem~\ref{thm:main1}(viiia).
  
We turn to the second case
 and additionally assume that $\beta_2/\beta_6\neq\tfrac{1}{2}$.
We have $n_1=1$, $\xi_{11}=0$, $a_{11}=1$ and $n_3=0$, and compute \eqref{eqn:bkappa} as
\[
\bar{\kappa}_{3b}(x_3)
 =\left(\left(\frac{2\beta_2}{\beta_6}-b_1\right)x_3
 +\frac{2\alpha_1}{\beta_6}\right)x_3,
\]
where $b=b_1\in\Nset$.
The zero $x_3=0$ of $\bar{\kappa}_{3b}(x_3)$ is simple for any $b\in\Nset$
 and Eq.~\eqref{eqn:rhok} becomes
\[
\rho_3(x_3)
 =\left(\frac{2\beta_2}{\beta_6}-1\right)x_3+\frac{2\alpha_1}{\beta_6}.
\]
Thus, $\deg(\kappa_{1\d})=2$, $\deg(\rho_3)=1$, $\deg(\kappa_{3\n})=0$,
 and $\kappa_{1\d}(x_3)$ and $\rho_3(x_3)$ have no common factor as in Section~4.2.1.
Thus, Eq.~\eqref{eqn:prop3a} has the form \eqref{eqn:prop3b}
 with $\ell_a=2$, $\ell_b=1$, $\ell_c=0$ and $b(\xi)/c(\xi)\not\in\Cset$.
Using Propositions~\ref{prop:3a}(iii) and \ref{prop:3b}(iii),
 we obtain Theorem~\ref{thm:main1}(viiib)
 and complete the proof of Theorem~\ref{thm:main1}.
\qed


\section{Proof of Theorem~\ref{thm:main2}}

Based on Corollary~\ref{cor:PD4},
 we only have to prove the meromorphic nonintegrability of \eqref{eqn:PD4p0}
 near the $r_1$-axis as in the proof of Theorem~\ref{thm:main1}.
We set $\xi=r_2$ and $\eta=r_1$ and apply Theorem~\ref{thm:c} to \eqref{eqn:PD4p0}
 with assistance of Propositions~\ref{prop:3a} and \ref{prop:3b}.
We divide the proof into the following eight cases
 and check $\deg(\kappa_{1\d})>\deg(\kappa_{1\n})$,
 condition~(H1) and the hypotheses of Proposition~\ref{prop:3a},
 using Proposition~\ref{prop:3b} if necessary:
\begin{enumerate}
\setlength{\leftskip}{-1.8em}
\item[(a)]
$\alpha_4\beta_5\neq 0$,  and $\alpha_2/\alpha_4$ or $\beta_2/\beta_5\not\in\Qset$:
\begin{enumerate}
\setlength{\leftskip}{-1.8em}
\item[(a1)]
$\alpha_2,\alpha_1\alpha_4-\alpha_2\alpha_3,
 \beta_2/\beta_5-\alpha_2/\alpha_4,\beta_6/\beta_5-\alpha_3/\alpha_4\neq 0$;
\item[(a2)]
$\alpha_2,\alpha_1\alpha_4-\alpha_2\alpha_3,
 \alpha_1\beta_5+\alpha_2\beta_6-\alpha_3\beta_2-\alpha_4\beta_3\neq 0$
 and $\beta_2/\beta_5=\alpha_2/\alpha_4$;
\item[(a3)]
$\alpha_2,\alpha_1\alpha_4-\alpha_2\alpha_3,\beta_2/\beta_5-\alpha_2/\alpha_4\neq 0$
 and $\beta_6/\beta_5=\alpha_3/\alpha_4$;
\item[(a4)]
$\alpha_2,\beta_2/\beta_5-\alpha_2/\alpha_4,\beta_6/\beta_5-\alpha_3/\alpha_4\neq 0$
 and $\alpha_1\alpha_4=\alpha_2\alpha_3$;
\item[(a5)]
$\alpha_1\beta_2,\beta_6/\beta_5-\alpha_3/\alpha_4\neq 0$ and $\alpha_2=0$;
\item[(a6)]
$\beta_2,\beta_3/\beta_2-\alpha_3/\alpha_4\neq 0$ and $\alpha_1,\alpha_2=0$;
\end{enumerate}
\item[(b)]
$\alpha_4=0$, $\beta_5\neq 0$, and $\alpha_2\neq 0$ or $\beta_2/\beta_5\not\in\Qset$:
\begin{enumerate}
\setlength{\leftskip}{-1.8em}
\item[(b1)]
$\alpha_2\alpha_3\neq 0$;
\item[(b2)]
$\alpha_1\alpha_2\neq 0$ and $\alpha_3=0$.
\end{enumerate}
\end{enumerate}
Equation~\eqref{eqn:fol} becomes
\begin{equation}\label{eqn:folFH}
r'_1=\frac{r_1(\alpha_1 r_1^2+\alpha_2r_2^2+\beta_1r_1^4+\beta_2r_2^4+\beta_3r_1^2r_2^2)}
 {r_2(\alpha_3 r_1^2+\alpha_4r_2^2+\beta_4r_1^4+\beta_5r_2^4+\beta_6r_1^2r_2^2)},
\end{equation}
where the prime represents differentiation with respect to $r_2$.
We take $r_1=0$ as the integral curve, i.e., $\varphi(r_2)=0$,
 and compute \eqref{eqn:kappa} as
\begin{equation}
\begin{split}
\kappa_1(r_2)=&\frac{\beta_2r_2^2+\alpha_2}{r_2(\beta_5r_2^2+\alpha_4)},\quad
\kappa_2(r_2)=0,\\
\kappa_3(r_2)=&\frac{6((\beta_3\beta_5-\beta_2\beta_6)r_2^4
 +(\alpha_1\beta_5-\alpha_2\beta_6-\alpha_3\beta_2+\alpha_4\beta_3)r_2^2
 +\alpha_1\alpha_4-\alpha_2\alpha_3)}
 {r_2^3(\beta_5r_2^2+\alpha_4)^2},\\
\kappa_4(r_2)=&0,\quad
\cdots.
\end{split}
\label{eqn:kdH}
\end{equation}
Henceforth we take $k=3$
 in application of Theorem~\ref{thm:c} and Proposition~\ref{prop:3a},
 as in Section~4.

\subsection{Case of $\alpha_4\beta_5\neq 0$,
 and $\alpha_2/\alpha_4$ or $\beta_2/\beta_5\not\in\Qset$}

We first consider the case of $\alpha_4\beta_5\neq 0$
 and assume that $\alpha_2/\alpha_4$ or $\beta_2/\beta_5\not\in\Qset$.
From \eqref{eqn:kdH} we see that  $\deg(\kappa_{1\d})>\deg(\kappa_{1\n})$, and compute
\[
\Omega(r_2)=\exp\left(\int\frac{\beta_2r_2^2+\alpha_2}{r_2(\beta_5r_2^2+\alpha_4)}\d r_2\right)
=r_2^{\alpha_2/\alpha_4}(\beta_5r_2^2+\alpha_4)^{\beta_2/2\beta_5-\alpha_2/2\alpha_4},
\]
so that condition~(H1) holds
 since $\alpha_2/\alpha_4$ or $\beta_2/\beta_5-\alpha_2/\alpha_4\not\in\Qset$.
We now only have to check the hypotheses of Proposition~\ref{prop:3a}.
We separately consider cases~(a1)-(a6) stated above. 

\subsubsection{Case of $\alpha_2,\alpha_1\alpha_4-\alpha_2\alpha_3,
 \beta_2/\beta_5-\alpha_2/\alpha_4,\beta_6/\beta_5-\alpha_3/\alpha_4\neq 0$}

By \eqref{eqn:kdH} we have
\begin{align*}
\kappa_{1\n}(r_2)=&\frac{\beta_2r_2^2+\alpha_2}{\beta_5}\not\equiv 0,\quad
\kappa_{1\d}(r_2)=r_2\left(r_2^2+\frac{\alpha_4}{\beta_5}\right),\\
\kappa_{3\n}(r_2)=&\frac{6}{\beta_5^2}((\beta_3\beta_5-\beta_2\beta_6)r_2^4\\
& +(\alpha_1\beta_5-\alpha_2\beta_6-\alpha_3\beta_2+\alpha_4\beta_3)r_2^2
 +\alpha_1\alpha_4-\alpha_2\alpha_3)\not\equiv 0,\\
\kappa_{3\d}(r_2)=&r_2^3\left(r_2^2+\frac{\alpha_4}{\beta_5}\right)^2.
\end{align*}
Note that $\kappa_{3\n}(r_2)$ has no zero at $r_2=0,\pm\sqrt{-\alpha_4/\beta_5}$
 since $\alpha_1\alpha_4-\alpha_2\alpha_3\neq 0$ and
\[
\kappa_{3\n}\left(\pm\sqrt{\frac{\alpha_4}{\beta_5}}\right)
=\frac{1}{\beta_5^4}(\alpha_4\beta_2-\alpha_2\beta_5)(\alpha_3\beta_5-\alpha_4\beta_6)\neq 0.
\]
We have $n_1=3$, $\xi_{11}=0$, $\xi_{12}=\sqrt{-\alpha_4/\beta_5}$,
 $\xi_{13}=-\sqrt{-\alpha_4/\beta_5}$, $a_{11}=2$, $a_{12},a_{13}=1$ and $n_3=0$.
We compute \eqref{eqn:bkappa} as
\begin{align*}
\bar{\kappa}_{3b}(r_2)
 =&\biggl(\left(\frac{2\beta_2}{\beta_5}-(b_1+b_2+b_3)-1\right)r_2^2\\
&  -(b_2-b_3)\sqrt{-\frac{\alpha_4}{\beta_5}}r_2
 +\frac{2\alpha_2-\alpha_4(b_1+1)}{\beta_5}\biggr)
 r_2\left(r_2^2+\frac{\alpha_4}{\beta_5}\right),
\end{align*}
where $b=(b_1,b_2,b_3)\in\Nset^3$.
We see that the zero $r_2=0$
 (resp. the zeros $r_2=\sqrt{-\alpha_4/\beta_5}$ and $-\sqrt{-\alpha_4/\beta_5}$)
 of $\bar{\kappa}_{3b}(r_2)$
 is (resp. are) simple if and only if
\begin{align}
\alpha_4(b_1+1)-2\alpha_2\neq 0\quad
(&\mbox{resp. $\alpha_4\beta_5 b_2-(\alpha_4\beta_2-\alpha_2\beta_5)$}\notag\\
& \mbox{and $\alpha_4\beta_5 b_3-(\alpha_4\beta_2-\alpha_2\beta_5)\neq 0$}).
\label{eqn:5a}
\end{align}
Hence, if $\alpha_2/\alpha_4=1$ (resp. $\beta_2/\beta_5-\alpha_2/\alpha_4=1$),
 then the zero $r_2=0$
 (resp. the zeros $r_2=\pm\sqrt{-\alpha_4/\beta_5}$) of $\bar{\kappa}_3(r_2)$
 is (resp. are) double
 and the zeros $r_2=\pm\sqrt{-\alpha_4/\beta_5}$
 (resp. the zero $r_2=0$) of $\bar{\kappa}_{3b}(r_2)$
 are (resp. is) simple for any $b\in\Nset^3$
 since $\alpha_2/\alpha_4$ or $\beta_2/\beta_5\not\in\Qset$.
Thus, condition~(ii) of Proposition~\ref{prop:3a} holds
 if $\alpha_2/\alpha_4$ or $\beta_2/\beta_5-\alpha_2/\alpha_4=1$.

We assume that $2\alpha_2/\alpha_4\not\in\Zset_{\ge 2}$
 and $\beta_2/\beta_5-\alpha_2/\alpha_4\not\in\Zset_{\ge 1}$.
All conditions in \eqref{eqn:5a} hold,
 so that the zeros  $r_2=0$ and $\pm\sqrt{-\alpha_4/\beta_5}$ of $\bar{\kappa}_{3b}(r_2)$
 are simple, for any $b\in\Nset^3$.
Equation~\eqref{eqn:rhok} becomes
\[
\rho_3(r_2)
 =2\left(\frac{\beta_2}{\beta_5}-2\right)r_2^2+\frac{2(\alpha_2-\alpha_4)}{\beta_5}.
\]
Thus, $\deg(\kappa_{1\d})=3$, $\deg(\rho_3)=0$ or $2$,
 and $\kappa_{1\d}(r_2)$ and $\rho_3(r_2)$ have no common factor
 since otherwise $\rho_3(r_2)$ has roots at $r_2=0$ or $\pm\sqrt{-\alpha_4/\beta_5}$
 and consequently
 \[
\alpha_2/\alpha_4=1\quad\mbox{or}\quad
\beta_2/\beta_5-\alpha_2/\alpha_4=1.
\]
If $\beta_3\beta_5-\beta_2\beta_6\neq 0$,
 then $\deg(\kappa_{3\n})=4$ and Eq.~\eqref{eqn:prop3a} has the form \eqref{eqn:prop3b}
 with $\ell_a=3$, $\ell_b=0$ or $2$, $\ell_c=4$ and $b(\xi)/c(\xi)\not\in\Cset$.
On the other hand, if $\beta_3\beta_5-\beta_2\beta_6=0$ and Eq.~\eqref{eqn:thm2a} holds,
 then Eq.~\eqref{eqn:prop3a} has the form \eqref{eqn:prop3b}
 with $\ell_a=3$, $\ell_b,\ell_c=0$ or $2$ and $b(\xi)/c(\xi)\not\in\Cset$ since
\begin{align*}
&
(\alpha_2-\alpha_4)(\alpha_1\beta_5-\alpha_2\beta_6-\alpha_3\beta_2+\alpha_4\beta_3)
 -(\beta_2-2\beta_5)(\alpha_1\alpha_4-\alpha_2\alpha_3)\\
&=
(\alpha_1-\alpha_3)(\alpha_2\beta_5-\alpha_4\beta_2)
 -(\alpha_2-\alpha_4)(\alpha_2\beta_6-\alpha_4\beta_3)
 +(\alpha_1\alpha_4-\alpha_2\alpha_3)\beta_5\neq 0
\end{align*}
by \eqref{eqn:thm2a}.
Using Propositions~\ref{prop:3a}(iii) and \ref{prop:3b}(iii),
 we obtain Theorem~\ref{thm:main2}(i).

\begin{rmk}
\label{rmk:5a}
In the above case,
 we can also take $k=5$ in application of Theorem~$\ref{thm:c}$ and Proposition~$\ref{prop:3a}$
 but cannot improve our result, as in Remark~$\ref{rmk:4a}$.
Actually, we have
\begin{align*}
\kappa_{5\n}(r_2)=&\frac{120}{\beta_5^3}
((\beta_1\beta_5^2-(\beta_2\beta_4+\beta_3\beta_6)\beta_5+\beta_2\beta_6^2)r_2^6
-(\beta_5\beta_6\alpha_1+(\beta_4\beta_5-\beta_6^2)\alpha_2\\
&\qquad
 -(2\beta_2\beta_6-\beta_3\beta_5)\alpha_3
 -(2\beta_1\beta_5-\beta_2\beta_4-\beta_3\beta_6)\alpha_4))r_2^4\\
&
+(\alpha_4^2\beta_1+\alpha_3^2\beta_2-\alpha_3\alpha_4\beta_3-\alpha_2\alpha_4\beta_4
 -\alpha_1\alpha_3\beta_5\\
& \qquad
 +(2\alpha_2\alpha_3-\alpha_1\alpha_4)\beta_6)r_2^2
 +\alpha_3(\alpha_2\alpha_3-\alpha_1\alpha_4)),\\
\kappa_{5\d}(r_2)=&r_2^5\left(r_2^2+\frac{\alpha_4}{\beta_5}\right)^3,
\end{align*}
from which $n_1=3$, $\xi_{11}=0$, $\xi_{12}=\sqrt{-\alpha_4/\beta_5}$,
 $\xi_{13}=-\sqrt{-\alpha_4/\beta_5}$, $a_{11}=4$, $a_{12},a_{13}=2$ and $n_5=0$.
We compute \eqref{eqn:bkappa} as
\begin{align*}
\bar{\kappa}_{5b}(r_2)
  =&\biggl(\left(\frac{4\beta_2}{\beta_5}-(b_1+b_2+b_3)-5\right)r_2^2\\
&  -(b_2-b_3)\sqrt{-\frac{\alpha_4}{\beta_5}}r_2
 +\frac{4\alpha_2-\alpha_4(b_1+3)}{\beta_5}\biggr)
 r_2\left(r_2^2+\frac{\alpha_4}{\beta_5}\right),
\end{align*}
where $b=(b_1,b_2,b_3)\in\Nset^3$.
We see that the zero $r_2=0$
 $($resp. the zeros $r_2=\sqrt{-\alpha_4/\beta_5}$ and $-\sqrt{-\alpha_4/\beta_5})$
 of $\bar{\kappa}_{3b}(r_2)$
 is $($resp. are$)$ simple if and only if
\begin{align*}
\alpha_4(b_1+3)-4\alpha_2\neq 0\quad
(&\mbox{resp. $\alpha_4\beta_5(b_2+1)-2(\alpha_4\beta_2-\alpha_2\beta_5)$}\notag\\
& \mbox{and $\alpha_4\beta_5(b_3+1)-2(\alpha_4\beta_2-\alpha_2\beta_5)\neq 0$}).
\end{align*}
Hence, if $\alpha_2/\alpha_4=1$
 $($resp. $\beta_2/\beta_5-\alpha_2/\alpha_4=1)$, then the zero $r_2=0$
 $($resp. the zeros $r_2=\pm\sqrt{-\alpha_4/\beta_5})$of $\bar{\kappa}_3(r_2)$
 is $($resp. are$)$ double
 and the zeros $r_2=\pm\sqrt{-\alpha_4/\beta_5}$
 $($resp. the zero $r_2=0)$ of $\bar{\kappa}_{3b}(r_2)$
 are $($resp. is$)$ simple for any $b\in\Nset^3$. 
Thus, condition~{\rm(ii)} of Proposition~$\ref{prop:3a}$ holds
 if $\alpha_2/\alpha_4=1$ or $\beta_2/\beta_5-\alpha_2/\alpha_4=1$.
So we assume that $4\alpha_2/\alpha_4\not\in\Zset_{\ge 4}$
 and $2(\beta_2/\beta_5-\alpha_2/\alpha_4)\not\in\Zset_{\ge 2}$
 and repeat the above argument,
 but still need to exclude the case of $2\alpha_2/\alpha_4\in\Zset_{\ge 2}$
 and $\beta_2/\beta_5-\alpha_2/\alpha_4\not\in\Zset_{\ge 1}$.
We do not repeat such an argument for \eqref{eqn:PD4} below.
\end{rmk}

\subsubsection{Case of $\alpha_2,\alpha_1\alpha_4-\alpha_2\alpha_3,
 \alpha_1\beta_5+\alpha_2\beta_6-\alpha_3\beta_2-\alpha_4\beta_3\neq 0$
 and $\beta_2/\beta_5=\alpha_2/\alpha_4$}
We immediately have $\alpha_2/\alpha_4=\beta_2/\beta_5\not\in\Qset$ and
\[
\alpha_1\beta_5-\alpha_3\beta_2
 =\frac{\beta_5}{\alpha_4}(\alpha_1\alpha_4-\alpha_2\alpha_3)\neq 0.
\]
By \eqref{eqn:kdH} we have
\begin{align*}
&
\kappa_{1\n}(r_2)=\frac{\beta_2}{\beta_5}\neq 0,\quad
\kappa_{1\d}(r_2)=r_2,\\
&
\kappa_{3\n}(r_2)=\frac{6}{\beta_5^2}((\beta_3\beta_5-\beta_2\beta_6)r_2^2
 +\alpha_1\beta_5-\alpha_3\beta_2)\not\equiv 0,\quad
\kappa_{3\d}(r_2)=r_2^3\left(r_2^2+\frac{\alpha_4}{\beta_5}\right).
\end{align*}
Note that 
\[
\kappa_{3\n}\left(\pm\sqrt{-\frac{\alpha_4}{\beta_5}}\right)
 =\frac{6}{\beta_5}(\alpha_1\beta_5+\alpha_2\beta_6-\alpha_3\beta_2-\alpha_4\beta_3)\neq 0.
 \]
We have $n_1=1$, $\xi_{11}=0$, $a_{11}=2$,
 $n_3=2$, $\xi_{31}=\sqrt{-\alpha_4/\beta_5}$, $\xi_{32}=-\sqrt{-\alpha_4/\beta_5}$,
 and $a_{31}=a_{32}=1$,
 so that condition~(i) of Proposition~\ref{prop:3a} holds.
Thus, we obtain Theorem~\ref{thm:main2}(ii).

\subsubsection{Case of $\alpha_2,\alpha_1\alpha_4-\alpha_2\alpha_3,
 \beta_2/\beta_5-\alpha_2/\alpha_4\neq 0$
 and $\beta_6/\beta_5=\alpha_3/\alpha_4$}

By \eqref{eqn:kdH} we have
\[
\kappa_{1\n}(r_2)=\frac{\beta_2r_2^2+\alpha_2}{\beta_5}\not\equiv 0,\quad
\kappa_{1\d}(r_2)=r_2\left(r_2^2+\frac{\alpha_4}{\beta_5}\right)
\]
and
\[
\kappa_{3\n}(r_2)=\frac{6}{\beta_5^2}((\beta_3\beta_5-\beta_2\beta_6)r_2^2
 +\alpha_1\beta_5-\alpha_2\beta_6)\not\equiv 0,\quad
\kappa_{3\d}(r_2)=r_2^3\left(r_2^2+\frac{\alpha_4}{\beta_5}\right)
\]
if $\beta_6/\beta_5-\alpha_1/\alpha_2,
 \alpha_1\beta_5-\alpha_2\beta_6+\alpha_3\beta_2-\alpha_4\beta_3\neq 0$;
\[
\kappa_{3\n}(r_2)=\frac{6}{\beta_5^2}(\beta_3\beta_5-\beta_2\beta_6)\neq 0,\quad
\kappa_{3\d}(r_2)=r_2\left(r_2^2+\frac{\alpha_4}{\beta_5}\right)
\]
if $\beta_6/\beta_5-\alpha_1/\alpha_2=0$
 and $\alpha_1\beta_5-\alpha_2\beta_6+\alpha_3\beta_2-\alpha_4\beta_3\neq 0$;
 and
\[
\kappa_{3\n}(r_2)=\frac{6}{\beta_5^2}(\beta_3\beta_5-\beta_2\beta_6)\neq 0,\quad
\kappa_{3\d}(r_2)=r_2^3
\]
if $\alpha_1\beta_5-\alpha_2\beta_6+\alpha_3\beta_2-\alpha_4\beta_3=0$
 and $\beta_6/\beta_5-\alpha_1/\alpha_2\neq 0$.
Note that if $\beta_6/\beta_5\neq\alpha_1/\alpha_2$, then
\[
\alpha_1\beta_5-\alpha_2\beta_6+\alpha_3\beta_2-\alpha_4\beta_3
 \neq\alpha_3\beta_2-\alpha_4\beta_3
 =-\frac{\alpha_4}{\beta_5}(\beta_3\beta_5-\beta_2\beta_6)
\]
and that if $\beta_6/\beta_5-\alpha_1/\alpha_2=0$, then
\[
\alpha_1\beta_5-\alpha_2\beta_6+\alpha_3\beta_2-\alpha_4\beta_3
 =\alpha_3\beta_2-\alpha_4\beta_3
 =-\frac{\alpha_4}{\beta_5}(\beta_3\beta_5-\beta_2\beta_6),
\]
since $\beta_6/\beta_5=\alpha_3/\alpha_4$.
We separately discuss the three cases,
 when $\beta_6/\beta_5-\alpha_1/\alpha_2,\linebreak
  \alpha_1\beta_5-\alpha_2\beta_6+\alpha_3\beta_2-\alpha_4\beta_3\neq 0$;
 when $\beta_6/\beta_5-\alpha_1/\alpha_2=0$
 and $\alpha_1\beta_5-\alpha_2\beta_6+\alpha_3\beta_2-\alpha_4\beta_3\neq 0$;
 and when $\alpha_1\beta_5-\alpha_2\beta_6+\alpha_3\beta_2-\alpha_4\beta_3=0$
 and $\beta_6/\beta_5-\alpha_1/\alpha_2\neq 0$.

We discuss the first case. 
We have $n_1=0$, $\xi_{11}=0$, $a_{11}=2$ and $n_3=0$
 and compute \eqref{eqn:bkappa} as
\begin{align*}
\bar{\kappa}_{3b}(r_2)
=\biggl(\left(\frac{2\beta_2}{\beta_5}-b_1-1\right)r_2^2
  +\frac{2\alpha_2-\alpha_4(b_1+1)}{\beta_5}\biggr)r_2,
\end{align*}
where $b=b_1\in\Nset$.
The zero $r_2=0$ of $\bar{\kappa}_{3b}(r_2)$ is simple if and only if
\begin{align}
\alpha_4(b_1+1)-2\alpha_2\neq 0.
\label{eqn:5c1}
\end{align}
Hence, if $\alpha_2/\alpha_4=1$,
 then the zero $r_2=0$ of $\bar{\kappa}_3(r_2)$ is double,
 so that condition~(ii) of Proposition~\ref{prop:3a} holds.

Suppose that $2\alpha_2/\alpha_4\not\in\Zset_{\ge 2}$
 and Eq.~\eqref{eqn:thm2b} holds.
Then condition \eqref{eqn:5c1} holds,
 so that the zero  $r_2=0$ of $\bar{\kappa}_{3b}(r_2)$ is simple, for any $b\in\Nset$.
Equation~\eqref{eqn:rhok} becomes
\[
\rho_3(r_2)
 =2\left(\frac{\beta_2}{\beta_5}-1\right)r_2^2+\frac{2(\alpha_2-\alpha_4)}{\beta_5},
\]
so that $\deg(\kappa_{1\d})=3$, $\deg(\rho_3),\deg(\kappa_{3\n})=0$ or $2$,
 and $\kappa_{1\d}(r_2)$ and $\rho_3(r_2)$ have no common factor as in Section~5.1.1.
Thus, Eq.~\eqref{eqn:prop3a} has the form \eqref{eqn:prop3b}
 with $\ell_a=3$, $\ell_b,\ell_c=0$ or $2$ and $b(\xi)/c(\xi)\not\in\Cset$ since
\begin{align*}
&
(\alpha_2-\alpha_4)(\beta_3\beta_5-\beta_2\beta_6)
 -(\beta_2-\beta_5)(\alpha_1\beta_5-\alpha_2\beta_6)\\
&=
-\beta_5(\alpha_1(\beta_2-\beta_5) -\alpha_2(\beta_3-\beta_6)
 -\alpha_3\beta_6+\alpha_4\beta_3)\neq 0
\end{align*}
by \eqref{eqn:thm2b}.
Using Propositions~\ref{prop:3a}(iii) and \ref{prop:3b}(iii),
 we obtain Theorem~\ref{thm:main2}(iiia).

We turn to the second case and additionally assume that $\beta_2\neq 0$.
We have $n_1,n_3=0$ and
\[
\rho_3(r_2)=\frac{2(\beta_2r_2^3+\alpha_2)}{\beta_5},
\]
so that $\deg(\kappa_{1\d})=3$, $\deg(\rho_3)=2$, $\deg(\kappa_{3\n})=0$,
 and $\kappa_{1\d}(r_2)$ and $\rho_3(r_2)$ have no common factor as in Section~5.1.1.
Thus, Eq.~\eqref{eqn:prop3a} has the form \eqref{eqn:prop3b}
 with $\ell_a=3$, $\ell_b=2$, $\ell_c=0$ and $b(\xi)/c(\xi)\not\in\Cset$.
Using Propositions~\ref{prop:3a}(iii) and \ref{prop:3b}(iii),
 we obtain Theorem~\ref{thm:main2}(iiib). 

We turn to the third case.
We have $n_1=3$, $\xi_{11}=0$, $\xi_{12}=\sqrt{-\alpha_4/\beta_5}$, $\xi_{13}=-\sqrt{-\alpha_4/\beta_5}$,
 $a_{11}=2$, $a_{12},a_{13}=-1$ and $n_3=0$
 and compute \eqref{eqn:bkappa} as
\begin{align*}
\bar{\kappa}_{3b}(r_2)
=&\biggl(\left(\frac{2\beta_2}{\beta_5}-(b_1+b_2+b_3)+3\right)r_2^2\\
& -\sqrt{-\frac{\alpha_4}{\beta_5}}\,(b_2-b_3)r_2
 +\frac{2\alpha_2-\alpha_4(b_1+1)}{\beta_5}\biggr)r_2\left(r_2^2+\frac{\alpha_4}{\beta_5}\right),
\end{align*}
where $b=(b_1,b_2,b_3)\in\Nset^3$.
The zero $r_2=0$ (resp. the zeros $r_2=\pm\sqrt{\alpha_4/\beta_5}$)
 of $\bar{\kappa}_{3b}(r_2)$ is (resp. are) simple if and only if
\begin{align}
\alpha_4(b_1+1)-2\alpha_2\neq 0\quad
(& \mbox{resp. $\alpha_4\beta_5(b_2-2)-(\alpha_4\beta_2-\alpha_2\beta_5)$}\notag\\
& \mbox{and $\alpha_4\beta_5(b_3-2)-(\alpha_4\beta_2-\alpha_2\beta_5)\neq 0$}).
\label{eqn:5c2}
\end{align}
Hence, if $\alpha_2/\alpha_4=1$ (resp. $\beta_2/\beta_5-\alpha_2/\alpha_4=-1$),
 then the zero $r_2=0$ (resp. the zeros $r_2=\pm\sqrt{\alpha_4/\beta_5}$)
 of $\bar{\kappa}_3(r_2)$ is (resp. are) double
 and the zeros $r_2=\pm\sqrt{\alpha_4/\beta_5}$) of $\bar{\kappa}_3(r_2)$
 (resp the zero $r_2=0$) of $\bar{\kappa}_3(r_2)$ are (resp. is) simple for any $b\in\Nset^3$.
Thus, condition~(ii) of Proposition~\ref{prop:3a} holds
 if $\alpha_2/\alpha_4=1$ or $\beta_2/\beta_5-\alpha_2/\alpha_4=-1$.

Suppose that $2\alpha_2/\alpha_4\not\in\Zset_{\ge 2}$,
 $\beta_2/\beta_5-\alpha_2/\alpha_4\not\in\Zset_{\ge -1}$ and $\beta_2\neq 0$.
Then all conditions in \eqref{eqn:5c2} hold,
 so that the zeros  $r_2=0$ and $\pm\sqrt{-\alpha_4/\beta_5}$ of $\bar{\kappa}_{3b}(r_2)$
 are simple, for any $b\in\Nset^3$.
Equation~\eqref{eqn:rhok} becomes
\[
\rho_3(r_2)
 =\frac{2\beta_2}{\beta_5}r_2^2+\frac{2(\alpha_2-\alpha_4)}{\beta_5},
\]
so that $\deg(\kappa_{1\d})=3$, $\deg(\rho_3)=2$, $\deg(\kappa_{3\n})=0$,
 and $\kappa_{1\d}(r_2)$ and $\rho_3(r_2)$ have no common factor as in Section~5.1.1.
Thus, Eq.~\eqref{eqn:prop3a} has the form \eqref{eqn:prop3b}
 with $\ell_a=3$, $\ell_b=2$, $\ell_c=0$ and $b(\xi)/c(\xi)\not\in\Cset$.
Using Propositions~\ref{prop:3a}(iii) and \ref{prop:3b}(iii),
 we obtain Theorem~\ref{thm:main2}(iiic).

\subsubsection{Case of $\alpha_2,
 \beta_2/\beta_5-\alpha_2/\alpha_4,\beta_6/\beta_5-\alpha_3/\alpha_4\neq 0$
 and $\alpha_1\alpha_4=\alpha_2\alpha_3$}
By \eqref{eqn:kdH} we have
\[
\kappa_{1\n}(r_2)=\frac{\beta_2r_2^2+\alpha_2}{\beta_5}\not\equiv 0,\quad
\kappa_{1\d}(r_2)=r_2\left(r_2^2+\frac{\alpha_4}{\beta_5}\right)
\]
and
\begin{align*}
&
\kappa_{3\n}(r_2)=\frac{6\alpha_4}{\beta_5^2}((\beta_3\beta_5-\beta_2\beta_6)r_2^2
+\alpha_1\beta_5-\alpha_2\beta_6-\alpha_3\beta_2+\alpha_4\beta_3)\not\equiv 0,\\
&
\kappa_{3\d}(r_2)=r_2\left(r_2^2+\frac{\alpha_4}{\beta_5}\right)^2
\end{align*}
if $\alpha_1\beta_5-\alpha_2\beta_6-\alpha_3\beta_2+\alpha_4\beta_3\neq 0$; and
\[
\kappa_{3\n}(r_2)=\frac{6\alpha_4}{\beta_5^2}(\beta_3\beta_5-\beta_2\beta_6)r_2\neq 0,\quad
\kappa_{3\d}(r_2)=\left(r_2^2+\frac{\alpha_4}{\beta_5}\right)^2
\]
if $\alpha_1\beta_5-\alpha_2\beta_6-\alpha_3\beta_2+\alpha_4\beta_3=0$.
Note that if $\alpha_1\beta_5-\alpha_2\beta_6-\alpha_3\beta_2+\alpha_4\beta_3\neq 0$, then
\[
\kappa_{3\n}\left(\pm\sqrt{-\frac{\alpha_4}{\beta_5}}\right)
=\frac{6}{\beta_5^3}(\alpha_2\beta_5-\alpha_4\beta_2)(\alpha_4\beta_6-\alpha_3\beta_5)\neq 0
\]
and that if $\beta_3\beta_5-\beta_2\beta_6=0$, then by $\alpha_1\alpha_4-\alpha_2\alpha_3=0$
\[
\alpha_1\beta_5-\alpha_2\beta_6-\alpha_3\beta_2+\alpha_4\beta_3
=\frac{(\alpha_4\beta_2-\alpha_2\beta_5)(\alpha_4\beta_6-\alpha_3\beta_5)}{\alpha_4\beta_5}\neq 0.
\]
We separately discuss the two cases,
 when $\alpha_1\beta_5-\alpha_2\beta_6-\alpha_3\beta_2+\alpha_4\beta_3\neq 0$ or not.

We begin with the first case.
We have $n_1=2$, $\xi_{11}=\sqrt{-\alpha_4/\beta_5}$,
 $\xi_{12}=-\sqrt{-\alpha_4/\beta_5}$, $a_{11},a_{12}=1$ and $n_3=0$,
 and compute \eqref{eqn:bkappa} as
\begin{align*}
\bar{\kappa}_{3b}(r_2)
=\biggl(\left(\frac{2\beta_2}{\beta_5}-(b_1+b_2)\right)r_2^2
 -(b_1-b_2)\sqrt{-\frac{\alpha_4}{\beta_5}}r_2+\frac{2\alpha_2}{\beta_5}\biggr)
 \left(r_2^2+\frac{\alpha_4}{\beta_5}\right),
\end{align*}
where $b=(b_1,b_2)\in\Nset^2$.
The zeros $r_2=\sqrt{-\alpha_4/\beta_5}$ and $-\sqrt{-\alpha_4/\beta_5}$
 of $\bar{\kappa}_{3b}(r_2)$ are simple if and only if
\begin{align}
\alpha_4\beta_5b_1-(\alpha_4\beta_2-\alpha_2\beta_5)\neq 0
\quad\mbox{and}\quad
\alpha_4\beta_5b_2-(\alpha_4\beta_2-\alpha_2\beta_5)\neq 0,
\label{eqn:5b}
\end{align}
respectively.
Hence, if $\beta_2/\beta_5-\alpha_2/\alpha_4=1$,
 then the zeros $r_2=\pm\sqrt{-\alpha_4/\beta_5}$ of $\bar{\kappa}_3(r_2)$ are double,
 so that condition~(ii) of Proposition~\ref{prop:3a} holds.

Suppose that $\beta_2/\beta_5-\alpha_2/\alpha_4\not\in\Zset_{\ge 1}$
 and Eq.~\eqref{eqn:thm2c} holds.
Then both conditions in \eqref{eqn:5b} hold,
 so that the zeros  $r_2=\pm\sqrt{-\alpha_4/\beta_5}$ of $\bar{\kappa}_{3b}(r_2)$
 are simple, for any $b\in\Nset^2$.
Equation~\eqref{eqn:rhok} becomes
\[
\rho_3(r_2)
 =2\left(\frac{\beta_2}{\beta_5}-1\right)r_2^2+\frac{2\alpha_2}{\beta_5},
\]
so that $\deg(\kappa_{1\d})=3$, $\deg(\rho_3),\deg(\kappa_{3\n})=0$ or $2$,
 and $\kappa_{1\d}(r_2)$ and $\rho_3(r_2)$ have no common factor as in Section~5.1.1.
Thus, Eq.~\eqref{eqn:prop3a} has the form \eqref{eqn:prop3b}
 with $\ell_a=3$, $\ell_b,\ell_c=0$ or $2$ and $b(\xi)\c(\xi)\not\in\Cset$ since
\begin{align*}
&
(\beta_2-\beta_5)(\alpha_1\beta_5-\alpha_2\beta_6-\alpha_3\beta_2+\alpha_4\beta_3)-\alpha_2(\beta_3\beta_5-\beta_2\beta_6)\\
&=(\beta_2-\beta_5)(\alpha_1\beta_5-\alpha_3\beta_2+\alpha_4\beta_3)
 -(\beta_3-\beta_6)\alpha_2\beta_5\neq 0
\end{align*}
by \eqref{eqn:thm2c}.
Using Propositions~\ref{prop:3a}(iii) and \ref{prop:3b}(iii),
 we obtain Theorem~\ref{thm:main2}(iva).

We turn to the second case.
We have $n_1=3$, $\xi_{11}=0$, $\xi_{12}=\sqrt{-\alpha_4/\beta_5}$,
 $\xi_{13}=-\sqrt{-\alpha_4/\beta_5}$, $a_{11}=-1$, $a_{12},a_{13}=1$ and $n_3=0$,
 and compute \eqref{eqn:bkappa} as
\begin{align*}
\bar{\kappa}_{3b}(r_2)
=&\biggl(\left(\frac{2\beta_2}{\beta_5}-(b_1+b_2+b_3)+2\right)r_2^2\\
& -(b_2-b_3)\sqrt{-\frac{\alpha_4}{\beta_5}}r_2+\frac{2\alpha_2-\alpha_4(b_1-2)}{\beta_5}\biggr)
 r_2\left(r_2^2+\frac{\alpha_4}{\beta_5}\right),
\end{align*}
where $b=(b_1,b_2,b_3)\in\Nset^3$.
The zero $r_2=0$ (resp. the zeros $r_2=\sqrt{-\alpha_4/\beta_5}$ and $-\sqrt{-\alpha_4/\beta_5}$)
 of $\bar{\kappa}_{3b}(r_2)$
 is (resp. are) simple if and only if
\begin{align}
\alpha_4(b_1-2)-2\alpha_2\neq 0\quad
(&\mbox{resp. $\alpha_4\beta_5 b_2-(\alpha_4\beta_2-\alpha_2\beta_5)$}\notag\\
& \mbox{and $\alpha_4\beta_5 b_3-(\alpha_4\beta_2-\alpha_2\beta_5)\neq 0$}).
\label{eqn:5c}
\end{align}
Hence, if $\alpha_2/\alpha_4=-\tfrac{1}{2}$ (resp. $\beta_2/\beta_5-\alpha_2/\alpha_4=1$),
 then the zero $r_2=0$
 (resp. the zeros $r_2=\pm\sqrt{-\alpha_4/\beta_5}$) of $\bar{\kappa}_3(r_2)$
 is (resp. are) double
 and the zeros $r_2=\pm\sqrt{-\alpha_4/\beta_5}$
 (resp. the zero $r_2=0$) of $\bar{\kappa}_{3b}(r_2)$
 are (resp. is) simple for any $b\in\Nset^3$. 
Thus, condition~(ii) of Proposition~\ref{prop:3a} holds
 if $\alpha_2/\alpha_4=-\tfrac{1}{2}$ or $\beta_2/\beta_5-\alpha_2/\alpha_4=1$.

Suppose that $2\alpha_2/\alpha_4\not\in\Zset_{\ge -1}$
 and $\beta_2/\beta_5-\alpha_2/\alpha_4\not\in\Zset_{\ge 1}$.
Then all conditions in \eqref{eqn:5c} hold,
 so that the zeros  $r_2=0$ and $\pm\sqrt{-\alpha_4/\beta_5}$ of $\bar{\kappa}_{3b}(r_2)$
 are simple, for any $b\in\Nset^3$.
Equation~\eqref{eqn:rhok} becomes
\[
\rho_3(r_2)
 =\left(\frac{2\beta_2}{\beta_5}-1\right)r_2^2+\frac{2\alpha_2+\alpha_4}{\beta_5},
\]
so that $\deg(\kappa_{1\d})=3$, $\deg(\rho_3)=0$ or $2$, $\deg(\kappa_{3\n})=1$,
 and $\kappa_{1\d}(r_2)$ and $\rho_3(r_2)$ have no common factor
 as in Section~5.1.1.
Thus, Eq.~\eqref{eqn:prop3a} has the form \eqref{eqn:prop3b}
 with $\ell_a=3$, $\ell_b=0$ or $2$, $\ell_c=1$ and $b(\xi)/c(\xi)\not\in\Cset$.
Using Propositions~\ref{prop:3a}(iii) and \ref{prop:3b}(iii),
 we obtain Theorem~\ref{thm:main2}(ivb).
 
\subsubsection{Case of $\alpha_1\beta_2,\beta_6/\beta_5-\alpha_3/\alpha_4\neq 0$ and $\alpha_2=0$}
We immediately have $\beta_2/\beta_5\not\in\Qset$.
By \eqref{eqn:kdH} we have
\begin{align*}
&
\kappa_{1\n}(r_2)=\frac{\beta_2r_2}{\beta_5}\not\equiv 0,\quad
\kappa_{1\d}(r_2)=r_2^2+\frac{\alpha_4}{\beta_5},\\
&
\kappa_{3\n}(r_2)
 =\frac{6}{\beta_5^2}((\beta_3\beta_5-\beta_2\beta_6)r_2^4
 +(\alpha_1\beta_5+\alpha_4\beta_3-\alpha_3\beta_2)r_2^2+\alpha_1\alpha_4)\not\equiv 0,\\
&
\kappa_{3\d}(r_2)=r_2^3\left(r_2^2+\frac{\alpha_4}{\beta_5}\right)^2,
\end{align*}
from which $n_1=2$, $\xi_{11}=\sqrt{-\alpha_4/\beta_5}$, $\xi_{12}=-\sqrt{-\alpha_4/\beta_5}$, 
 $a_{11},a_{12}=1$, $n_3=1$, $\xi_{31}=0$ and $a_{31}=3$.
Note that $\kappa_{3\n}(r_2)$ has no zero at $r_2=\pm\sqrt{-\alpha_4/\beta_5}$ since
\[
\kappa_{3\n}\left(\pm\sqrt{-\frac{\alpha_4}{\beta_5}}\right)
=\frac{6\alpha_4\beta_2}{\beta_5^4}(\alpha_3\beta_5-\alpha_4\beta_6)\neq 0.
\]
We compute \eqref{eqn:bkappa} as
\[
\bar{\kappa}_{3b}(r_2)
=\biggl(\left(\frac{2\beta_2}{\beta_5}-(b_1+b_2)\right)r_2
 -(b_1-b_2)\sqrt{-\frac{\alpha_4}{\beta_5}}\biggr)
 \left(r_2^2+\frac{\alpha_4}{\beta_5}\right),
\]
where $b=(b_1,b_2)\in\Nset^2$.
We see by $\beta_2/\beta_5\not\in\Qset$
 that the zeros $r_2=\pm\sqrt{-\alpha_4/\beta_5}$ of $\bar{\kappa}_{3b}(r_2)$
 are simple for any $b\in\Nset^2$.
Equation~\eqref{eqn:rhok} becomes
\[
\rho_3(r_2)
 =2\left(\frac{\beta_2}{\beta_5}-2\right)r_2^2-\frac{2\alpha_4}{\beta_5},
\]
so that $\deg(\kappa_{1\d})=2$, $\deg(\rho_3)=2$
 and $\kappa_{1\d}(r_2)$ and $\rho_3(r_2)$  have no common factor since
\[
\rho_3\left(\pm\sqrt{-\frac{\alpha_4}{\beta_5}}\right)
=\frac{2\alpha_4(\beta_5-\beta_2)}{\beta_5^2}\neq 0.
\]
If $\beta_3\beta_5-\beta_2\beta_6\neq 0$,
 then $\deg(\kappa_{3\n})=4$
 and Eq.~\eqref{eqn:prop3a} has the form \eqref{eqn:prop3b}
 with $\ell_a=3$, $\ell_b=2$, $\ell_c\le 4$ and $b(\xi)/c(\xi)\not\in\Cset$.
On the other hand,
 if $\beta_3\beta_5-\beta_2\beta_6=0$ and $(\alpha_1-\alpha_3)\beta_2+\alpha_4\beta_3\neq 0$,
 then $\deg(\kappa_{3\n})=0$ or $2$,
 and Eq.~\eqref{eqn:prop3a} has the form \eqref{eqn:prop3b}
 with $\ell_a=3$, $\ell_b=2$, $\ell_c=0$ or $2$ and $b(\xi)/c(\xi)\not\in\Cset$ since
\[
\alpha_4(\alpha_1\beta_5+\alpha_4\beta_3-\alpha_3\beta_2)+\alpha_1\alpha_4(\beta_2-\beta_5)
=\alpha_4((\alpha_1-\alpha_3)\beta_2+\alpha_4\beta_3)\neq 0.
\]
Using Propositions~\ref{prop:3a}(iii) and \ref{prop:3b}(iii),
 we obtain Theorem~\ref{thm:main2}(v).

\subsubsection{Case of $\beta_2,\beta_3/\beta_2-\alpha_3/\alpha_4\neq 0$
 and $\alpha_1,\alpha_2=0$}
We immediately have $\beta_2/\beta_5\not\in\Qset$.
By \eqref{eqn:kdH} we have
\[
\kappa_{1\n}(r_2)=\frac{\beta_2r_2}{\beta_5}\not\equiv 0,\quad
\kappa_{1\d}(r_2)=r_2^2+\frac{\alpha_4}{\beta_5}
\]
and
\begin{align*}
&
\kappa_{3\n}(r_2)
 =\frac{6}{\beta_5^2}((\beta_3\beta_5-\beta_2\beta_6)r_2^2
 +\alpha_4\beta_3-\alpha_3\beta_2)\not\equiv 0,\quad
\kappa_{3\d}(r_2)=r_2\left(r_2^2+\frac{\alpha_4}{\beta_5}\right)^2
\end{align*}
if $\beta_6/\beta_5-\alpha_3/\alpha_4\neq 0$; and
\begin{align*}
\kappa_{3\n}(r_2)=\frac{6\alpha_1}{\beta_5}(\beta_3\beta_5-\beta_2\beta_6)\neq 0,\quad
\kappa_{3\d}(r_2)=r_2\left(r_2^2+\frac{\alpha_4}{\beta_5}\right)
\end{align*}
if $\beta_6/\beta_5-\alpha_3/\alpha_4=0$.
Note that if $\beta_6/\beta_5-\alpha_3/\alpha_4=0$, then 
\[
\beta_3\beta_5-\beta_2\beta_6
=\beta_2\beta_5\left(\frac{\beta_3}{\beta_2}-\frac{\alpha_3}{\alpha_4}\right)\neq 0.
\]
Whether $\beta_6/\beta_5-\alpha_3/\alpha_4\neq 0$ or not,
 we have $n_3=1$, $\xi_{31}=0$ and $a_{31}=1$,
 so that condition~(i) of Proposition~\ref{prop:3a} holds.
This yields Theorem~\ref{thm:main2}(vi).

\subsection{Case of $\alpha_4=0$, $\beta_5\neq 0$,
 and $\alpha_2\neq 0$ or $\beta_2/\beta_5\not\in\Qset$}

We next consider the case of $\alpha_4=0$, $\beta_5\neq 0$,
 and $\alpha_2\neq 0$ or $\beta_2/\beta_5\not\in\Qset$.
From \eqref{eqn:kdH} we see that  $\deg(\kappa_{1\d})>\deg(\kappa_{1\n})$, and compute
\[
\Omega(r_2)=\exp\left(\int\frac{\beta_2r_2^2+\alpha_2}{\beta_5r_2^3}\d r_2\right)
=\exp\left(-\frac{\alpha_2}{2\beta_5r_2^2}\right)r_2^{\beta_2/\beta_5},
\]
so that condition~(H1) holds
 since $\alpha_2\neq 0$ or $\beta_2/\beta_5\not\in\Qset$.
We separately consider cases~(b1) and (b2) stated at the beginning of this section
 and check the hypotheses of Proposition~\ref{prop:3a}.

\subsubsection{Case of $\alpha_2\alpha_3\neq 0$}

By \eqref{eqn:kdH} we have
\begin{align*}
&
\kappa_{1\n}(r_2)=\frac{\beta_2r_2^2+\alpha_2}{\beta_5}\not\equiv 0,\quad
\kappa_{1\d}(r_2)=r_2^3,\\
&
\kappa_{3\n}(r_2)
 =\frac{6}{\beta_5^2}((\beta_3\beta_5-\beta_2\beta_6)r_2^4
 +(\alpha_1\beta_5-\alpha_2\beta_6-\alpha_3\beta_2)r_2^2-\alpha_2\alpha_3)\not\equiv 0,\\
&
\kappa_{3\d}(r_2)=r_2^7,
\end{align*}
from which $n_1=1$, $\xi_{11}=0$, $a_{11}=4$ and $n_3=0$.
We compute \eqref{eqn:bkappa} as
\[
\bar{\kappa}_{3b}(r_2)
=\left(\left(\frac{2\beta_2}{\beta_5}-b_1-3\right)r_2^2+\frac{2\alpha_2}{\beta_5}\right)r_2,
\]
where $b=b_1\in\Nset$.
The zero $r_2=0$ of $\bar{\kappa}_{3b}(r_2)$ is simple for any $b\in\Nset$.
Equation~\eqref{eqn:rhok} becomes
\[
\rho_3(r_2)
 =2\left(\frac{\beta_2}{\beta_5}-2\right)r_2^2+\frac{2\alpha_2}{\beta_5},
\]
so that $\deg(\kappa_{1\d})=3$, $\deg(\rho_3)=0$ or $2$,
 and $\kappa_{1\d}(r_2)$ and $\rho_3(r_2)$  have no common factor.
If $\beta_3\beta_5-\beta_2\beta_6\neq 0$, then $\deg(\kappa_{3\n})=4$
 and Eq.~\eqref{eqn:prop3a} has the form \eqref{eqn:prop3b}
 with $\ell_a=3$, $\ell_b\le 2$, $\ell_c=4$ and $b(\xi)/c(\xi)\not\in\Cset$.
On the other hand, if $\beta_3\beta_5-\beta_2\beta_6=0$
 and $(\alpha_1-2\alpha_3)\beta_5-\alpha_2\beta_6\neq 0$, 
 then Eq.~\eqref{eqn:prop3a} has the form \eqref{eqn:prop3b}
 with $\ell_a=3$, $\ell_b,\ell_c\le 2$ and $b(\xi)/c(\xi)\not\in\Cset$ since
\[
\alpha_2(\alpha_1\beta_5-\alpha_2\beta_6-\alpha_3\beta_2)+\alpha_2\alpha_3(\beta_2-2\beta_5)
=\alpha_2((\alpha_1-2\alpha_3)\beta_5-\alpha_2\beta_6)\neq 0.
\]
Using Propositions~\ref{prop:3a}(iii) and ~\ref{prop:3b}(iii),
 we obtain Theorem~\ref{thm:main2}(vii).

\subsubsection{Case of $\alpha_1\alpha_2\neq 0$ and $\alpha_3=0$}
By \eqref{eqn:kdH} we have
\[
\kappa_{1\n}(r_2)=\frac{\beta_2r_2^2+\alpha_2}{\beta_5}\not\equiv 0,\quad
\kappa_{1\d}(r_2)=r_2^3
\]
and
\[
\kappa_{3\n}(r_2)=\frac{6}{\beta_5^2}
 ((\beta_3\beta_5-\beta_2\beta_6)r_2^2+\alpha_1\beta_5-\alpha_2\beta_6)\not\equiv 0,\quad
\kappa_{3\d}(r_2)=r_2^5
\]
if $\beta_6/\beta_5-\alpha_1/\alpha_2\neq 0$; and
\[
\kappa_{3\n}(r_2)=\frac{6}{\beta_5^2}(\beta_3\beta_5-\beta_2\beta_6)\neq 0,\quad
\kappa_{3\d}(r_2)=r_2^3
\]
if $\beta_6/\beta_5-\alpha_1/\alpha_2= 0$ and $\beta_3\beta_5-\beta_2\beta_6\neq 0$.
We separately discuss the two cases,
 when $\beta_6/\beta_5-\alpha_1/\alpha_2\neq 0$;
 and when $\beta_6/\beta_5-\alpha_1/\alpha_2= 0$ and $\beta_3\beta_5-\beta_2\beta_6\neq 0$.

We begin with the first case.
We have $n_1=1$, $\xi_{11}=0$, $a_{11}=2$ and $n_3=0$, and compute \eqref{eqn:bkappa} as
\[
\bar{\kappa}_{3b}(r_2)
=\left(\left(\frac{2\beta_2}{\beta_5}-b_1-1\right)r_2^2+\frac{2\alpha_2}{\beta_5}\right)r_2,
\]
where $b=b_1\in\Nset$.
The zero $r_2=0$ of $\bar{\kappa}_{3b}(r_2)$ is simple for any $b\in\Nset$.
Equation~\eqref{eqn:rhok} becomes
\[
\rho_3(r_2)
 =2\left(\frac{\beta_2}{\beta_5}-1\right)r_2^2+\frac{2\alpha_2}{\beta_5},
\]
so that $\deg(\kappa_{1\d})=3$, $\deg(\rho_3),\deg(\kappa_{3\n})=0$ or $2$
 and $\kappa_{1\d}(r_2)$ and $\rho_3(r_2)$ have no common factor.
If $\alpha_1(\beta_2-\beta_5)-\alpha_2(\beta_3-\beta_6)\neq 0$,
 then Eq.~\eqref{eqn:prop3a} has the form \eqref{eqn:prop3b}
 with $\ell_a=3$, $\ell_b,\ell_c=0$ or $2$ and $b(\xi)/c(\xi)\not\in\Cset$ since
\begin{align*}
&
(\beta_2-\beta_5)(\alpha_1\beta_5-\alpha_2\beta_6)-\alpha_2(\beta_3\beta_5-\beta_2\beta_6)\\
&
=\beta_5(\alpha_1(\beta_2-\beta_5)-\alpha_2(\beta_3-\beta_6))\neq 0.
\end{align*}
Using Propositions~\ref{prop:3a}(iii) and \ref{prop:3b}(iii),
 we obtain Theorem~\ref{thm:main2}(viiia).
 
We turn to the second case and additionally assume that $\beta_2\neq 0$.
We have $n_1,n_3=0$ and
\[
\rho_3(r_2)=\frac{2(\beta_2r_2^3+\alpha_2)}{\beta_5},
\]
so that $\deg(\kappa_{1\d})=3$, $\deg(\rho_3)=2$, $\deg(\kappa_{3\n})=0$,
 and $\kappa_{1\d}(r_2)$ and $\rho_3(r_2)$ have no common factor.
Thus, Eq.~\eqref{eqn:prop3a} has the form \eqref{eqn:prop3b}
 with $\ell_a=3$, $\ell_b=2$, $\ell_c=0$ and $b(\xi)/c(\xi)\not\in\Cset$.
Using Propositions~\ref{prop:3a}(iii) and \ref{prop:3b}(iii),
 we obtain Theorem~\ref{thm:main2}\linebreak(viiib)
 and complete the proof of Theorem~\ref{thm:main2}.
\qed


\section*{Acknowledgements}
This work was partially supported by the JSPS KAKENHI Grant Number JP22H01138.


\appendix
\renewcommand{\theequation}{\Alph{section}.\arabic{equation}}

\section{Proof of Proposition~\ref{prop:3b}}

In this appendix we give a proof of Proposition~\ref{prop:3b}.
The following proposition, on which an algorithm of Rothstein \cite{B05,R76,R77}
 to determine whether given elementary functions have elementary antiderivative are based,
 plays a key role in the proof.

\begin{prop}
\label{prop:a1}
Suppose that $a(\xi),b(\xi)$ have no common factor
 and let $r(\xi),s(\xi)$ be polynomials of $\xi$
 such that $c(\xi)=a(\xi)r(\xi)+b(\xi)s(\xi)$ and either $s(\xi)\equiv 0$ or $\deg(s)<\deg(a)$.
If Eq.~\eqref{eqn:prop3b} has a polynomial solution $z(\xi)$,
 then $\zeta(\xi)=(z(\xi)-s(\xi))/a(\xi)$ is a polynomial solution to
\begin{equation}
a(\xi)\zeta'+(a'(\xi)+b(\xi))\zeta=r(\xi)-s'(\xi).
\label{eqn:propa1}
\end{equation}
\end{prop}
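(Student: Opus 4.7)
The plan is to first establish that $\zeta(\xi)=(z(\xi)-s(\xi))/a(\xi)$ is actually a polynomial, and then to verify by direct substitution that it satisfies \eqref{eqn:propa1}.

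For the first step, I would combine the hypothesis $a(\xi)z'(\xi)+b(\xi)z(\xi)=c(\xi)$ with the decomposition $c(\xi)=a(\xi)r(\xi)+b(\xi)s(\xi)$ to derive the identity
\[
a(\xi)\bigl(z'(\xi)-r(\xi)\bigr)=b(\xi)\bigl(s(\xi)-z(\xi)\bigr).
\]
This shows that $a(\xi)$ divides $b(\xi)(s(\xi)-z(\xi))$ in the polynomial ring $\Cset[\xi]$. Since $a(\xi)$ and $b(\xi)$ are coprime by hypothesis, I would conclude by Gauss's lemma (equivalently, by the uniqueness of factorization in $\Cset[\xi]$) that $a(\xi)\mid z(\xi)-s(\xi)$, so $\zeta(\xi)$ is a polynomial.

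For the second step, I would substitute $z(\xi)=a(\xi)\zeta(\xi)+s(\xi)$ and $z'(\xi)=a'(\xi)\zeta(\xi)+a(\xi)\zeta'(\xi)+s'(\xi)$ back into \eqref{eqn:prop3b}. After collecting terms this produces
\[
a(\xi)^2\zeta'(\xi)+a(\xi)\bigl(a'(\xi)+b(\xi)\bigr)\zeta(\xi)
=c(\xi)-a(\xi)s'(\xi)-b(\xi)s(\xi).
\]
Replacing $c(\xi)$ on the right by $a(\xi)r(\xi)+b(\xi)s(\xi)$ causes the $b(\xi)s(\xi)$ term to cancel, leaving $a(\xi)^2\zeta'(\xi)+a(\xi)(a'(\xi)+b(\xi))\zeta(\xi)=a(\xi)(r(\xi)-s'(\xi))$, and dividing through by $a(\xi)$ gives exactly \eqref{eqn:propa1}.

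I do not anticipate any substantive obstacle: the only place where the hypotheses enter in a nontrivial way is the coprimality step in the polynomiality argument, and everything after that is purely formal algebraic manipulation. The degree bound $\deg(s)<\deg(a)$ (or $s\equiv 0$) plays no explicit role in the derivation itself; it serves only to single out the canonical pair $(r,s)$ among all polynomial decompositions of $c$, ensuring that the construction of $\zeta$ is well defined.
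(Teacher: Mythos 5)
Your proof is correct and is essentially the same argument the paper relies on: the paper does not prove Proposition~\ref{prop:a1} itself but cites Section~6.4 of Bronstein's book, and the single identity it does display---obtained by writing $z(\xi)=a(\xi)\zeta(\xi)+s(\xi)$ and substituting into \eqref{eqn:prop3b}---is exactly your second step, while your coprimality argument showing $a(\xi)\mid z(\xi)-s(\xi)$ is the standard divisibility step that completes it. One minor naming quibble: what you invoke is Euclid's lemma in the UFD $\Cset[\xi]$ rather than Gauss's lemma, but your parenthetical makes the intended (and correct) reasoning clear.
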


See, e.g., Section~6.4 of \cite{B05} for a proof of Proposition~\ref{prop:a1}.
Note that if $a(\xi),b(\xi)$ have no common factor,
 then there exist such polynomials $r(\xi),s(\xi)$ as in the statement of  Proposition~\ref{prop:a1}.
We also write $z(\xi)=a(\xi)\zeta(\xi)+s(\xi)$
 with $\zeta(\xi),s(\xi)\in\Cset[\xi]$ to obtain
\[
a(\xi)(a'(\xi)\zeta(\xi)+a(\xi)\zeta'(\xi)+s'(\xi)+b(\xi)\zeta(\xi))+b(\xi)s(\xi)=c(\xi)
\]
from \eqref{eqn:prop3b}.

\begin{proof}[Proof of Proposition~$\ref{prop:3b}$]
Suppose that the hypotheses of the proposition hold.
We easily see that Eq.~\eqref{eqn:prop3b} has no constant solution.
Let $c_0$ denote the leading coefficient of $c(\xi)$ and let
\begin{equation}
z(\xi)=z_0\xi^\ell+\cdots 
\label{eqn:a1z}
\end{equation}
denote the polynomial solution to \eqref{eqn:prop3b}
 for some $z_0\neq 0$ and $\ell\in\Nset$.
Substituting \eqref{eqn:a1z} into \eqref{eqn:prop3b}, we have
\[
\ell z_0\xi^{\ell_a+\ell-1}+\cdots+b_0z_0\xi^{\ell_b+\ell}+\cdots=c_0\xi^{\ell_c}+\cdots.
\]
Hence, $\ell_c=\max(\ell_a+\ell-1,\ell_b+\ell)$, or $\ell_a+\ell-1=\ell_b+\ell>\ell_c$ and
\[
b_0+\ell=0
\]
for some $\ell\in\Nset$.
Thus, we obtain part~(i).
If $a(\xi),b(\xi)$ have a common factor $d(\xi)$,
 then Eq.~\eqref{eqn:prop3b} is rewritten as
\[
\frac{a(\xi)}{d(\xi)}z'(\xi)+\frac{b(\xi)}{d(\xi)}z(\xi)=\frac{c(\xi)}{d(\xi)},
\]
of which the left hand side is a polynomial.
This means part~(ii).

We turn to the proof of part~(iii).
If $\ell_a>\ell_b+1$,
 then $\deg(z)=\ell_c-\ell_a+1$ and by Proposition~\ref{prop:a1}
 Eq.~\eqref{eqn:propa1} has an $(\ell_c-2\ell_a+1)$th-order polynomial solution,
 so that $\ell_c\ge 2\ell_a-1>\ell_a+\ell_b$.
If $\ell_a<\ell_b+1$,
 then $\deg(z)=\ell_c-\ell_b$ and by Proposition~\ref{prop:a1}
 Eq.~\eqref{eqn:propa1} has an $(\ell_c-\ell_a-\ell_b)$th-order polynomial solution,
 so that $\ell_c\ge\ell_a+\ell_b>2\ell_a-1$.
We complete the proof.
\end{proof}



\begin{thebibliography}{99}

\bibitem{ALMP18}
P. Acosta-Hum\'{a}nez, J.T.~Lazaro, J.J.~Morales-Ruiz and C.~Pantazi,
Differential Galois theory and non-integrability of planar polynomial vector fields, 
{\it J. Differential Equations}, \textbf{264} (2018), 
7183--7212.

\bibitem{AY20}
P.B.~Acosta-Hum\'anez and K.~Yagasaki,
Nonintegrability of the unfoldings of codimension-two bifurcations,
\textit{Nonlinearity}, \textbf{33} (2020), 1366--1387.

\bibitem{A72}
V.I.~Arnold,
Lectures on bifurcations in versal families,
\textit{Russ. Math. Surv.}, \textbf{27} (1972), 54--123.
\bibitem{A89}
V.I.~Arnold,
\textit{Mathematical Methods of Classical Mechanics}, 2nd ed.,
Springer, New York, 1989.

\bibitem{AZ10}
M.~Ayoul and N.T.~Zung,
Galoisian obstructions to non-Hamiltonian integrability,
\textit{C. R. Math. Acad. Sci. Paris}, \textbf{348} (2010), 1323--1326.

\bibitem{B98}
O.I.~Bogoyavlenskij, 
Extended integrability and bi-hamiltonian systems, 
\textit{Comm. Math. Phys.}, \textbf{196} (1998), 19--51.

\bibitem{B05}
M.~Bronstein,
\textit{Symbolic Integration I}, 2nd ed.,
Springer, Berlin, 2005.

\bibitem{B89}
A.D.~Bruno,
\textit{Local Methods in Nonlinear Differential Equations},
Springer, Berlin, 1989.

\bibitem{C12}
O.~Christov,
Non-integrability of first order resonances in Hamiltonian systems in three degrees of freedom,
\textit{Celestial Mech. Dynam. Astronom.}, \textbf{112} (2012), 149--167. 

\bibitem{D84}
J.J.~Duistermaat,
Nonintegrability of the 1:1:2-resonance,
\textit{Ergodic Theory Dynam. Systems}, \textbf{4} (1984), 553--568.

\bibitem{GH83}
J.~Guckenheimer and P.J.~Holmes,
\textit{Nonlinear Oscillations, Dynamical Systems, and Bifurcations of Vector Fields},
Springer, New York, 1983.

\bibitem{HI11}
M.~Haragus and G.~Iooss,
\textit{Local Bifurcations, Center Manifolds, and Normal Forms
 in Infinite-Dimensional Dynamical Systems},
Springer, London, 2011.

\bibitem{K04}
Y.A.~Kuznetsov,
\textit{Elements of Applied Bifurcation Theory}, 3rd ed.,
Springer, New York, 2004.

\bibitem{M99}
J.J.~Morales-Ruiz,
\textit{Differential Galois Theory and Non-Integrability of Hamiltonian Systems},
Birkh\"auser, Basel, 1999.
\bibitem{MR01}
J.J.~Morales-Ruiz and J.-P.~Ramis,
Galoisian obstructions to integrability of Hamiltonian systems,
\textit{Methods, Appl. Anal.}, \textbf{8} (2001), 33--96.
\bibitem{MRS07}
J.J.~Morales-Ruiz, J.-P.~Ramis and C.~Simo,
Integrability of Hamiltonian systems and differential Galois groups of higher variational equations, \textit{Ann. Sci. \'Ecole Norm. Suppl.}, \textbf{40} (2007), 845--884.

\bibitem{R76}
M.~Rothstein,
\textit{Aspects of Symbolic Integration and Simpliﬁcation of Exponential and Primitive Functions},
PhD Thesis, University of Wisconsin-Madison, 1976.
\bibitem{R77}
M.~Rothstein,
A new algorithm for the integration of exponential and logarithmic functions,
in \textit{Proceedings of the 1977 MACSYMA Users Conference, Berkeley CA},
NASA, Washington DC, pp.~263--274.

\bibitem{T74}
F.~Takens,
Singularities of vector fields,
\textit{Inst. Hautes \'Etudes Sci. Publ. Math.}, \textbf{43} (1974), 47--100.

\bibitem{Y18a}
K.~Yagasaki,
Nonintegrability of the unfolding of the fold-Hopf bifurcation,
\textit{Nonlinearity}, \textbf{31} (2018), 341--350.
\bibitem{Y22}
K.~Yagasaki,
Nonintegrability of dynamical systems near degenerate equilibria,
\textit{Comm. Math. Phys.}, \textbf{398} (2023), 1129-1152.

\bibitem{Y18b}
S.~Yamanaka,
Local integrability of Poincar\'e–Dulac normal forms,
\textit{Regul. Chaotic Dyn.}, \textbf{23} (2018), 933--947.
\bibitem{Y19}
S.~Yamanaka,
Nonintegrability of three-degree-of-freedom Birkhoff normal forms of resonance degree two,
\textit{RIMS K\^oky\^uroku}, No.~2137 (2019), 201--212.

\bibitem{Z02}
N.T.~Zung,
Convergence versus integrability in Poincar\'e-Dulac normal forms,
\textit{Math. Res. Lett.}, \textbf{9} (2002), 217--228. 
\bibitem{Z05}
N.T.~Zung,
Convergence versus integrability in Birkhoff normal forms,
\textit{Ann. Math.}, \textbf{161} (2005), 141--156. 

\end{thebibliography}
\end{document}